\documentclass[11pt]{article}
\usepackage{amssymb}
\usepackage{latexsym}
\usepackage{amsfonts}
\usepackage{amsmath}
\newtheorem{thrm}{Theorem}[section]
\newtheorem{lemma}[thrm]{Lemma}
\newtheorem{prop}[thrm]{Proposition}

\newtheorem{remark}[thrm]{Remark}

\newtheorem{ques}{Question}

\numberwithin{equation}{section}
\usepackage[dvips]{color}
\usepackage{mathtools}
\mathtoolsset{showonlyrefs}
\usepackage{mathrsfs}
\usepackage{comment}
\usepackage{hyperref}
\usepackage{xcolor}

\makeatletter
\begin{document}
	\allowdisplaybreaks

	\title{\Large \bf On the maximal displacement of subcritical branching random walks with  stretched exponential tail}
	\author{ \bf   Haojie Hou
	}
	\date{}
	\maketitle
	
	\begin{abstract}
		We study the maximal displacement of a one-dimensional subcritical branching random walk with offspring distribution $\{p_k\}$ and step size $X$ such that $m := \sum_{k=1}^\infty k p_k \in (0,1)$. Let $M_n$ denote the maximal position of all particles alive at time $n$ and let $M := \sup_{n \in \mathbb{N}} M_n$. First, we show that
		\[
		\lim_{x \to +\infty} \frac{e^{\lambda x^b}}{\ell(x) x^a } \, \mathbb{P}(M > x) = \frac{1 - p_0}{1 - m}
		\]
		whenever $\mathbb{P}(X > x) = \ell(x) x^a e^{-\lambda x^b}$ for some slowly varying function $\ell$, $b \in [0,1)$, and under further assumptions on $a$. Next, we prove that
		\[
		\lim_{x \to +\infty} \frac{e^{\lambda x^b+\gamma x}}{\ell(x) x^a } \, \mathbb{P}(M > x) \quad \text{exists and belongs to } (0, \infty)
		\]
		provided that $\sum_{k=1}^\infty k (\log k) p_k < \infty$ and for some $x_*>0$, $\mathbb{P}(X > x) = \int_x^\infty \ell(y) y^a e^{-\lambda y^b - \gamma y} \, \mathrm{d}y$ for all $x > x_*$. Here, $\ell$ is a slowly varying function, $m \mathbb{E}(e^{\gamma X}) < 1$, $b \in [0,1)$, and $a$ satisfies certain conditions.
	\end{abstract}
	
	\medskip
	
	\noindent\textbf{AMS 2010 Mathematics Subject Classification:} 60J80; 60G50; 60G70
	
	\medskip

	\noindent\textbf{Keywords and Phrases}: Branching random walk; maximal displacement; stretched exponential random variables
	
\section{Introduction}

 \subsection{Model introduction and motivation}
 The branching random walk $(Z_n, n\in \mathbb{N}, \mathbb{P})$ is a discrete-time Markov process defined as follows. At time $0$, there is a single particle located at $0 \in \mathbb{R}$. At time $1$, this initial particle dies and produces a set of offspring, whose spatial configuration is an independent copy of a random point process $\mathcal{L}$. At time $2$, each particle existing at time $1$ independently repeats the behavior of its parent and each particle at site $x$ gives birth to an independent copy of $x+\mathcal{L}$. The procedure goes on. 
 
In this paper, our standard assumption is as follows.
\begin{itemize}
	\item[{\bf(H0)}] (First jump then branch) Assume that 	$\mathcal{L}:= N\delta_X$ where  $N $ is independent of $X$ and that the branching random walk is subcritical, i.e.,  $\mathbb{P}(N=k)=p_k$ and $m:= \mathbb{E}(N) =\sum_{k=1}^\infty kp_k \in (0,1)$.
\end{itemize}
Let $M_n$ denote the maximal position among all particles alive at time $n$, with the convention that $M_n := -\infty$ if there is no particle alive at time $n$. Since it is well known that in the subcritical case $m < 1$, the branching random walk dies out almost surely in finite time, the (all-time) maximal displacement 
\begin{align}\label{def-of-M}
	M := \sup_{n \in \mathbb{N}} M_n
\end{align}
is well-defined. One of the main points of interest in the subcritical branching random walk concerns the asymptotic behavior of the tail probability $\mathbb{P}(M > x)$ of $M$ as $x \to \infty$.

The first related work is due to Sawyer and Fleischman \cite{sawyer1979} in the setting of branching Brownian motion (with branching rate $1$). It was proved in \cite{sawyer1979} that, under the third moment condition $\sum_{k=1}^\infty k^3 p_k< \infty$, 
\begin{align}\label{Result-Sawyer}
	\lim_{x \to +\infty} e^{\sqrt{2(1-m)}x} \, \mathbb{P}(M > x)\ \mbox{exists and } \in (0,\infty). 
\end{align}
The main idea in \cite{sawyer1979} is based on a key observation such that the function $x \mapsto \mathbb{P}(M > x)$ solves a class of second-order ODE known as the F-KPP equation. Later, Profeta \cite{profeta24bernoulli} extended the Brownian case to spectrally negative L\'evy processes  and proved a similar result as \eqref{Result-Sawyer}  under third moment condition according to laplace transform method.

Now we introduce the related result in the discrete-time model. As for subcritical branching random walks, there have also been recent works concerning the tail of $M$.  Under condition {\bf(H0)} for  $\sum_{k=1}^\infty k^3p_k<\infty$ and some integer-valued centered random variable $X$ such that  $\mathbb{E}(e^{\theta X}) < \infty$ for all $\theta > 0$, Neuman and Zheng \cite{NZ2017} proved that
\begin{align} 
	0 < \liminf_{n \to \infty} e^{\gamma n} \, \mathbb{P}(M > n) \leq \limsup_{n \to \infty} e^{\gamma n} \, \mathbb{P}(M > n) \leq 1,
\end{align}
where $\gamma > 0$ is the unique solution to the equation 
\begin{align}\label{exists-of-gamma}
	m\mathbb{E}(e^{\gamma X}) = 1.
\end{align}
  Moreover, under some other technical assumptions on $X$ (i.e., $\mathbb{P}(X\leq A)=1$ for some $A<\infty$ and that $X$ is right-continuous), it was shown in Neuman and Zheng \cite{NZ2017} that 
  \begin{align}\label{Result-NZ-2}
  	\lim_{n\to\infty} e^{\gamma n} \, \mathbb{P}(M > n)\ \mbox{exists and } \in (0,1].
  \end{align} 
  After that, there are also some improvements to the result \eqref{Result-NZ-2}. 
  Based on a spine decomposition approach, Fu and Hong \cite{FH2025} proved \eqref{Result-NZ-2} under {\bf(H0)} and the following assumption:
  \begin{itemize}
  	\item[\bf (A1)] $ \sum_{k=1}^\infty k^2 p_k < \infty$; $\mathbb{E}(X) = 0$, $\mathbb{E}(e^{\theta X}) < \infty$ for all $\theta > 0$, and $\mathbb{P}(X \in \mathrm{d} x)$ is either a diffusion measure on $\mathbb{R}$ or a discrete and irreducible measure on $\mathbb{Z}$. 
  \end{itemize}
  Recently Hou and Zhang \cite{HZ2025} used renewal theory approach and proved \eqref{Result-NZ-2} under {\bf(H0)} and  the following assumption:
  \begin{itemize}
  	\item[\bf (A2)] $ \sum_{k=1}^\infty k(\log k)p_k < \infty$; \eqref{exists-of-gamma} and $\mathbb{E}(X e^{\gamma X}) < \infty$ hold. Either $X$ is non-lattice or for some $h>0$, $X$ is lattice with span $h$.
  \end{itemize}
The assumption {\bf (A2)}  only requires slightly stronger assumptions  than  {\bf(H0)} and the existence of $\gamma$ in \eqref{exists-of-gamma} and is weaker than {\bf (A1)}. The main motivation of this paper is based on the following natural question:
\begin{align}\label{Question}
	\emph{What can we say about $\mathbb{P}(M > x)$ if the equation \eqref{exists-of-gamma} does not admit a solution?}
\end{align}
The question \eqref{Question} reflects a transition between two different extremal mechanisms. In the classical Cramér-type regime, the asymptotic behavior of $M$ is driven by an exponential tilting associated with the solution of \eqref{exists-of-gamma}. Once this equation fails to have a solution, or the relevant exponential moment is not strong enough, the extremal behavior is no longer governed by such a tilted random-walk path. 
To the best of our knowledge, most of the literature addressing this question focuses only on the branching (strictly)$\alpha$-stable process for $\alpha \in (0,2)$; see, for example, Hou et al \cite{HJRS25} and Profeta \cite{profeta22alea}. It was proved in \cite{HJRS25, profeta22alea} that if the $\alpha$-stable process admits positive jumps, then under the subcritical assumption $m\in (0,1)$,
\begin{align}\label{Result-Profeta}
	\lim_{x \to +\infty} x^\alpha \, \mathbb{P}(M > x)\ \mbox{exists and } \in (0,\infty).
\end{align}
 The proofs in \cite{HJRS25, profeta22alea} are based on the Laplace transform argument and fine analysis on tail probability of an $\alpha$-stable process indexed by an independent exponential random variable, which are consistent with the strategies used in the case of the spectrally negative L\'evy process \cite{profeta24bernoulli}. The main goal of this paper is to understand the phase transition between \eqref{Result-NZ-2} and \eqref{Result-Profeta} and to partially answer the question \eqref{Question} for a class of jump sizes $X$.

\subsection{Assumptions and main results}
Consider the following two cases:
\begin{itemize}
	\item [{\bf(H1)}] The distribution of $X$ has a stretched exponential tail; that is, there exists a function $\ell$ slowly varying at infinity such that for all $x > 1$, 
	\begin{align}\label{Tail-of-X}
		\mathbb{P}(X > x) = \ell(x) x^a e^{-\lambda x^b},
	\end{align}
	where $a \in \mathbb{R}$, $\lambda > 0$ if $b \in (0,1)$, and $a < 0$, $\lambda = 0$ if $b = 0$. We also assume that $\mathbb{E}(X^2) < \infty$ when $b \in (0,1)$. 
	
	\item [{\bf(H2)}] There exists $\gamma, x_* > 0$ and a function $\ell$ slowly varying at infinity such that $m \mathbb{E}(e^{\gamma X}) < 1$  and that for all $x > x_*$, 
	\begin{align}
		\mathbb{P}(X > x) = \int_x^\infty \ell(y) y^a e^{-\lambda y^b - \gamma y} \, \mathrm{d}y, 
	\end{align}
	where $a \in \mathbb{R}$, $\lambda > 0$ if $b \in (0,1)$, and $a < -2$, $\lambda = 0$ if $b = 0$.
\end{itemize}

We mention here that the assumptions $\mathbb{E}(X^2)<\infty$  for $b\in (0,1)$ in {\bf(H1)}  and  $a<-2$ in {\bf(H2)} are needed for some technical reasons. We will explain these assumptions in suitable place  of the paper when we used them. 
Our main results are as follows.

	\begin{thrm}\label{thm1}
		Assume {\bf(H0)} and {\bf(H1)}. Then 
		\[
		\lim_{x\to+\infty} \frac{e^{\lambda x^b}}{\ell(x)x^a } \mathbb{P}(M>x) =\frac{1-p_0}{1-m}. 
		\]
	\end{thrm}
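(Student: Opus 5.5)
Under (H1) the tail $\bar F(x):=\mathbb{P}(X>x)=\ell(x)x^ae^{-\lambda x^b}$ is subexponential (for $b\in[0,1)$), so the plan is a \emph{single big jump} argument. Heuristically, $M>x$ happens when exactly one particle makes one jump of size about $x$, while the rest of the tree stays $O(1)$. The first generation has $N$ particles. Each particle of every later generation contributes its own jump, and the expected total number of particles ever born, excluding the root, is $\sum_{n\ge1}m^n=m/(1-m)$. That would naively give $\frac{m}{1-m}\bar F(x)$. However, (H0) specifies that the displacement is shared: all $N$ children of one parent sit at the same point $x+X$. So the correct count is the expected number of \emph{families} (parents that have at least one child), each contributing one jump. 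That expected number is $\sum_{n\ge0}m^n(1-p_0)=\frac{1-p_0}{1-m}$, which matches the constant in Theorem \ref{thm1}.

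\textbf{Recursive equation.} Let $u(x)=\mathbb{P}(M>x)$ and $f(s)=\mathbb{E}s^N$. Conditioning on the first step, the root family moves to $X$ and then $N$ independent copies start there, so
\[
u(x)=\mathbb{E}\big[1-(1-u(x-X))^N\big]\quad\text{for }x>0,
\]
with the convention that $u(y)=1$ for $y<0$. Here $M\ge0$ because the root sits at $0$, and $u(x-X)$ is interpreted accordingly. Equivalently, $u(x)=\mathbb{E}[g(u(x-X))]$ with $g(s)=1-f(1-s)$. We have $g(s)\le ms$ and $g(s)=ms-O(s^2)$ when $\mathbb{E}N^2<\infty$; in general $g(s)=ms(1-o(1))$ as $s\to0$. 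We also have $g(1)=1-p_0$. Iterating the inequality $u\le m\,\mathbb{E}u(x-X)+(1-p_0)\bar F(x)$ gives the comparison $u(x)\le \sum_n m^n(1-p_0)\,\mathbb{P}(S_n+X'>x)$ up to boundary terms, where $S_n$ is the random walk.

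\textbf{Upper and lower bounds.} First I will show a crude a priori bound $u(x)\le C\bar F(x)$. This uses Kesten's bound for subexponential tails, $\mathbb{P}(S_n>x)\le C_\epsilon(1+\epsilon)^n\bar F(x)$, which applies since $m(1+\epsilon)<1$. The precise constant then comes from splitting
\[
\mathbb{E}\,g(u(x-X))=\mathbb{E}[g(u(x-X));X>x-K]+\mathbb{E}[g(u(x-X));X\le x-K].
\]
The first term is $\approx(1-p_0)\bar F(x)$ as $K\to\infty$, using $u(y)\to1$ as $y\to-\infty$, and long-tailedness gives $\bar F(x-K)\sim\bar F(x)$. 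The second term is $\approx m\,\mathbb{E}[u(x-X)]$ because $u(x-X)$ is small there. By the a priori bound and subexponential convolution asymptotics, $\mathbb{E}u(x-X)\sim c\bar F(x)$ whenever $u\sim c\bar F$. Writing $c^*=\limsup u/\bar F$ and $c_*=\liminf u/\bar F$, I expect to obtain $c^*\le (1-p_0)+mc^*$ and $c_*\ge(1-p_0)+mc_*$, which forces both to equal $\frac{1-p_0}{1-m}$. The lower bound is easier: restrict to the root family making a jump larger than $x+K$, or to one descendant family doing so. This uses $g(s)\ge ms-\tfrac{1}{2}f''$-type corrections, or simply monotone truncation of $N$.

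\textbf{Main obstacle.} The delicate step is justifying $\mathbb{E}[u(x-X);X\le x-K]\approx \mathbb{E}[u(x-X)]\approx c\,\bar F(x)$ uniformly in $x$. This requires controlling the middle range, where $X$ and $x-X$ are both large. For $b>0$, $\bar F(y)\bar F(x-y)$ is not negligible in a naive way unless one uses the standard Weibull-type split at $y\le x^{1-b}$-scale windows. Here I would use the hypotheses $\mathbb{E}X^2<\infty$ (for $b\in(0,1)$) and the regular variation with $a<0$ (for $b=0$), which is why they are assumed. Concretely, I would prove a lemma: if $h(y)\le C\bar F(y)$ then $\mathbb{E}h(x-X)=\mathbb{E}[h(x-X);|X|\le A(x)]+o(\bar F(x))$. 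Here $A(x)=o(x^{1-b})$ is a scale on which $\bar F(x-y)/\bar F(x)\to1$ uniformly in $|y|\le A(x)$. Combined with dominated convergence via the a priori bound, this closes the argument.
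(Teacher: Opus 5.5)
Your argument is correct, but it follows a genuinely different route from the paper. The paper never closes the one-step equation $u(x)=\mathbb{E}[g(u(x-X))]$ directly. Instead it uses the fully iterated form of Lemma~\ref{lem3}, $u(x)=\frac{1-p_0}{m}\sum_{n}m^n\mathbb{P}(T_x^+=n)-\sum_n m^{n-1}\mathbb{E}[\psi(u(x-S_n));T_x^+>n]$ with $\psi(s)=ms-g(s)$. It therefore needs, for every fixed $n$, the first-passage asymptotics $\mathbb{P}(T_x^+=n)\sim\bar F(x)$, together with a uniform bound $m^{n/2}\mathbb{P}(S_n>x)\lesssim\bar F(x)$ (Proposition~\ref{prop1}). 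For $b\in(0,1)$ these are proved by hand from the Denisov--Dieker--Shneer big-jump theorem and an exponential Chebyshev estimate (Lemma~\ref{lem2}), and both use $\mathbb{E}X^2<\infty$. Dominated convergence then handles the linear term, and the nonlinear term is shown to be $o(\bar F(x))$ via $\psi(s)\le\epsilon s$ near $0$ and the a priori bound.

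Your $\limsup/\liminf$ sandwich, $c^*\le(1-p_0)+mc^*$ and $c_*\ge(1-p_0)+mc_*$, needs only Kesten's bound and one-step convolution estimates. For a whole-line $X$, apply Kesten's bound to $\sum_iX_i^+$. Your route is shorter and works whenever $X^+$ is subexponential; in particular, contrary to your last paragraph, it does not use $\mathbb{E}X^2<\infty$. What the paper's route buys is reusability. In Theorem~\ref{thm2}, after the change of measure, the nonlinear term contributes to the limit (the integral of $e^{\gamma x}\psi(u(x))$ in $C_0$). So no fixed-point sandwich is available there, and the explicit decomposition via $T_x^+$ is what identifies $C_0$.

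A few statements need tightening.

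The lemma in your last paragraph is false if $h$ is of order one on $(-\infty,0)$. For $h=u$, the region $\{X>x\}$ alone contributes $\bar F(x)$. For the same reason, $\mathbb{E}u(x-X)\sim(1+c)\bar F(x)$, not $c\bar F(x)$, when $u\sim c\bar F$. Apply the lemma only to $h=u\mathbf{1}_{[K,\infty)}$, and use it one-sidedly: $u\le(c^*+\epsilon)\bar F$, respectively $u\ge(c_*-\epsilon)\bar F$, on $[K,\infty)$.

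With that restriction, no Weibull-specific windows are needed:
- the window $x-A(x)<X\le x-K$ has mass $\bar F(x-A(x))-\bar F(x-K)=o(\bar F(x))$;
- the region $X<-A(x)$ contributes at most $u(x)\mathbb{P}(X<-A(x))=o(\bar F(x))$;
- the middle range is bounded by $C\,\mathbb{P}(X_1+X_2>x,\,X_1>A(x),\,X_2>A(x))=o(\bar F(x))$, which is the standard characterization of subexponentiality for any $A(x)\to\infty$.

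Finally, $\mathbb{E}[g(u(x-X));X>x-K]=(1-p_0)\bar F(x)(1+o(1))$ already holds for each fixed $K$, because $u\equiv1$ on $(-\infty,0)$ and $\bar F$ is long-tailed. You need $K\to\infty$ only to make $g(s)/s$ close to $m$ on $\{X\le x-K\}$.
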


	\begin{thrm}\label{thm2}
	Assume {\bf(H0)},  {\bf(H2)} and $\sum_{k=1}^\infty k(\log k)p_k<\infty$. Then there exists a constant $C_0\in (0,\infty)$ given as in \eqref{expression-of-C} below such that 
	\[
	\lim_{x\to+\infty} \frac{e^{\lambda x^b+ \gamma x}}{\ell(x) x^a }\mathbb{P}(M>x) =C_0. 
	\]
\end{thrm}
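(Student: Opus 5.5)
The statement to prove is Theorem \ref{thm2}. The plan is to reduce it to a renewal-type equation by exponential tilting by $\gamma$, and to use the subexponential character of the tilted tail to identify the limit.

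\textbf{Step 1: the fixed-point equation and a tilt.} Write $F(x):=\mathbb{P}(M>x)$. Conditioning on the first generation, all $N$ children sit at $X$, so
\[
F(x)=\mathbb{P}(X+\max_{i\le N} M^{(i)}>x,\,N\ge1)\vee 1_{\{x<0\}} .
\]
For $x>0$ (the root at $0$ does not contribute) this gives $F(x)=\mathbb{E}\big[1-(1-F(x-X))^N;N\ge 1\big]$. Expanding,
\[
F(x)=m\,\mathbb{E}[F(x-X)]-R(x),\qquad R(x):=\mathbb{E}\big[\phi(F(x-X))\big],
\]
where $\phi(s)=\sum_k p_k\big((1-s)^k-1+ks\big)\ge 0$ and $\phi(s)=o(s)$ as $s\to0$. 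Set $G(x):=e^{\gamma x}F(x)$ and define the measure $\mu(dy):=m e^{\gamma y}\mathbb{P}(X\in dy)$. Its total mass is $\rho:=m\mathbb{E}(e^{\gamma X})<1$. The equation becomes the defective renewal equation
\[
G=G*\mu - e^{\gamma\cdot}R.
\]
Its solution is
\[
G=\sum_{n\ge0}\mu^{*n}*\big(g_0-e^{\gamma\cdot}R\big),
\]
where $g_0$ collects the boundary terms coming from $x-X<0$, namely $m e^{\gamma x}\mathbb{P}(X>x)$ restricted appropriately. Equivalently, $G=U*(\text{forcing})$ with $U=\sum_n\mu^{*n}$, and $\|U\|=1/(1-\rho)$.

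\textbf{Step 2: the tilted jump law is subexponential.} Under (H2), the normalized measure $\mu/\rho$ has density proportional to $\ell(y)y^a e^{-\lambda y^b}$ for $y>x_*$. Its tail is therefore $\sim \tfrac{m}{\rho}\,\tfrac{\ell(x)x^{a}}{\lambda b x^{b-1}}e^{-\lambda x^b}$ in the stretched case, and regularly varying of index $a+1<-1$ when $b=0$. The density itself, $h(x):=\ell(x)x^ae^{-\lambda x^b}$, is a subexponential density: $h^{*2}(x)\sim 2\|h\|h(x)$ and $h(x-y)/h(x)\to1$ locally uniformly. This is the local version of the Theorem \ref{thm1} analysis. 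The assumption $a<-2$ when $b=0$ is there to make $\int y\,h(y)\,dy<\infty$, which is needed for the local estimates. The forcing term $g_0(x)=m e^{\gamma x}\mathbb{P}(X>x)\sim m\,\ell(x)x^a e^{-\lambda x^b}\,\gamma^{-1}$ is of order $h(x)$. This follows by Laplace's method on $\int_x^\infty$, since $e^{-\gamma(y-x)}$ dominates.

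\textbf{Step 3: asymptotics of $G$.} By local subexponential renewal theory (a Kesten-type bound $\mu^{*n}(x+\Delta)\le C(1+\varepsilon)^n h(x)$ for all $n$, plus dominated convergence with $\rho<1$), one gets
\[
(U*f)(x)\sim h(x)\Big(\frac{\int f}{(1-\rho)^2}\cdot m + \frac{c_f}{1-\rho}\Big)
\]
for a forcing $f$ with $f(x)\sim c_f h(x)$ and $\int f<\infty$. Apply this to $f=g_0-e^{\gamma\cdot}R$. We need $e^{\gamma x}R(x)=O(h(x))$ and integrable. This comes from an a priori bound $F(x)\le C e^{-\gamma x}h(x)$: first derive a crude bound by comparison with the linear equation (since $R\ge0$, $G\le U*g_0$), then note that $\phi(F)\le F\cdot o(1)$. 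To get a nonzero, integrable $\int f$ we need $\int_{-\infty}^\infty e^{\gamma y}R(y)dy<\infty$. For $y<0$ this requires $F\le1$ together with $e^{\gamma y}$, which is fine. The role of $\sum k\log k\,p_k<\infty$ is to guarantee that $\phi(s)/s$ is integrable against $ds/s$ near $0$. This controls $e^{\gamma x}R(x)$ relative to $h$, where $R(x)\approx \mathbb{E}[\phi(F(x-X))]$ with $F(x-X)$ small, as in \cite{HZ2025}. The constant is then
\[
C_0=\lim G/h,
\]
given by the displayed formula. Positivity follows from $G\ge$ the contribution of the first jump alone, $F(x)\ge (1-p_0)\mathbb{P}(X>x)$.

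\textbf{Main obstacle.} The hardest step is Step 3's control of the nonlinear term $e^{\gamma x}R(x)$. One must show that it is $o(h(x))$ or $\sim c\,h(x)$ with a summable total mass, using only $k\log k$ moments. My first attempt would split $R(x)=\mathbb{E}[\phi(F(x-X));X\le x/2]+\mathbb{E}[\cdots;X>x/2]$. On the first piece I would use the a priori bound and $\phi(s)\le s\,\psi(s)$ with $\psi(s)\to0$. On the second piece I would use $\phi\le m s$ and the tail of $X$.
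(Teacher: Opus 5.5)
Your route (a full-line defective renewal equation plus local subexponential renewal theory for the free resolvent $U=\sum_n\mu^{*n}$) differs from the paper's, which uses the first-passage decomposition of Lemma~\ref{lem3} with Propositions~\ref{prop2} and~\ref{prop3}. It can be made to work, but as written it has two genuine gaps.

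First, the identity $G=\sum_{n\ge0}\mu^{*n}*(g_0-e^{\gamma\cdot}R)$ in Step 1 is false as soon as $X$ can be negative. The relation $F(x)=m\,\mathbb{E}F(x-X)-R(x)$ holds only for $x\ge0$: for $x<0$ the left side is $1$, while the right side is at most $1-p_0<1$. Iterating it on the half-line therefore produces the \emph{killed} renewal measure, which only sees paths with $\max_{k\le n}S_k\le x$. This is exactly the structure of Lemma~\ref{lem3} and of the sums in Proposition~\ref{prop3}. With $g_0$ and $R$ supported on $[0,\infty)$, the free resolvent $U$ also counts paths that overshoot $x$ and come back (e.g.\ $X_1\in(x,x+1)$, $X_1+X_2\le x$, $X_3>x-X_1-X_2$). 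These carry mass of order $h(x)$, so Step 3 would return a wrong constant.

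You can repair this without killing. Set $G(x):=e^{\gamma x}$ for $x<0$ and $f:=G-G*\mu$ on all of $\mathbb{R}$. The boundary term $g_0$ is then part of $G*\mu$, and:
\begin{itemize}
\item for $x\ge0$, $f(x)=-e^{\gamma x}R(x)$, with $R(x)=\mathbb{E}\phi(F(x-X))$ computed with $F\equiv1$ on $(-\infty,0)$ (your $\phi$ is the paper's $\psi$);
\item for $x<0$, $f(x)=e^{\gamma x}\bigl(1-m\,\mathbb{E}F(x-X)\bigr)\in[(1-m)e^{\gamma x},e^{\gamma x}]$.
\end{itemize}
Since $F(x)\le\sum_{n\ge1}m^n\mathbb{P}(S_n>x)\le\frac{\rho}{1-\rho}e^{-\gamma x}$, $G$ is bounded, and $\rho<1$ then forces $G=U*f$. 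As a by-product, $G\le U*(e^{\gamma\cdot}1_{(-\infty,0)})\lesssim h$, which is the a priori bound you need.

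Second, the step you call the main obstacle is left open, and your proposed split cannot close it. Step 3 needs an exact limit $e^{\gamma x}R(x)/h(x)\to c_R$, and this term is in general not $o(h)$. The event $\{X>x\}$ alone contributes $\phi(1)e^{\gamma x}\mathbb{P}(X>x)\sim(m-1+p_0)h(x)/\gamma$. Bounding the piece $\{X>x/2\}$ via $\phi(s)\le ms$ gives only $O(h(x))$, not a limit.

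What works is to write $e^{\gamma x}R(x)=m^{-1}(k*\mu)(x)$ with $k(z):=e^{\gamma z}\phi(F(z))$, so $k(z)=\phi(1)e^{\gamma z}$ for $z<0$. On $[0,\infty)$ one has $k=G\,\varphi(F)$ with $\varphi(s)=\phi(s)/s\to0$. The a priori bound then gives $k\in L^1$ and $k(z)=o(h(z))$. The subexponential-density convolution lemma yields $e^{\gamma x}R(x)\sim h(x)\int_{\mathbb{R}}k$, hence $\lim G/h=\frac{m\int f}{(1-\rho)^2}-\frac{\int k}{1-\rho}$. Positivity follows from $F(x)\ge(1-p_0)\mathbb{P}(X>x)$, as you say.

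Nothing in this argument uses $\sum_k k\log k\,p_k<\infty$, so your account of its role is off. In the paper it enters only through the way the nonlinear term is dominated. After discretizing into $[(k-1)\delta,k\delta)$, Proposition~\ref{prop3}(i) bounds the killed renewal mass only by a multiple of $h(x)/h(k\delta)$. Combined with \eqref{e31-2}, this leaves $\sum_k\varphi(u(k\delta))$ (with $u=F$) as the dominating series, and its finiteness (Lemma~\ref{lem4}) is where $k\log k$ is used. Likewise, the paper uses $a<-2$ to make two jumps above $x^{\varepsilon}$ negligible (cf.\ \eqref{e15-2}), not to secure a first moment.

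Once repaired, your argument is a genuinely different and more economical route than the paper's; it needs a Kesten-type bound for the densities of $\mu^{*n}$. It even seems to dispense with the $k\log k$ hypothesis. As written, however, the key representation is false and the limit of the nonlinear term is not established.
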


\begin{remark}
	  For Theorem \ref{thm1}, under {\bf(H1)} with $b = 0$ and $a \in (-2, 0)$, our first result coincides with the case of the $\alpha$-stable process in \eqref{Result-Profeta}. Moreover, Theorem \ref{thm1} is equivalent to
	  \begin{align}\label{equi-thm1}
	  	\lim_{x \to +\infty} \frac{\mathbb{P}(M > x)}{\mathbb{P}(X > x)} = \frac{1 - p_0}{1 - m}.
	  \end{align}
	  For Theorem \ref{thm2}, the assumption $\sum_{k=1}^\infty k (\log k) p_k < \infty$ is used for technical purposes, and the result is equivalent to
	  \begin{align}\label{equi-thm2}
	  	\lim_{x \to +\infty} \frac{\mathbb{P}(M > x)}{\mathbb{P}(X \in \mathrm{d}x) / \mathrm{d}x} = C_0.
	  \end{align}
	  As we will discuss in Section \ref{S1.3} below, we adapt the same idea in the proof of both theorems, even though the results \eqref{equi-thm1} and \eqref{equi-thm2} may appear slightly different.
\end{remark}

We end this section with some questions. In \cite{HZ2025}, Hou and Zhang also considered the tail probability of $M$ for subcritical branching random walks with killing. Thus, our first question is:
\begin{ques} 
	Under assumptions {\bf(H0)} and either {\bf(H1)} or {\bf(H2)}, what is the tail probability of $M$ in subcritical branching random walks with killing? 
\end{ques}
In critical multitype branching random walks, Leh\'ericy \cite[Section 2.4]{Le2025} gave the decay rate of the tail probability for $\mathbb{P}(M > x)$. In our setting, we are also interested in subcritical multitype branching random walks, and our next question is:
\begin{ques}
	What can we say about $\mathbb{P}(M > x)$ in subcritical multitype branching random walks?
\end{ques}
In critical branching L\'evy processes where the spatial motion $(L_t, t \geq 0, \mathbb{P}_x)$ admits a negative drift $\mathbb{E}_0(L_1) < 0$, a similar sufficient exponential moment condition $\mathbb{E}(e^{\omega L_1}) = 1$ for some $\omega >0$ is assumed in Profeta \cite[Theorem 2]{Profeta2026}. We believe that the tail probability of $M$ would be different if $\{\omega>0: \mathbb{E}_0(e^{\omega L_1})<\infty\}\neq \emptyset$ but there is no solution to $\mathbb{E}(e^{\omega L_1}) = 1$. Thus our last question is:
\begin{ques}
	What is the decay rate of the tail probability $\mathbb{P}(M > x)$ in critical branching L\'evy processes when the L\'evy process does not have a sufficient exponential moment?
\end{ques}

 \subsection{Organization of the paper and proof strategies}\label{S1.3}

The organization of the paper is as follows. Define $T_x^+ := \inf\{n: S_n > x\}$. In Section \ref{S2}, we prove the decay rate of $\mathbb{P}(T_x^+ = n)$ under {\bf(H1)} (see Proposition \ref{prop1}). In Section \ref{S3}, under {\bf(H2)} and a suitable change of measure (see \eqref{Change-of-measure} below), we obtain a new random walk $(S_n, \widetilde{\mathbb{P}})$ and prove the decay rate of $\widetilde{\mathbb{E}}(e^{-\gamma S_n} \mathbf{1}_{\{T_x^+ = n\}})$ (see Proposition \ref{prop2}). Moreover, in Section \ref{S3}, we prove a renewal theorem for the random walk $(S_n, \widetilde{\mathbb{P}})$ in Proposition \ref{prop3}. The proofs of Theorems \ref{thm1} and \ref{thm2} are given in Section \ref{S4} and are presented in Sections \ref{S4.1} and \ref{S4.2}, respectively. Section \ref{Appendix} is devoted to the proof of Proposition \ref{prop3} for the case $b \in (0,1)$.

Our proof strategy differs from that used for subcritical branching $\alpha$-stable processes \cite{HJRS25, profeta22alea} and is inspired by the literature on a class of supercritical branching random walks, including branching random walks with heavy tails \cite{BHR2017, BHR2018, Durrett1983, RSZ2024}, branching random walks with stretched exponential tails \cite{DGH2023, Gantert2000}, and branching random walks outside the boundary case \cite{BHM2018, CH2026+}. In the aforementioned literature, a phenomenon known as the \emph{one-big-jump principle} (see \cite{BBY2024, DDS2008} for related results in the context of random walks) appears in these models. Accordingly, we adapt the observations from the supercritical setting to subcritical branching random walks and prove Theorems \ref{thm1} and \ref{thm2}.

We write $f(x) \lesssim g(x)$ to mean that there exists some constant $C$ such that $f(x) \leq C g(x)$ for all sufficiently large $x$. Similarly, $f(x) \lesssim_{\delta, \kappa, \dots} g(x)$ indicates that the constant $C$ may depend on the parameters $\delta, \kappa, \dots$. The notation $f(x) \gtrsim g(x)$ (respectively, $f(x) \gtrsim_{\delta, \kappa, \dots} g(x)$) is equivalent to $g(x) \lesssim f(x)$ (respectively, $g(x) \lesssim_{\delta, \kappa, \dots} f(x)$). We note that the constant $C$ may also depend on the slowly varying function $\ell$ as well as on the constants $m$, $p_k$, $a$, $\lambda$, $\gamma$, and $b$.

    \section{Random walks with stretched exponential  tail}\label{S2}
 	
 	 In this section, we assume that $X$ satisfies {\bf(H1)}. Let $\{X_i, i \in \mathbb{N}\}$ be i.i.d. random variables equal in law to $X$. Define
 	 \begin{align}\label{Def-of-stopping-time}
 	 	S_n := \sum_{i=1}^n X_i \quad \text{and} \quad T_x^+ := \inf\left\{n \in \mathbb{N} : S_n > x\right\}.
 	 \end{align}
 	 The main goal of this section is to prove the following proposition.
 
	\begin{prop}\label{prop1}
		\begin{itemize}
			\item[(i)] For any $x > 1$ and $n \in \mathbb{N}$,
			\[
			m^{n/2} \, \mathbb{P}\left(T_x^+ = n \right) \leq m^{n/2} \, \mathbb{P}\left(S_n > x \right) \lesssim \mathbb{P}(X > x).
			\]
			\item[(ii)] For each $n \in \mathbb{N}$,
			\[
			\lim_{x \to +\infty} \frac{\mathbb{P}(T_x^+ = n)}{\mathbb{P}(X > x)} = 1.
			\]
		\end{itemize}
	\end{prop}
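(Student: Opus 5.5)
The plan is a one-big-jump argument for subexponential-type tails, with the bound in (i) made uniform in $n$. The first inequality in (i) is immediate, since $\{T_x^+=n\}\subset\{S_n>x\}$. For the second inequality, I would prove a Kesten-type bound: for every $\varepsilon>0$ there is $C_\varepsilon$ such that $\mathbb{P}(S_n>x)\le C_\varepsilon(1+\varepsilon)^n\mathbb{P}(X>x)$ for all $x>1$ and $n\in\mathbb{N}$. Choosing $\varepsilon$ with $(1+\varepsilon)m^{1/2}<1$ then gives $m^{n/2}\mathbb{P}(S_n>x)\lesssim\mathbb{P}(X>x)$ uniformly in $n$.

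When $b=0$ ($X$ regularly varying with $a<0$) or $b\in(0,1)$, the law of $X^+$ is subexponential. Kesten's lemma for subexponential distributions gives exactly $\mathbb{P}(X_1^+ + \cdots + X_n^+>x)\le C_\varepsilon(1+\varepsilon)^n\mathbb{P}(X>x)$. Since $S_n\le \sum_{i\le n}X_i^+$, this yields (i). If I may not quote Kesten's lemma, I would reprove it by induction on $n$. Set $\alpha_n:=\sup_{x}\mathbb{P}(S_n^+>x)/\mathbb{P}(X>x)$, where $S_n^+:=\sum_{i\le n}X_i^+$. Split $\mathbb{P}(S_{n+1}^+>x)$ according to whether $X_{n+1}\le x-x_0$ or not. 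On the first event, use the subexponential ratio $\mathbb{P}(X^+ + Y>x)/\mathbb{P}(X>x)\to 2$ for independent copies to get $\alpha_{n+1}\le 1+\alpha_n(1+\varepsilon)$ after fixing $x_0$ large. I expect this uniform-in-$n$ control to be the main technical point. For $b\in(0,1)$ the key input is that $x\mapsto \lambda x^b$ is concave, so that $\mathbb{P}(X>x-y)/\mathbb{P}(X>x)\le e^{\lambda b x^{b-1}y(1+o(1))}$ on $y\le x/2$, together with the finiteness of moments.

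For (ii), fix $n$. I would use the standard subexponential fact that $\mathbb{P}(S_n>x)\sim n\mathbb{P}(X>x)$, valid since $X$ has a subexponential right tail and $S_n$ is a sum of i.i.d. copies. This does not by itself give the asymptotics of $T_x^+=n$. For that I would decompose on which $X_i$ is large. Write $\{T_x^+=n\}=\{S_n>x,\ \max_{k<n}S_k\le x\}$. Given $\delta>0$, the probability that two of the $X_i$, $i\le n$, exceed $\delta x$ is $O(\mathbb{P}(X>\delta x)^2)=o(\mathbb{P}(X>x))$. In the stretched exponential case this needs $2\delta^b>1$, so I would instead use a threshold like $x-K$ versus the remaining mass, via the standard one-big-jump lemma: $\mathbb{P}(S_n>x, \text{no }X_i>x-K)=o(\mathbb{P}(X>x))$ as $K\to\infty$ slowly, which holds for subexponential laws. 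So up to $o(\mathbb{P}(X>x))$, exactly one $X_j$ is larger than $x-K$ and the others sum to $O(K)$ in absolute value, with high probability. Then $S_k\le x$ for $k<j$ holds automatically, while $S_j\approx X_j>x-K$. If $j<n$, the event $\max_{k<n}S_k\le x$ forces $X_j\in(x-K',x+K']$, an interval of probability $o(\mathbb{P}(X>x))$ by long-tailedness. If $j=n$, we need $X_n>x-S_{n-1}$ with $S_{n-1}=O(1)$, and long-tailedness gives probability $\sim\mathbb{P}(X>x)$. Summing, $\mathbb{P}(T_x^+=n)\sim\mathbb{P}(X>x)$, compared with $n\mathbb{P}(X>x)$ for $S_n$.

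The main obstacle is making the "others are $O(1)$" step rigorous with $K\to\infty$ slowly enough. The concern is that for $b$ close to $1$, long-tailedness $\mathbb{P}(X>x-K)\sim\mathbb{P}(X>x)$ requires $K=o(x^{1-b})$. I would handle this by tightness of $S_{n-1}$ for fixed $n$, letting $K\to\infty$ after $x\to\infty$.
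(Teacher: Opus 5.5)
Your proposal is correct. For (i) it takes a genuinely different route from the paper; for (ii) it uses the same one-big-jump idea with simpler thresholds.

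For (i), the paper never uses Kesten's bound. For $b=0$ it splits into $x\le n$, where $m^{n/2}\lesssim \ell(n)n^a$, and $x>n$, where it uses the union bound $\mathbb{P}(S_n>x)\le n\mathbb{P}(X>x/n)$ together with Potter's bound. For $b\in(0,1)$ it treats three ranges of $n$ separately:
\begin{itemize}
\item[(a)] $n<(\log x)^3$, via the uniform asymptotic \eqref{e6} from \cite[Theorem 8.3]{DDS2008};
\item[(b)] $n\ge x^{(1+b)/2}$, via the trivial bound $m^{n/2}\le e^{-2\lambda x^b}$;
\item[(c)] the middle range, via the truncated exponential Chebyshev bound of Lemma \ref{lem2} combined with a one-big-jump split at level $x-2x^{\varepsilon_0}$.
\end{itemize}
You apply Kesten's lemma to the law of $X^+$, using $S_n\le\sum_{i\le n}X_i^+$. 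This replaces all of the above with one classical input and treats $b=0$ and $b\in(0,1)$ uniformly. It also needs no information on the left tail of $X$, in particular not the hypothesis $\mathbb{E}(X^2)<\infty$, which the paper needs for \cite[Theorem 8.3]{DDS2008} and for the $I_3$ estimate in Lemma \ref{lem2}.

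For (ii), you fix the threshold $x-K$ and a bound $L$ on the remaining steps. You let $K,L\to\infty$ only after $x\to\infty$, using long-tailedness, tightness of $S_{n-1}$, and $\mathbb{P}(S_n>x)\sim n\mathbb{P}(X>x)$. The paper instead uses the growing thresholds $x^{\varepsilon}$ (for $b=0$) and $x-2x^{\varepsilon_0}$ (for $b\in(0,1)$). It then kills the ``no big jump'' event by explicit estimates such as \eqref{e10}--\eqref{e10-1}. Keeping $K$ fixed does dispose of the worry about $K=o(x^{1-b})$ that you raise at the end.

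What the paper's longer route buys is a set of quantitative by-products:
\begin{itemize}
\item the uniform asymptotic \eqref{e6} for $n<(\log x)^3$;
\item the bound \eqref{e8-1} on $\mathbb{P}(S_n>x,\max_i X_i\le x-2x^{\varepsilon_0})$ for $n$ up to $x^{(1+b)/2}$;
\item the two-big-jumps bound \eqref{e9-2}.
\end{itemize}
These are reused for the tilted walk $(S_n,\widetilde{\mathbb P})$ in Propositions \ref{prop2} and \ref{prop3}, and Kesten's bound alone does not supply them.

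One small inaccuracy in your sketch of subexponentiality for $b\in(0,1)$. On the whole range $y\le x/2$, concavity only gives $x^b-(x-y)^b\le 2^{1-b}b\,x^{b-1}y$, not $b\,x^{b-1}y(1+o(1))$. This still suffices: it bounds the exponent by $\lambda b\,y^b$, which is integrable against the law of $X$ since $b<1$. In any case, subexponentiality of such Weibull-type tails is classical.
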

	
	We first prove Proposition \ref{prop1} for the case $b = 0$. Before that, we gather some useful results in the following lemma.
	
	\begin{lemma}\label{lem-useful-fact}
		Let $\ell$ be a slowly varying function.
		
		\begin{itemize}
			\item[(i)] For any $0<z_1\leq z_2$, 
			$
			\lim_{x \to +\infty} \sup_{ y\in [z_1, z_2]} \big|  \ell(xy)/\ell(x) - 1\big| =0.
			$
			
			\item[(ii)] For any $A > 1$ and $\delta > 0$, there exists $K(A, \delta)$ such that
			\[
			\frac{\ell(y)}{\ell(x)} \leq A \max\left\{(y/x)^\delta, (y/x)^{-\delta}\right\} \quad \text{for all } x, y \geq K(A, \delta).
			\]
			Consequently, for any $\delta > 0$, we have
			\[
			x^{-\delta} \lesssim_\delta \ell(x) \lesssim_\delta x^{\delta}.
			\]
		\end{itemize}
	\end{lemma}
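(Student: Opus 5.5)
These are the classical Uniform Convergence Theorem and Potter bounds for slowly varying functions. I would derive both from the Karamata representation theorem rather than reprove the uniform convergence from scratch. By Karamata's representation, there exist $x_0>0$, a measurable function $c(x)\to c\in(0,\infty)$ and a measurable function $\varepsilon(t)\to 0$ such that
\[
\ell(x)=c(x)\exp\Big(\int_{x_0}^x \frac{\varepsilon(t)}{t}\,\mathrm{d}t\Big),\qquad x\geq x_0 .
\]
Accordingly, the plan is to write ratios $\ell(y)/\ell(x)$ as a factor $c(y)/c(x)$ times the exponential of an integral of $\varepsilon(t)/t$ between $x$ and $y$.

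For (i), fix $0<z_1\le z_2$ and take $y\in[z_1,z_2]$. Then
\[
\frac{\ell(xy)}{\ell(x)}=\frac{c(xy)}{c(x)}\exp\Big(\int_x^{xy}\frac{\varepsilon(t)}{t}\,\mathrm{d}t\Big).
\]
The exponent is bounded in absolute value by $\sup_{t\ge z_1x}|\varepsilon(t)|\cdot\max\{|\log z_1|,|\log z_2|\}$, which tends to $0$ as $x\to\infty$. The prefactor $c(xy)/c(x)$ tends to $1$ uniformly in $y$, since both arguments are at least $\min\{1,z_1\}x\to\infty$. Uniform convergence of the whole ratio follows.

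For (ii), fix $A>1$ and $\delta>0$. Choose $K$ so large that $|\varepsilon(t)|\le\delta$ for $t\ge K$, and so that $c(t)/c(s)\le A$ for all $s,t\ge K$; this is possible because $c(\cdot)\to c>0$. For $x,y\ge K$ we get
\[
\frac{\ell(y)}{\ell(x)}\le A\exp\Big(\delta\,\Big|\log\frac{y}{x}\Big|\Big)=A\max\big\{(y/x)^\delta,(y/x)^{-\delta}\big\}.
\]
For the consequence, take $x=K$ fixed and let $y$ be large. This gives $\ell(y)\le A\ell(K)(y/K)^\delta\lesssim_\delta y^\delta$. Swapping the roles of $x$ and $y$ gives $\ell(K)\le A\ell(y)(y/K)^\delta$, that is, $\ell(y)\gtrsim_\delta y^{-\delta}$.

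The only real obstacle is the representation theorem itself. If it is not taken as known, it comes from the Uniform Convergence Theorem, whose proof uses an Egorov/Lebesgue-measure argument on $h(u)=\log\ell(e^u)$. I would simply cite it from Bingham--Goldie--Teugels, since the lemma is standard.
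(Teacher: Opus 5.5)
Your proposal is correct. It differs from the paper only in how much is cited. The paper proves the lemma purely by citation: the Uniform Convergence Theorem (BGT Thm.~1.5.2) for (i) and Potter's theorem (BGT Thm.~1.5.6) for (ii). You instead cite one more basic result, the Karamata representation (BGT Thm.~1.3.1), which the paper itself invokes in Section \ref{S3}, and derive both parts from it.

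For (ii), your argument is exactly the standard proof of Potter's bound. Your deduction of $x^{-\delta}\lesssim_\delta \ell(x)\lesssim_\delta x^{\delta}$ by freezing one argument at $K$ is also correct, using $\ell(K)>0$ and the paper's convention that $\lesssim$ means ``for all sufficiently large $x$''.

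For (i), note that in BGT the representation theorem is itself deduced from the Uniform Convergence Theorem. Deriving (i) from the representation is therefore not more elementary than citing (i) directly; it just reverses the usual order of the two results. Your closing remark shows you are aware of this.

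There is one small slip in (i). When $z_1>1$, the integral $\int_x^{xy}\varepsilon(t)t^{-1}\,\mathrm{d}t$ runs over $[x,xy]$, so the supremum of $|\varepsilon(t)|$ must be taken over $t\geq \min\{1,z_1\}x$, not $t\geq z_1x$. You already use $\min\{1,z_1\}x$ for the prefactor $c(xy)/c(x)$, and the conclusion is unaffected.
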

	\textbf{Proof: }  For (i), see \cite[Theorem 1.5.2, p.22]{BGT1987}. (ii) is known as Potter's theorem and can be found in \cite[Theorem 1.5.6, p.25]{BGT1987}

	\hfill$\Box$

\noindent
\textbf{Proof of Proposition \ref{prop1} for $b=0$:}  (i) The first inequality holds trivally, so we only prove the second inequality. 
If $x\leq n$, then according to the inequality $\mathbb{P}(X>n)= \ell(n)n^a\gtrsim m^{n/2}$, we get 
\begin{align}\label{e1}
	m^{n/2} \mathbb{P}\left(S_n>x\right)\leq m^{n/2}\lesssim   \mathbb{P}(X>n)\leq   \mathbb{P}(X>x). 
\end{align}
If $x>n$, then combining the union bound and Lemma \ref{lem-useful-fact} (ii) (with $A=2, \delta=1$),  we have $\ell(x/n)/\ell(x)\lesssim n$ and 
\begin{align} \label{e2}
	& m^{n/2}  \mathbb{P}\left(S_n> x \right)  \leq nm^{n/2}  \mathbb{P}(X>x/n) = nm^{n/2} (x/n)^a \ell(x/n) \nonumber\\
	&= m^{n/2} n^{1-a} \frac{\ell(x/n)}{\ell(x)} \mathbb{P}(X>x) \lesssim m^{n/2} n^{2-a} \mathbb{P}(X>x) \nonumber\\
	&\lesssim \mathbb{P}(X>x) .
\end{align}
Combining \eqref{e1} and \eqref{e2} implies (i).

 (ii) Since $\{T_x^+=1\}= \{X_1>x\}$, the limit holds trivally. We consider the case $n\geq 2$ here. 
 For each $\varepsilon\in (1/2,1)$, define 
 \begin{align}\label{def-of-E-n}
 E_n:= \left\{ \mbox{there exists at most one }i \leq n \mbox{ such that } X_i>x^\varepsilon \right\}.
 \end{align}
 Fixing any $\Gamma>0$ with $\Gamma(2\varepsilon+1)<-(2\varepsilon-1)a$. Since $x^{-\Gamma}\lesssim_\Gamma \ell(x)\lesssim_\Gamma x^\Gamma$ by Lemma \ref{lem-useful-fact} (ii),  
 \begin{align}\label{e4'}
 & \mathbb{P}\left(E_n^c\right) =\mathbb{P}\left(\exists i\neq j \mbox{ such that } X_i, X_j>x^\varepsilon \right)\leq n^2 \mathbb{P}(X>x^\varepsilon)^2 \nonumber\\
 &= n^2\frac{\left(\ell(x^\varepsilon)\right)^2}{\ell(x)} x^{(2\varepsilon-1)a}  \mathbb{P}(X>x) \lesssim_\Gamma n^2\frac{x^{2\varepsilon \Gamma}}{x^{-\Gamma}} x^{(2\varepsilon-1)a}  \mathbb{P}(X>x) \nonumber\\
 & = n^{2} x^{(2\varepsilon+1)\Gamma +(2\varepsilon-1)a} \mathbb{P}(X>x). 
 \end{align}
 Thus,    $\lim_{x\to +\infty} \frac{\mathbb{P}(E_n^c)}{\mathbb{P}(X>x)} =0$. 
 Since for $x$ large enough such that $x/n>x^{\varepsilon}$,  we have
 \[
 \{T_x^+=n\}\subset \{S_n>x\}\subset \{\exists\ 1\leq j\leq n\ s.t.\ X_j>x/n \}\subset \{\exists\ 1\leq j\leq n\ s.t.\ X_j>x^{\varepsilon} \}.
 \]
 Therefore, on $\{T_x^+=n\}\cap E_n$, there exists a unique $j$ such that $X_j>x^\varepsilon$ and that $\max_{\ell\neq j} X_\ell \leq x^{\varepsilon}$, which together with $S_n >x$ implies that $X_j> x-nx^\varepsilon>x^\varepsilon$ for large $x$. In conclusion, we have  
\begin{align}\label{e4}
	& \lim_{x\to +\infty} \frac{\mathbb{P}(T_x^+ =n)}{\mathbb{P}(X>x)}   =\lim_{x\to +\infty} \frac{1}{\mathbb{P}(X>x)}\mathbb{P}\left(T_x^+=n, \exists !\ 1\leq j\leq n\ \mbox{such that } X_j>x^\varepsilon \right) \nonumber\\
	&= \lim_{x\to +\infty} \frac{1}{\mathbb{P}(X>x)} \mathbb{P}\left(T_x^+=n, \exists \ 1\leq j\leq n\ \mbox{such that } X_j>x- nx^\varepsilon, \max_{\ell\neq j} X_\ell \leq x^\varepsilon \right). 
\end{align}
Now we would like to replace $\max_{\ell\neq j} X_\ell \leq x^\varepsilon $ by $\max_{\ell\neq j} |X_\ell| \leq x^\varepsilon $. Noticing that 
\begin{align}\label{e11-3}
	 & \lim_{x\to +\infty} \frac{1}{\mathbb{P}(X>x)}\mathbb{P}\left(\exists \ 1\leq j\leq n\ \mbox{such that } X_j>x- nx^\varepsilon, \max_{\ell\neq j} |X_\ell| > x^\varepsilon \right) \nonumber\\
	 &\leq \lim_{x\to +\infty} \frac{n}{\mathbb{P}(X>x)} \mathbb{P}\left(X_1>x- nx^\varepsilon, \max_{\ell\neq 1} |X_\ell| > x^\varepsilon \right)\nonumber\\
	 &=  \lim_{x\to +\infty} \frac{n}{\mathbb{P}(X>x)}\mathbb{P}\left(X>x- nx^\varepsilon\right)\mathbb{P}\left(  |X_\ell| > x^\varepsilon \right)^{n-1} = n \lim_{x\to +\infty} \mathbb{P}\left(  |X_\ell| > x^\varepsilon \right)^{n-1}  =0,
\end{align}
where the second equality follows from  Lemma \ref{lem-useful-fact} (i). Plugging this back to \eqref{e4} yields that 
\begin{align}\label{e5}
	& \lim_{x\to +\infty} \frac{\mathbb{P}(T_x^+ =n)}{\mathbb{P}(X>x)} \nonumber\\
	&  = \lim_{x\to +\infty} \frac{1}{\mathbb{P}(X>x)} \mathbb{P}\left(T_x^+=n, \exists \ 1\leq j\leq n\ \mbox{such that } X_j>x- nx^\varepsilon, \max_{\ell\neq j} |X_\ell | \leq x^\varepsilon \right). 
\end{align}
If the unique $j$ with $X_j>x-nx^\varepsilon$ and $\max_{\ell\neq j} |X_\ell|\leq x^\varepsilon$ satisfies $j\leq n-1$, then on $T_x^+=n$, it must hold that $X_j = S_{n-1}- \sum_{\ell\neq j ,\ell\leq n-1} X_\ell \leq x+ (n-2)x^\varepsilon\leq x+nx^\varepsilon $.  Therefore, 
\begin{align}\label{e5'}
	& \limsup_{x\to +\infty} \frac{1}{\mathbb{P}(X>x)}\mathbb{P}\left(T_x^+=n, \exists \ 1\leq j\leq n-1\ \mbox{such that } X_j>x- nx^\varepsilon, \max_{\ell\neq j} |X_\ell | \leq x^\varepsilon \right) \nonumber\\
	& \leq \limsup_{x\to +\infty} \frac{1}{\mathbb{P}(X>x)} \mathbb{P}\left( \exists 1\leq j\leq n-1\ \mbox{such that }  |X_j-x|\leq nx^\varepsilon\right) \nonumber\\
	&\leq n  \limsup_{x\to +\infty} \frac{1}{\mathbb{P}(X>x)}\mathbb{P}\left(   |X-x|\leq nx^\varepsilon\right) \nonumber\\
	&= n  \lim_{x\to +\infty} \frac{\ell(x-nx^\varepsilon)(x-nx^\varepsilon)^a-\ell(x+nx^\varepsilon)(x+nx^\varepsilon)^a}{\ell(x)x^a}=0,
\end{align}
where in the last equality we used  Lemma \ref{lem-useful-fact} (i). Now plugging this back to \eqref{e5} and noticing that 
$\{X_n>x- nx^\varepsilon, \max_{\ell \leq n-1} |X_\ell | \leq x^\varepsilon \}\subset  \{ \max_{1\leq i\leq n-1} S_i\leq x\}$ and $\{X_n>x+ nx^\varepsilon, \max_{1\leq \ell \leq n-1} |X_\ell | \leq x^\varepsilon \}\subset  \{ S_n > x\}$  for large $x$, by Lemma \ref{lem-useful-fact} (i), we conclude that 
\begin{align}
	&1= \lim_{x\to +\infty} \frac{1}{\mathbb{P}(X>x)} \mathbb{P}\left(X>x+ nx^\varepsilon\right) \mathbb{P}\left( |X| \leq x^\varepsilon \right)^{n-1}  \nonumber\\
	&= \lim_{x\to +\infty} \frac{1}{\mathbb{P}(X>x)} \mathbb{P}\left(X_n>x+ nx^\varepsilon, \max_{1\leq \ell \leq n-1} |X_\ell | \leq x^\varepsilon \right) \nonumber\\
	&\leq  \lim_{x\to +\infty} \frac{\mathbb{P}(T_x^+ =n)}{\mathbb{P}(X>x)}   = \lim_{x\to +\infty} \frac{1}{\mathbb{P}(X>x)}\mathbb{P}\left(S_n >x, X_n>x- nx^\varepsilon, \max_{1\leq \ell \leq n-1} |X_\ell | \leq x^\varepsilon \right) \nonumber\\
	& \leq  \lim_{x\to +\infty} \frac{\mathbb{P}\left( X_n>x- nx^\varepsilon \right)}{\mathbb{P}(X>x)} =1,
\end{align}
which implies the desired result for $b=0$.

\hfill$\Box$

Now we prove Proposition \ref{prop1} for $b \in (0,1)$. The proof idea is quite similar to the case $b = 0$, but requires more delicate estimates. Observe that if $g(x)$ is a positive function such that $\lim_{x \to +\infty} \frac{g(x)}{x^{1-b}} = 0$, then by Taylor's expansion, uniformly for all $|y| \leq g(x)$, we have $\lim_{x \to +\infty} ((x+y)^b - x^b) = 0$. Therefore, together with Lemma \ref{lem-useful-fact}(i), we obtain that uniformly for all $|y| \leq g(x)$,
\begin{align}\label{proper-of-tail}
	\lim_{x \to +\infty} \frac{\mathbb{P}(X > x + y)}{\mathbb{P}(X > x)} = \lim_{x \to +\infty} \frac{\ell(x + y) (x + y)^a}{\ell(x) x^a} e^{-\lambda ((x+y)^b - x^b)} = 1.
\end{align}
Set $\mu := \mathbb{E}(X)$, $\sigma := \sqrt{\operatorname{Var}(X)}$, and $\widehat{X} := \frac{X - \mu}{\sigma}$. Then, under {\bf(H1)}, 
\[
\mathbb{P}(\widehat{X} > x) = \mathbb{P}(X > \sigma x + \mu) = \ell(\sigma x + \mu) (\sigma x + \mu)^a e^{-\lambda (\sigma x + \mu)^b} =: \widehat{\ell}(x) x^a e^{-\lambda \sigma^b x^b}
\]
for some slowly varying function $\widehat{\ell}$ and for all $x > (1 - \mu)/\sigma$. Since we assumed $\mathbb{E}(X^2) < \infty$ in {\bf(H1)}, by \cite[Theorem 8.3]{DDS2008} (with $R(x) = \lambda \sigma^b x^b$), uniformly for all $n < \frac{1}{2} (\log z)^3$,
\[
\lim_{z \to +\infty} \sup_{n<\frac{1}{2}(\log z)^3} \left| \frac{\mathbb{P}(S_n > n\mu + \sigma z)}{n \mathbb{P}(X > \sigma z + \mu)} -1\right|=0.
\]
Replacing $n\mu + \sigma z$ by $x$ and noting that $n < (\log x)^3$ whenever $n < \frac{1}{2} (\log z)^3$ for sufficiently large $x$, it follows from \eqref{proper-of-tail} that
\begin{align}\label{e6}
	\lim_{x \to +\infty} \sup_{n < (\log x)^3} \left| \frac{\mathbb{P}(S_n > x)}{n \mathbb{P}(X > x)} - 1 \right| = 0.
\end{align}
 
\begin{lemma}\label{lem2}
	For $x>1$, define $\varepsilon_0:= \frac{1}{2}\min\{ b, 1-b\}$ and
	\[
	\theta(x):= \lambda x^{b-1}-\frac{(\log x)^2}{x}.
	\]
	 Then there exists a constant $C_*>0$ such that for large $x$,
	 \[
	 \mathbb{E}\left(e^{ \theta(x) \min\{ X-\mu, x-x^{\varepsilon_0} \} }\right)  \leq \exp\left\{C_* \theta^2(x)\right\}.
	 \]
\end{lemma}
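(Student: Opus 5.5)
Write $\theta=\theta(x)$ and $Y:=\min\{X-\mu,\,x-x^{\varepsilon_0}\}$, so that $\mathbb{E}(Y)\le \mathbb{E}(X-\mu)=0$ and $\theta\to0$ with $\theta>0$ for large $x$. The plan is to use the elementary bound $e^{u}\le 1+u+\tfrac{u^2}{2}e^{u^+}$. This gives
\[
\mathbb{E}\left(e^{\theta Y}\right)\le 1+\theta\,\mathbb{E}(Y)+\frac{\theta^2}{2}\mathbb{E}\left(Y^2e^{\theta Y^+}\right)\le 1+\frac{\theta^2}{2}\mathbb{E}\left(Y^2e^{\theta Y^+}\right).
\]
Since $1+u\le e^u$, it therefore suffices to show that $\mathbb{E}(Y^2e^{\theta Y^+})$ is bounded by a constant independent of $x$ (for large $x$); we then take $C_*$ to be half that constant. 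We split according to the size of $Y^+$.

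\emph{Region $Y\le x^{1-b}$ (or $Y\le 0$).} Here $\theta Y^+\le \lambda x^{b-1}x^{1-b}=\lambda$, so the contribution is at most $e^{\lambda}\mathbb{E}((X-\mu)^2)<\infty$. This uses $\mathbb{E}(X^2)<\infty$, which is assumed in {\bf(H1)}. Note that $|Y|\le |X-\mu|$ on $\{Y<0\}$, and $Y^2\le (X-\mu)^2$ throughout.

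\emph{Region $x^{1-b}<Y\le x-x^{\varepsilon_0}$.} This is the main obstacle, and the reason for both the truncation and the correction $(\log x)^2/x$ in $\theta$. Here $Y=X-\mu$ or $Y$ is at the cap, so roughly $X\in(x^{1-b},x]$. We bound the contribution by integrating against the tail: writing $y=X-\mu$,
\[
\mathbb{E}\left(Y^2e^{\theta Y};\,Y>x^{1-b}\right)\lesssim \int_{x^{1-b}}^{x} y^2 e^{\theta y}\,\mathbb{P}(X>y)\,\mathrm{d}y\cdot\Big(\theta+\frac{2}{y}\Big)+(\text{boundary terms}).
\]
We handle the terms as follows.
\begin{itemize}
\item The atom at the cap $x-x^{\varepsilon_0}$ carries mass $\mathbb{P}(X>x-x^{\varepsilon_0}+\mu)$ and is dominated the same way as the integrand.
\item The integrand is $\ell(y)y^{a+2}\exp\{\phi(y)\}$ with $\phi(y):=\theta y-\lambda y^b$. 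Since $\phi$ is convex in $y$, on $[x^{1-b},x]$ its maximum is attained at an endpoint.
\item At $y=x$ we get $\phi(x)=\lambda x^b-(\log x)^2-\lambda x^b=-(\log x)^2$. Thus $e^{\phi}$ decays faster than any power, which kills $y^{a+3}\ell(y)$ (Lemma \ref{lem-useful-fact}(ii)); the same holds at the cap point $x-x^{\varepsilon_0}$, where $\phi\le -(\log x)^2+O(1)$ because $\lambda x^{b-1}x^{\varepsilon_0}\to0$, since $\varepsilon_0<1-b$.
\item At $y=x^{1-b}$ we get $\phi\le\lambda-\lambda x^{b(1-b)}$, which is again superpolynomially small.
\end{itemize}
Bounding the integral by the length $x$ times $\max e^{\phi}$ times polynomial factors then gives $o(1)$.

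For a cleaner statement of the middle step, one can observe that $\lambda y^b-\theta y$ is concave, so on $[x^{1-b},x]$ it is at least the minimum of its endpoint values. That minimum is $\ge \min\{\lambda x^{b(1-b)}-\lambda,\ (\log x)^2\}\gg \log x$, uniformly in the range. This concavity/endpoint comparison is the key point, and it is where the choice of the $(\log x)^2/x$ correction in $\theta(x)$ is used. Summing the two regions yields a constant bound on $\mathbb{E}(Y^2e^{\theta Y^+})$, which proves the claim.
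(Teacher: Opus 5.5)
Your proof is correct, but you organize the argument differently from the paper. The paper writes $\mathbb{E}(e^{\theta Y})-1=I_1+I_2+I_3$ according to three ranges: the bulk $X-\mu\le\theta^{-1}$, the middle range $\theta^{-1}<X-\mu<x_0$, and the cap atom $X-\mu\ge x_0$. It bounds each piece by $O(\theta^2)$ separately. On the bulk it uses a second-order Taylor expansion together with $\mathbb{E}(X-\mu)=0$. On the middle range it has to manufacture the factor $\theta^2$ by hand, via $1\le 4\theta^2y^2$ for $y\ge\theta^{-1}/2$. It also splits that range at $x/2$, using $\theta y-\lambda y^b\le-\lambda(1-2^{b-1})y^b$ below $x/2$ and $\theta y-\lambda y^b\le-(\log x)^2/2$ above.

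You instead apply $e^u\le 1+u+\frac{u^2}{2}e^{u^+}$ to all of $Y$ at once. This factors out $\theta^2$ globally, so the tail region only needs an $O(1)$ bound (in fact $o(1)$), and you never have to compare error terms with $\theta^2\asymp x^{2(b-1)}$. The convexity of $\phi(y)=\theta y-\lambda y^b$ then replaces the two-case split at $x/2$ with a single endpoint comparison on $[x^{1-b},x]$. Your route is shorter and gives an explicit admissible constant: any $C_*>\frac12 e^{\lambda}\operatorname{Var}(X)$ works for large $x$. The paper's route keeps each piece elementary and applies Taylor's formula only where $\theta(X-\mu)\le 1$.

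The underlying ingredients are the same in both arguments. One is $\mathbb{E}(X^2)<\infty$ on the bulk. The other is the superpolynomial smallness of $e^{\phi}$ near $y\approx x$ and at the cap, which is exactly where the $(\log x)^2/x$ correction in $\theta(x)$ is used.

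Two cosmetic fixes. In your integration-by-parts display, the factor $\theta+2/y$ belongs inside the integral. The tail there should be $\mathbb{P}(X-\mu>y)$ rather than $\mathbb{P}(X>y)$; the shift is harmless because $(y+\mu)^b-y^b\to0$.
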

\textbf{Proof: }  Set $x_0:=x-x^{\varepsilon_0}$ for simplicity. Then 
\begin{align}
	 & \mathbb{E}\left( e^{ \theta(x) \min\{ X-\mu, x_0 \} }\right) -1 = e^{\theta(x)x_0}\mathbb{P}(X\geq x_0+\mu)+\mathbb{E}\left(e^{\theta(x)(X-\mu)}1_{\{X-\mu <x_0\}}\right)-1\nonumber\\
	 & = e^{\theta(x)x_0}\mathbb{P}(X\geq x_0+\mu)+ \mathbb{E}\left(e^{\theta(x)(X-\mu)}1_{\{(\theta(x))^{-1}< X-\mu <x_0\}}\right)\nonumber\\
	 &\quad +\left(\mathbb{E}\left(e^{\theta(x)(X-\mu)}1_{\{X-\mu \leq \theta(x)^{-1}\}}\right)-1\right)\nonumber\\
	 &=:  I_1+I_2+I_3.
\end{align}

\noindent
{\bf Estimate for $I_1$.} From $\varepsilon_0< 1-b$ we have  $\lim_{x\to+\infty} (x_0^b - x^b )=  0$. Therefore, 
 combining $\ell(x)\lesssim x$ and $\lim_{x\to+\infty} \theta(x)(x-x_0)=  0$, it holds that 
\begin{align}\label{bound-for-I1}
	&I_1 = e^{\theta(x)x_0}\ell(x_0+\mu) |x_0+\mu|^a e^{-\lambda (x_0+\mu)^b}\lesssim e^{\theta(x)x} x^{|a|+1}e^{-\lambda x^b}\nonumber\\
	& = e^{-(\log x)^2} x^{|a|+1}\lesssim \theta^2(x),
\end{align}
where in the last inequality we used the fact that $\theta(x)\gtrsim x^{b-1}$.

\noindent
{\bf Estimate for $I_2$.} Combining $\ell(x)\lesssim x$ and $\lceil x_0 \rceil \leq \lfloor x\rfloor$, we get
\begin{align}
	& I_2 \leq \sum_{k= \lfloor (\theta(x))^{-1}\rfloor }^{\lceil x_0\rceil} e^{\theta(x)(k+1)} \mathbb{P}\left( X-\mu\in (k,k+1]\right)\leq \sum_{k= \lfloor (\theta(x))^{-1}\rfloor}^{\lceil x_0\rceil} e^{\theta(x)(k+1)} \mathbb{P}\left( X-\mu >k\right)\nonumber\\
	&\lesssim \sum_{k= \lfloor (\theta(x))^{-1}\rfloor}^{ \lfloor x\rfloor} e^{\theta(x)k} k^{a+1} e^{-\lambda k^b} \lesssim \int_{(\theta(x))^{-1}/2}^{x} e^{\theta(x) y} y^{a+1}e^{-\lambda y^b} \mathrm{d}y. 
\end{align}
Noticing that $\theta (x)y -\lambda y^b\leq  \lambda x^{b-1}y -\lambda y^b \leq -\lambda y^b(1-2^{b-1})$ for $y<x/2$ and 
\begin{align}
\theta (x)y -\lambda y^b & =  \lambda x^{b-1}y^b(y^{1-b}-x^{1-b})  - (\log x)^2 y/x  \leq - (\log x)^2 /2
\end{align}
for $x/2\leq y<x$,   we conclude that 
\begin{align}\label{bound-for-I2}
	& I_2 \lesssim \int_{(\theta(x))^{-1}/2}^{x/2}  y^{a+1}e^{-\lambda y^b(1-2^{b-1})} \mathrm{d}y  +\int_{x/2}^{x}  y^{a+1}e^{-(\log x)^2/2 } \mathrm{d}y\nonumber\\
	&\lesssim \theta^2(x)\int_{(\theta(x))^{-1}/2}^{\infty}  y^{a+3}e^{-\lambda y^b(1-2^{b-1})} \mathrm{d}y  + x^{a+2}e^{-(\log x)^2/2 } \lesssim \theta^2(x).
\end{align}

\noindent
{\bf Estimate for $I_3$.} Combining inequalities 
\[
\mathbb{E}\left(\theta(x)(X-\mu)1_{\{X-\mu\leq \theta(x)^{-1}\}}\right)=-\mathbb{E}\left(\theta(x)(X-\mu)1_{\{X-\mu> \theta(x)^{-1}\}}\right) <0
\]
and  $|e^{x}-1-x|\lesssim x^2$ for $x\leq 1$, we get 
\begin{align}\label{bound-for-I3}
	& I_3 = \mathbb{E}\left(\left(e^{\theta(x)(X-\mu)} -1-\theta(x)(X-\mu)\right)1_{\{X-\mu \leq \theta(x)^{-1}\}}\right)-\mathbb{P}(\theta(x)(X-\mu)>1)\nonumber\\
	&\qquad + \mathbb{E}\left(\theta(x)(X-\mu)1_{\{X-\mu\leq \theta(x)^{-1}\}}\right)\nonumber\\
	&\leq \mathbb{E}\left(\left|e^{\theta(x)(X-\mu)} -1-\theta(x)(X-\mu)\right|1_{\{X-\mu \leq \theta(x)^{-1}\}}\right)\nonumber\\
	&\lesssim \theta^2(x) .
\end{align}
Thus, combining \eqref{bound-for-I1}, \eqref{bound-for-I2} and \eqref{bound-for-I3} implies the desired result.

\hfill$\Box$

\noindent
\textbf{Proof of Proposition \ref{prop1} for $b\in (0,1)$:}
Let $\varepsilon_0$ be given as in Lemma \ref{lem2}. 

(i) We divide the proof into three cases.

\noindent
{\bf Case 1: $n<(\log x)^3$.}
 By \eqref{e6}, we have
  \begin{align}\label{e7}
		m^{n/2}\mathbb{P}(T_x^+=n)\leq m^{n/2} \mathbb{P}(S_n>x) \lesssim n m^{n/2} \mathbb{P}(X>x)\lesssim   \mathbb{P}(X>x).
\end{align}

\noindent
{\bf Case 2: $n\geq x^{(1+b)/2}$.}
Noticing that in this case, $n\log (1/m) /2 >2\lambda x^b$ when $x$ is large enough. Therefore, it follows from $\mathbb{P}(X>x)\gtrsim e^{-2\lambda x^b}$  that 
\begin{align}\label{e8}
	\mathbb{P}(X>x)\gtrsim e^{-2\lambda x^b} \geq m^{n/2} \quad  \Longrightarrow \quad m^{n/2}\mathbb{P}(T_x^+=n)\leq m^{n/2} \lesssim  	\mathbb{P}(X>x).
\end{align}

\noindent
{\bf Case 3: $(\log x)^3\leq n <x^{(1+b)/2}$.}
Let $x$ be large enough such that $x^{\varepsilon_0}>\mu$,  then by Markov inequality, 
\begin{align}\label{e33-2}
	&\mathbb{P}\left(S_n >x, \max_{1\leq i\leq n} X_i \leq x-2x^{\varepsilon_0} \right)  \leq \mathbb{P}\left(S_n-n\mu >x-n\mu, \max_{1\leq i\leq n} (X_i-\mu) \leq x-x^{\varepsilon_0} \right) \nonumber\\
	& \leq  \mathbb{P}\left(\sum_{i=1}^n \min\{ X_i-\mu, x-x^{\varepsilon_0}\}>x-n\mu \right)\nonumber\\
	& \leq e^{-\theta(x)(x-n\mu)}\left(\mathbb{E}\left( e^{\theta(x) \min\{ X-\mu, x-x^{\varepsilon_0}\} } \right)\right)^n.
\end{align}
Now it follows from Lemma \ref{lem2} that 
\begin{align}\label{e8-1}
	& \mathbb{P}\left(S_n >x, \max_{1\leq i\leq n} X_i \leq x-2x^{\varepsilon_0} \right) \leq e^{-\theta(x)(x-n\mu)} e^{C_*\theta^2(x)n}\nonumber\\
	&=  e^{-\lambda x^{b} +(\log x)^2+\left(C_*\theta^2(x)+ \mu \theta(x) \right)n}.
\end{align}
Since $\ell(x)x^a \gtrsim x^{-|a|-1}$ and  $\log x<n^{1/3}$, the above probability is bounded by
\begin{align}
	& \mathbb{P}\left(S_n >x, \max_{1\leq i\leq n} X_i \leq x-2x^{\varepsilon_0} \right)  \lesssim \mathbb{P}(X>x) e^{ (|a|+1)\log x +(\log x)^2+\left(C_*\theta^2(x)+ \mu \theta(x) \right)n}\nonumber\\
	& \leq \mathbb{P}(X>x) e^{ \left((|a|+1)n^{-2/3} +n^{-1/3}+C_*\theta^2(x)+ \mu \theta(x) \right)n}. 
\end{align}
Therefore, taking $x$ sufficiently large such that 
$(|a|+1)n^{-2/3} +n^{-1/3}+C_*\theta^2(x) + \mu \theta(x) \leq -\log m/2$ for all $n> (\log x)^3$, we conclude that 
\begin{align}\label{e9-1}
		m^{n/2}\mathbb{P}\left(S_n >x, \max_{1\leq i\leq n} X_i \leq x-2x^{\varepsilon_0} \right)&  \lesssim m^{n/2} \mathbb{P}(X>x) e^{-n\log m/2} =  \mathbb{P}(X>x) .
\end{align}
Next, it follows from \eqref{proper-of-tail} and  $\varepsilon_0<1-b$ that
\begin{align}\label{e9-2}
	& m^{n/2} \mathbb{P}\left(S_n >x,  \exists\  1\leq  i\neq j\leq n, X_i, X_j > x-2x^{\varepsilon_0} \right) \nonumber\\
	& \leq m^{n/2} n^2 \left(\mathbb{P}(X> x-2x^{\varepsilon_0})\right)^2 \lesssim   \left(\mathbb{P}(X> x)\right)^2 \leq  \mathbb{P}(X> x).
\end{align}
Combining \eqref{e9-1} and \eqref{e9-2}, we get that when $x$ is large enough, for $(\log x)^3\leq n< x^{(1+b)/2}$,
\begin{align}\label{e9}
	& m^{n/2}\mathbb{P}(T_x^+=n) \leq m^{n/2} \mathbb{P}(S_n>x) \nonumber\\
	&\lesssim   \mathbb{P}(X>x)   + n m^{n/2} \mathbb{P}\left(S_n>x, X_n>x-2x^{\varepsilon_0}, \max_{1\leq j\leq n-1} X_j\leq x-2x^{\varepsilon_0}\right) \nonumber\\
	&\leq    \mathbb{P}(X>x)  + n m^{n/2} \mathbb{P}\left( X_n>x-2x^{\varepsilon_0} \right) \lesssim   \mathbb{P}(X>x)  ,
\end{align}
where the last inequality follows from \eqref{proper-of-tail}.  Combining \eqref{e7}, \eqref{e8} and \eqref{e9}, we complete the proof of (i).

(ii) The case $n=1$ holds trivally, so we consider the case $n\geq 2$. On one hand, from \eqref{e9-2},   for each $n\in \mathbb{N}$,
\begin{align}\label{e11-1}
	\lim_{x\to+\infty} \frac{1}{\mathbb{P}(X>x)}\mathbb{P}\left(T_x^+=n, \exists 1\leq  i\neq j\leq n, X_i, X_j > x-2x^{\varepsilon_0} \right) =0. 
\end{align}
On the other hand, let $x$ be large such that $n< (\log x)^3$, then combining \eqref{proper-of-tail} and \eqref{e6},  it holds that for all $1\leq j\leq n$,
\begin{align}
 	& \lim_{x\to+\infty} \frac{1}{\mathbb{P}(X>x)} \mathbb{P}\left(S_j\in (x-2x^{\varepsilon_0}, x] \right) \nonumber\\
 	&= \lim_{x\to+\infty} \frac{\mathbb{P}\left(S_j> x-2x^{\varepsilon_0} \right)}{\mathbb{P}(X>x-2x^{\varepsilon_0})} \frac{\mathbb{P}\left(X> x-2x^{\varepsilon_0} \right)}{\mathbb{P}(X>x)} -\lim_{x\to+\infty} \frac{\mathbb{P}\left(S_j>x \right)}{\mathbb{P}(X>x)} =0.
\end{align}
Therefore,
\begin{align}
	& \limsup_{x\to+\infty} \frac{1}{\mathbb{P}(X>x)}\mathbb{P}\left(T_x^+=n, \max_{1\leq i\leq n} X_i \leq x-2x^{\varepsilon_0}\right) \nonumber\\
	& \leq  \limsup_{x\to+\infty} \frac{1}{\mathbb{P}(X>x)} \mathbb{P}\left(S_{n-1} \leq x-2x^{\varepsilon_0}, S_n> x,  X_n \leq x-2x^{\varepsilon_0}\right) + \lim_{x\to+\infty} \frac{\mathbb{P}\left(S_{n-1}\in (x-2x^{\varepsilon_0}, x] \right)}{\mathbb{P}(X>x)} \nonumber\\
	& \leq \limsup_{x\to+\infty} \frac{1}{\mathbb{P}(X>x)}\mathbb{P}\left(x-X_n\leq  S_{n-1},  2x^{\varepsilon_0}\leq X_n \leq x-2x^{\varepsilon_0}\right).
\end{align}
Since $S_{n-1}$ and $X_n$ are independent, combining \eqref{e6}, $\ell (x)\lesssim x$ and the fact that $x-X_n \geq 2x^{\varepsilon_0}$ on the event $\{2x^{\varepsilon_0}\leq X_n \leq x-2x^{\varepsilon_0}\}$,  it holds that 
\begin{align}\label{e10}
	& \limsup_{x\to+\infty} \frac{1}{\mathbb{P}(X>x)}\mathbb{P}\left(T_x^+=n, \max_{1\leq i\leq n} X_i \leq x-2x^{\varepsilon_0}\right)\nonumber\\
	& \leq (n-1) \limsup_{x\to+\infty} \frac{1}{\mathbb{P}(X>x)}\mathbb{E}\left(1_{\left\{2x^{\varepsilon_0}\leq X_n \leq x-2x^{\varepsilon_0} \right\}} \mathbb{P}(X \geq x-z)\big|_{z=X_n} \right) \nonumber\\
	&\lesssim  n \limsup_{x\to+\infty} \frac{1}{\mathbb{P}(X>x)} \mathbb{E}\left(1_{\left\{2x^{\varepsilon_0}\leq X_n \leq x-2x^{\varepsilon_0} \right\}}  |x-X_n|^{|a|+1}e^{-\lambda (x-X_n)^b} \right) .
\end{align}
Noticing that $\mathbb{P}(X_n \in (k, k+1])\leq \mathbb{P}(X_n >k)\lesssim k^{|a|+1}e^{-\lambda k^b}$, we have
\begin{align}
	&\mathbb{E}\left(1_{\left\{2x^{\varepsilon_0}\leq X_n \leq x-2x^{\varepsilon_0} \right\}}  |x-X_n|^{|a|+1}e^{-\lambda (x-X_n)^b} \right) \nonumber\\
	&\leq x^{|a|+1} \sum_{k=\lfloor 2x^{\varepsilon_0}\rfloor}^{\lceil x-2x^{\varepsilon_0}\rceil} e^{-\lambda (x-k-1)^b}\mathbb{P}(X_n\in (k, k+1]) \nonumber\\
	&\lesssim x^{|a|+1} \sum_{k=\lfloor 2x^{\varepsilon_0}\rfloor}^{\lceil x-2x^{\varepsilon_0}\rceil}   e^{-\lambda (x-k-1)^b}k^{|a|+1}e^{-\lambda k^{b}}\nonumber\\
	&\leq x^{2|a|+2} \sum_{k=\lfloor 2x^{\varepsilon_0}\rfloor}^{\lceil x-2x^{\varepsilon_0}\rceil}   e^{-\lambda (x-k-1)^b-\lambda k^{b}}.
\end{align}
Since $k^b + (x-k-1)^b\geq  \min_{y\in \{ \lfloor 2x^{\varepsilon_0}\rfloor,  \lceil x-2x^{\varepsilon_0} \rceil \}}(y^b + (x-y-1)^b) \geq (2x^{\varepsilon_0})^b +x^b-1$ for large $x$, we conclude that 
\begin{align}\label{e10-1}
&\mathbb{E}\left(1_{\left\{2x^{\varepsilon_0}\leq X_n \leq x-2x^{\varepsilon_0} \right\}}  |x-X_n|^{|a|+1}e^{-\lambda (x-X_n)^b} \right) \nonumber\\
&\lesssim x^{2|a|+3} e^{-\lambda x^b-\lambda (2x^{\varepsilon_0})^{b}} \lesssim x^{3|a|+4}e^{-\lambda (2x^{\varepsilon_0})^{b}} \mathbb{P}(X>x).
\end{align}
Plugging this back to \eqref{e10} implies that 
\begin{align}\label{e11-2}
	& \lim_{x\to+\infty} \frac{1}{\mathbb{P}(X>x)} \mathbb{P}\left(T_x^+=n, \max_{1\leq i\leq n} X_i \leq x-2x^{\varepsilon_0}\right)= 0. 
\end{align}
Combining \eqref{e11-1} and \eqref{e11-2}, we obtain that
\begin{align}\label{e11-4}
	& \lim_{x\to+\infty} \frac{\mathbb{P}\left(T_x^+=n\right)}{\mathbb{P}(X>x)} \nonumber\\
	&= \lim_{x\to+\infty} \frac{1}{\mathbb{P}(X>x)}\mathbb{P}\left(T_x^+=n, \exists !\ 1\leq j\leq n\ \mbox{such that } \ X_j > x-2x^{\varepsilon_0}\right).
\end{align}
Similar to \eqref{e11-3}, by \eqref{proper-of-tail}, we have the following estimate:
\begin{align}
	 & \lim_{x\to +\infty} \frac{1}{\mathbb{P}(X>x)}\mathbb{P}\left(\exists\  1\leq j\leq n\ \mbox{such that } X_j>x- 2x^{\varepsilon_0}, \max_{\ell\neq j} |X_\ell| > x^{\varepsilon_0} \right) \nonumber\\
	&\leq \lim_{x\to +\infty} \frac{n}{\mathbb{P}(X>x)}\mathbb{P}\left(X_1>x- 2x^{\varepsilon_0}, \max_{\ell\neq 1} |X_\ell| > x^{\varepsilon_0} \right) \nonumber\\
	&=  \lim_{x\to +\infty} \frac{n}{\mathbb{P}(X>x)} \mathbb{P}\left(X>x- 2x^{\varepsilon_0}\right)\mathbb{P}\left(  |X_\ell| > x^{\varepsilon_0} \right)^{n-1}= n \lim_{x\to +\infty} \mathbb{P}\left(  |X_\ell| > x^{\varepsilon_0 }\right)^{n-1}  =0.
\end{align}
Therefore, the limit on the left hand side of \eqref{e11-4} is equal to 
\begin{align}\label{e11-5}
	& \lim_{x\to+\infty} \frac{\mathbb{P}\left(T_x^+=n\right)}{\mathbb{P}(X>x)} \nonumber\\
	& = \lim_{x\to+\infty} \frac{1}{\mathbb{P}(X>x)}\mathbb{P}\left(T_x^+=n, \exists \ 1\leq j\leq n \mbox{ such that } X_j > x-2x^{\varepsilon_0}, \max_{ \ell\neq j} |X_\ell| \leq x^{\varepsilon_0}\right).
\end{align}
If the unique large jump $X_j$ satisfies $j\leq n-1$, then on $\{T_x^+=n \}\cap \{\max_{\ell\neq j} |X_\ell| \leq x^{\varepsilon_0}\}$, it holds that $X_j =S_j- \sum_{\ell=1}^{j-1}X_\ell \leq x+ nx^{\varepsilon_0}$, which together with \eqref{proper-of-tail} implies that 
\begin{align}\label{e11-6}
	&\lim_{x\to+\infty} \frac{1}{\mathbb{P}(X>x)}\mathbb{P}\left(T_x^+=n, \exists\  1\leq j\leq n-1 \mbox{ such that } X_j > x-2x^{\varepsilon_0}, \max_{ \ell\neq j} |X_\ell| \leq x^{\varepsilon_0}\right) \nonumber\\
	&\leq \sum_{j=1}^{n-1}\lim_{x\to+\infty} \frac{1}{\mathbb{P}(X>x)}\mathbb{P}\left( x+ nx^{\varepsilon_0} \geq X_j > x-2x^{\varepsilon_0}\right) =0.
\end{align}
Therefore, combining \eqref{e11-5} and \eqref{e11-6}, we have the following upper bound
\begin{align}\label{eq12-1}
	& \lim_{x\to+\infty} \frac{\mathbb{P}\left(T_x^+=n\right)}{\mathbb{P}(X>x)}  = \lim_{x\to+\infty} \frac{1}{\mathbb{P}(X>x)}\mathbb{P}\left(T_x^+=n,  X_n > x-2x^{\varepsilon_0}, \max_{1\leq  \ell \leq n-1} |X_\ell | \leq  x^{\varepsilon_0}\right) \nonumber\\
	&\leq \lim_{x\to+\infty} \frac{\mathbb{P}\left(X_n > x-2x^{\varepsilon_0}\right)}{\mathbb{P}(X>x)}=1,
\end{align}
where the last equality follows from \eqref{proper-of-tail}. For the lower bound, noticing that on the event $\{ X_n > x+nx^{\varepsilon_0},\max_{ 1\leq \ell \leq n-1} |X_\ell| \leq x^{\varepsilon_0}\}$, we have $\max_{1\leq j\leq n-1} S_j \leq nx^{\varepsilon_0} \leq x$ and $S_n =X_n+S_{n-1}>x$ for large $x$. Therefore,
by \eqref{proper-of-tail},   
\begin{align}\label{eq12-2}
	& \lim_{x\to+\infty} \frac{\mathbb{P}\left(T_x^+=n\right)}{\mathbb{P}(X>x)} \geq \lim_{x\to+\infty} \frac{1}{\mathbb{P}(X>x)}\mathbb{P}\left(X_n > x+nx^{\varepsilon_0},\max_{ 1\leq \ell \leq n-1} |X_\ell| \leq x^{\varepsilon_0} \right) \nonumber\\
	&= \lim_{x\to+\infty} \frac{1}{\mathbb{P}(X>x)}\mathbb{P}\left(X> x+nx^{\varepsilon_0}\right) \mathbb{P}\left( |X| \leq x^{\varepsilon_0} \right)^{n-1}=1.
\end{align}
Combining \eqref{eq12-1} and \eqref{eq12-2} completes the proof of (ii).

\hfill$\Box$

\section{Random walks without sufficient exponential moment}\label{S3}

 In this section, we assume that $X$ satisfies {\bf(H2)}. According to the representation theorem (for example, see \cite[Theorem 1.3.1, p.12]{BGT1987}), there exists $K_0>x_*$  such that 
 \begin{align}\label{Def-of-K-0}
 	0< \inf_{x\in [K_1, K_2]} \ell(x) \leq \sup_{x\in [K_1, K_2]} \ell(x)<\infty \quad\mbox{for all}\quad K_2>K_1\geq K_0.
 \end{align}
 Let $\{X_i, i\in\mathbb{N}\}$ be iid random variables equal in law to $X$. Recall the definitions of $S_n$ and $T_x^+$ in \eqref{Def-of-stopping-time}.  For each $n$, define 
 \begin{align}\label{Change-of-measure}
 	\frac{\mathrm{d} \widetilde{\mathbb{P}}}{\mathrm{d} \mathbb{P}}\bigg|_{\sigma(X_1,...,X_n)}:= \frac{1}{\big(\mathbb{E}(e^{\gamma X})\big)^n} e^{\gamma S_n}.
 \end{align}
 According to {\bf(H2)}, for any $x>x_*$,
 \begin{align}\label{density-X-tildeP}
 	\widetilde{\mathbb{P}}(X>x)= \int_x^\infty \frac{\ell(y)}{\mathbb{E}(e^{\gamma X})} y^a e^{-\lambda y^b}\mathrm{d}y.
 \end{align}
We check here that $(X, \widetilde{\mathbb P})$ satisfies {\bf(H1)}. 

\begin{lemma}\label{lem1}
	There exists some slowly varying function $\widetilde{\ell}$ at $\infty$ such that for all $x>K_0$, 
	\[
		\widetilde{\mathbb{P}}(X>x)=	\widetilde{\ell}(x) x^{a+1-b} e^{-\lambda x^b}.
	\]
	Moreover, $\widetilde{\mathbb{E}}(|X|^k)<\infty$ for all $k>0$ when $b\in (0,1)$ and $\lim_{x\to+\infty} \frac{\widetilde{\ell}(x)}{\ell(x)}= \frac{-(a+1)}{\mathbb{E}(e^{\gamma X})}1_{\{b=0\}}+ \frac{1}{\lambda b \mathbb{E}(e^{\gamma X})}1_{\{b\in (0,1)\}}$.
\end{lemma}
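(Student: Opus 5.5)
The plan is to define $\widetilde{\ell}$ directly by the formula forced by the statement. For $x>K_0$ set
\[
\widetilde{\ell}(x):= x^{-(a+1-b)}e^{\lambda x^b}\,\widetilde{\mathbb{P}}(X>x)= \frac{x^{-(a+1-b)}e^{\lambda x^b}}{\mathbb{E}(e^{\gamma X})}\int_x^\infty \ell(y)y^a e^{-\lambda y^b}\,\mathrm{d}y,
\]
which is just \eqref{density-X-tildeP} rewritten. The integral is finite. When $b\in(0,1)$ the exponential factor makes it converge. When $b=0$ it converges because $a<-2$ and $\ell(y)\lesssim y^{\delta}$ by Lemma \ref{lem-useful-fact}(ii). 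It is also positive, by \eqref{Def-of-K-0}. The whole statement then reduces to showing that $\widetilde{\ell}(x)/\ell(x)$ converges to the claimed constant $c\in(0,\infty)$. Once that limit is known, $\widetilde{\ell}$ is slowly varying automatically: it is asymptotically a positive constant times the slowly varying $\ell$, and it is positive and measurable on $(K_0,\infty)$.

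\textbf{Case $b=0$.} Here $\lambda=0$, and the claim is $\int_x^\infty \ell(y)y^a\,\mathrm{d}y\sim \ell(x)x^{a+1}/(-(a+1))$. This is Karamata's theorem for slowly varying functions with index $a<-1$. To keep things self-contained, I would substitute $y=xt$ to get $x^{a+1}\ell(x)\int_1^\infty \frac{\ell(xt)}{\ell(x)}t^a\,\mathrm{d}t$. The ratio $\ell(xt)/\ell(x)$ tends to $1$ for each fixed $t$ by Lemma \ref{lem-useful-fact}(i). Potter's bound, Lemma \ref{lem-useful-fact}(ii), gives domination by $2t^{a+\delta}$ with $a+\delta<-1$. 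Dominated convergence then yields the limit $\int_1^\infty t^a\,\mathrm{d}t=-1/(a+1)$.

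\textbf{Case $b\in(0,1)$.} This is the step needing the most care. I would substitute $u=\lambda(y^b-x^b)$, so that $y=(x^b+u/\lambda)^{1/b}$ and $\mathrm{d}y=\frac{1}{\lambda b}y^{1-b}\,\mathrm{d}u$. This gives
\[
\int_x^\infty \ell(y)y^a e^{-\lambda y^b}\mathrm{d}y=\frac{e^{-\lambda x^b}}{\lambda b}\int_0^\infty \ell(y(u))\,y(u)^{a+1-b}e^{-u}\,\mathrm{d}u .
\]
Dividing by $\ell(x)x^{a+1-b}e^{-\lambda x^b}$ leaves the integrand $\frac{\ell(y(u))}{\ell(x)}\big(\frac{y(u)}{x}\big)^{a+1-b}e^{-u}$. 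For fixed $u$ we have $y(u)/x=(1+ux^{-b}/\lambda)^{1/b}\to1$, so the integrand tends to $e^{-u}$ by Lemma \ref{lem-useful-fact}(i). For domination, Potter's bound and $y(u)/x\ge1$ give the bound $2(1+ux^{-b}/\lambda)^{(|a|+2)/b}e^{-u}\le 2(1+u/\lambda)^{(|a|+2)/b}e^{-u}$ once $x\ge1$. This is integrable, so dominated convergence gives the ratio limit $1/(\lambda b\,\mathbb{E}(e^{\gamma X}))$.

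\textbf{Moments.} Finally, for $b\in(0,1)$ the moments $\widetilde{\mathbb{E}}(|X|^k)$ must be finite. The right tail is finite since $\widetilde{\mathbb{P}}(X>x)$ decays like $e^{-\lambda x^b}$ up to polynomial factors. The left tail: $\widetilde{\mathbb{E}}(|X|^k1_{\{X<0\}})=\mathbb{E}(|X|^ke^{\gamma X}1_{\{X<0\}})/\mathbb{E}(e^{\gamma X})$, and $|z|^ke^{\gamma z}$ is bounded for $z<0$. The main obstacle is the uniform domination in the $b\in(0,1)$ case, where the polynomial growth of the Jacobian and of $\ell$ has to be controlled against $e^{-u}$.
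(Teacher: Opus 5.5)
Your argument is correct, but in the case $b=0$ it proves a different constant from the one in the statement. Your dominated-convergence computation gives
\[
\lim_{x\to+\infty}\frac{\widetilde{\ell}(x)}{\ell(x)}=\frac{1}{\mathbb{E}(e^{\gamma X})}\int_1^\infty t^a\,\mathrm{d}t=\frac{1}{-(a+1)\,\mathbb{E}(e^{\gamma X})}.
\]
This is the reciprocal of the factor $-(a+1)$ in the statement. A check with $\ell\equiv 1$, $a=-3$ confirms it: $\int_x^\infty y^{-3}\,\mathrm{d}y=x^{-2}/2$, not $2x^{-2}$. So the statement, and the paper's proof that quotes the Karamata-type result, have the ratio inverted, and your value is the right one. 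You should say explicitly that you are establishing the corrected constant. As written, you say the problem reduces to convergence to ``the claimed constant'' and then silently obtain a different one. The slip is harmless for the rest of the paper, because for $b=0$ only the order $\widetilde{\mathbb{P}}(X>x)\asymp \ell(x)x^{a+1}$ is used later.

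Beyond that, your route differs from the paper's only in how the slowly varying factor is handled.
\begin{itemize}
\item For $b=0$ the paper simply cites Karamata's theorem. You reprove it via $y=xt$, pointwise convergence from Lemma \ref{lem-useful-fact}(i), and Potter domination by $2t^{a+\delta}$. This is self-contained and equivalent.
\item For $b\in(0,1)$ the paper first gets the asymptotics of the pure integral $\int_x^\infty y^a e^{-\lambda y^b}\,\mathrm{d}y$ via $z=y^b$. It then reinserts $\ell$ by splitting at $2x$: uniform convergence of $\ell(y)/\ell(x)$ on $[x,2x]$, plus Potter's bound and L'H\^opital's rule to show $\widetilde{\mathbb{P}}(X>2x)$ is negligible.
\item Your single substitution $u=\lambda(y^b-x^b)$ keeps $\ell$ inside the integral. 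Potter's bound then directly gives the integrable majorant $2(1+u/\lambda)^{(|a|+2)/b}e^{-u}$. One dominated-convergence step replaces the split and the L'H\^opital estimate, which is cleaner. You need $x\ge K(2,1)$, not just $x\ge 1$, for Potter's bound to apply.
\end{itemize}
Your moment argument is essentially the paper's. Boundedness of $|z|^k e^{\gamma z}$ on $z<0$ does the same job as the left-tail bound \eqref{left-tail-X}.
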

\textbf{Proof: } We divide the proof into two steps.

\noindent
{\bf (Step 1).} In this step, we prove the lemma for $b=0$. From \cite[Proposition 1.5.10, p. 27]{BGT1987}, we have
\begin{align} 
	\lim_{x\to+\infty} \frac{\widetilde{\mathbb{P}}(X>x)}{x^{a+1}\ell(x)}= \frac{-(a+1)}{\mathbb{E}(e^{\gamma X})},
\end{align}
which completes the proof of the lemma by taking $\widetilde{\ell}(x) =x^{-(a+1)}\widetilde{\mathbb{P}}(X>x)$.

\noindent
{\bf (Step 2).} In this step, we prove the lemma for $b\in (0,1)$. Noticing that 
\begin{align}
	\int_x^\infty y^a e^{-\lambda y^b}\mathrm{d} y =  \frac{1}{b}\int_{x^b}^\infty z^{\frac{a+1-b}{b}} e^{-\lambda z}\mathrm{d}z = \frac{x^{a+1-b}e^{-\lambda x^b}}{b}\int_{0}^\infty (1+zx^{-b})^{\frac{a+1-b}{b}} e^{-\lambda z}\mathrm{d}z .
\end{align}
According to dominated convergence theorem,  we get
\begin{align}
	\lim_{x\to+\infty} \frac{\int_x^\infty y^a e^{-\lambda y^b}\mathrm{d} y }{x^{a+1-b}e^{-\lambda x^b}}= \frac{1}{b}\int_0^\infty e^{-\lambda z}\mathrm{d}z = \frac{1}{\lambda b}.
\end{align}
Consequently, with $x$ replaced by $2x$ in the above limit, we obtain 
\begin{align}\label{Limit2}
			\lim_{x\to+\infty} \frac{\int_x^{2x} y^a e^{-\lambda y^b}\mathrm{d} y }{x^{a+1-b}e^{-\lambda x^b}} = \frac{1}{\lambda b}.
\end{align}
According to Lemma \ref{lem-useful-fact} (ii) (with $A=2, \delta=1$), 
\begin{align}\label{e14}
		\widetilde{\mathbb{P}}(X>2x) & =\frac{\ell(x)}{\mathbb{E}(e^{\gamma X})}  \int_{2x}^\infty \frac{\ell(y)}{\ell(x)}y^a e^{-\lambda y^b}\mathrm{d}y\lesssim \ell(x)  \int_{2x}^\infty \frac{y^{a+1}}{x} e^{-\lambda y^b}\mathrm{d}y.
\end{align}
Therefore,  combining  \eqref{e14} and L'H\^opital's rule, we conclude that 
 \begin{align}
 	\limsup_{x\to+\infty} \frac{\widetilde{\mathbb{P}}(X>2x) }{\ell(x) x^{a+1-b} e^{-\lambda x^b}} & \lesssim \lim_{x\to+\infty}\frac{\int_{2x}^\infty y^{a+1} e^{-\lambda y^b}\mathrm{d}y}{x^{a+2-b} e^{-\lambda x^b}} \nonumber\\
 	& = \lim_{x\to+\infty}\frac{-2 (2x)^{a+1} e^{-\lambda (2x)^b} }{ (a+2-b)x^{a+1-b} e^{-\lambda x^b} - \lambda bx^{a+2}e^{-\lambda x^b}} =0.
 \end{align}
 Since $\ell(y)/\ell(x)\to 1$ uniformly for $y\in [x,2x]$ as $x\to+\infty$, combining \eqref{Limit2} and the above limit, it holds that 
 \begin{align}
 		 & \lim_{x\to+\infty} \frac{\widetilde{\mathbb{P}}(X>x) }{\ell(x) x^{a+1-b} e^{-\lambda x^b}}  = 	\lim_{x\to+\infty} \frac{\widetilde{\mathbb{P}}(X>x) - \widetilde{\mathbb{P}}(X>2x) }{\ell(x) x^{a+1-b} e^{-\lambda x^b}}\nonumber\\
 		&= \frac{1}{ \lambda b \mathbb{E}(e^{\gamma X})}\lim_{x\to \infty} \frac{ \int_x^{2x} \ell(y) y^a e^{-\lambda y^b}\mathrm{d}y}{\ell(x) \int_x^{2x} y^a e^{-\lambda y^b}\mathrm{d} y}  =  \frac{1}{ \lambda b \mathbb{E}(e^{\gamma X})}.
 \end{align}
 Now set $\widetilde{\mathbb P}(X>x)= \widetilde{\ell}(x) x^{a+1-b} e^{-\lambda x^b}$, we know that $\widetilde{\ell}$ is a slowly varying function and this completes the proof of the case $b\in (0,1)$.
 
  The check for $\widetilde{\mathbb{E}}(|X|^k)<\infty$ is obvious since the right tail of $(X, \widetilde{\mathbb{P}})$ is stretched exponential and by \eqref{Change-of-measure}, for any $x>0$, 
  \begin{align}\label{left-tail-X}
  	\widetilde{\mathbb{P}}(X<-x) = \frac{1}{\mathbb{E}(e^{\gamma X})}\mathbb{E}(e^{\gamma X}1_{\{X<-x\}}) \leq  e^{-\gamma x}  \frac{1}{\mathbb{E}(e^{\gamma X})} \lesssim e^{-\gamma x}.
  \end{align}

 \hfill$\Box$

	\begin{prop}\label{prop2}
	Let $\beta \in (0,1)$ be fixed.
	\begin{itemize}
		\item[(i)] For any $x>K_0$ and $n\in\mathbb{N}$,
		\[
	      \beta^{n/2} \widetilde{\mathbb{E}}\left(e^{-\gamma (S_n-x)}1_{\{T_x^+=n\}}\right)  \lesssim_\beta   \ell(x)x^a e^{-\lambda x^b}.
		\]
		\item[(ii)] For each $n\in \mathbb{N}$,
		\[
		\lim_{x\to+\infty} \frac{ e^{\lambda x^b}}{ \ell(x)x^a } \widetilde{\mathbb{E}}\left(e^{-\gamma (S_n-x)}1_{\{T_x^+=n\}}\right)= \frac{1}{\gamma \mathbb{E}(e^{\gamma X})} \left(1+ \sum_{j=1}^{n-1}\widetilde{\mathbb{E}}\left( \left(1-e^{-\gamma \min_{1\leq i\leq j} S_i}\right)_+\right) \right),
		\]
	\end{itemize}
	here $x_+:= \max\{0,x\}$ and we adapt the convention $\sum_{j=1}^0 = 0$.
\end{prop}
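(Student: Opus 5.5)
The plan is to decompose according to the position of the overshoot and to use the fact, from Lemma \ref{lem1}, that $(X,\widetilde{\mathbb P})$ has a stretched-exponential tail. The method is the one-big-jump analysis from the proof of Proposition \ref{prop1}. Throughout, write $c:=\mathbb{E}(e^{\gamma X})$. By \eqref{density-X-tildeP}, $X$ has density $\ell(y)y^a e^{-\lambda y^b}/c$ under $\widetilde{\mathbb P}$ for $y>x_*$. The key identity is that for fixed $z\le x$ with $x-z$ large,
\begin{align}
\widetilde{\mathbb{E}}\left(e^{-\gamma(z+X-x)}1_{\{z+X>x\}}\right)=\frac{1}{c}\int_{x-z}^\infty e^{-\gamma(y-(x-z))}\ell(y)y^ae^{-\lambda y^b}\,\mathrm{d}y\sim \frac{\ell(x-z)(x-z)^ae^{-\lambda (x-z)^b}}{\gamma c}.
\end{align}
This holds because the factor $e^{-\gamma(\cdot)}$ localizes the integral to $y\in[x-z,x-z+O(1)]$, where the tail is essentially constant by \eqref{proper-of-tail}. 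When $z$ is small relative to $x$, the right-hand side is asymptotically $\ell(x)x^ae^{-\lambda x^b}/(\gamma c)$ multiplied by $e^{\lambda(x^b-(x-z)^b)}\to1$.

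\textbf{Part (ii).} Condition on $\mathcal F_{n-1}$. Then
\[
\widetilde{\mathbb{E}}\left(e^{-\gamma(S_n-x)}1_{\{T_x^+=n\}}\right)=\widetilde{\mathbb E}\left(1_{\{\max_{i\le n-1}S_i\le x\}}\,g_x(S_{n-1})\right),\qquad g_x(z):=\widetilde{\mathbb E}\left(e^{-\gamma(z+X-x)}1_{\{z+X>x\}}\right).
\]
There are two scenarios.
\begin{itemize}
\item[(a)] All of $X_1,\dots,X_{n-1}$ are of order one. Then $g_x(S_{n-1})\sim \ell(x)x^ae^{-\lambda x^b}/(\gamma c)$, and dominated convergence gives the term $\frac{1}{\gamma c}$ (the "$1$").
\item[(b)] Some $X_k$ with $k\le n-1$ is the big jump. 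Given the prefix, the event that the walk crosses between $x-S_{k-1}$ and the overshoot region is still of the same order, and the small $e^{-\gamma\cdot}$ weight makes the overshoot an $O(1)$ quantity. Set $j:=n-k$. After the big jump at time $k$ brings the walk to $x-u$ with $u\ge0$ of order one (density $\approx \ell(x)x^ae^{-\lambda x^b}/c$ in $u$), the walk must stay $\le x$ for $j-1$ further steps and then jump above $x$, picking up the weight $e^{-\gamma(S_n-x)}$.
\end{itemize}
For scenario (b), by the Markov property and the density computation the contribution is
\[
\frac{\ell(x)x^ae^{-\lambda x^b}}{c}\int_0^\infty \widetilde{\mathbb E}\left(e^{-\gamma(S'_j-u)}1_{\{\max_{i<j}S'_i\le u<S'_j\}}\right)\mathrm{d}u .
\]
Here the prefix of length $k-1$ is $O(1)$ and contributes probability $\to1$. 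The remaining task is to identify the integral. Undoing the tilt gives
\[
\widetilde{\mathbb{E}}\left(e^{-\gamma S'_j}F\right)=c^{-j}\mathbb E(F),
\]
and with this I would rewrite the integral via Fubini as $\frac1\gamma\widetilde{\mathbb E}\big((1-e^{-\gamma\min_{1\le i\le j}S_i})_+\big)$. This is the step I would verify carefully, probably by a time-reversal/duality argument: integrating $u$ over $[\max_{i<j}S'_i\vee 0,\,S'_j)$ gives $\int e^{-\gamma(S'_j-u)}\mathrm{d}u=\gamma^{-1}(1-e^{-\gamma(S'_j-\max(\cdots))})$, and reversing the increments turns $S'_j-\max_{i<j}S'_i$ into $\min_{i\le j}\hat S_i$. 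Summing over $j=1,\dots,n-1$ yields the formula.

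To make the heuristics rigorous I would follow the proof of Proposition \ref{prop1}(ii), using the one-big-jump event with threshold $x^{\varepsilon_0}$. Three estimates are needed:
\begin{itemize}
\item[1.] Two big jumps are negligible, via $(\widetilde{\mathbb P}(X>x-2x^{\varepsilon_0}))^2$.
\item[2.] No big jump is negligible, via an analogue of \eqref{e10}. Under $\widetilde{\mathbb{P}}$ the left tail is exponential by \eqref{left-tail-X}, and $g_x(z)\lesssim \widetilde{\mathbb P}(X>x-z)$.
\item[3.] The remaining jumps are bounded by $x^{\varepsilon_0}$. Given that, \eqref{proper-of-tail} gives uniform asymptotics.
\end{itemize}
Dominated convergence in $u$ uses $e^{-\gamma(\cdot)}\le1$ together with the bound $\widetilde{\mathbb P}(\max_{i<j}S_i\le u)\lesssim$ integrable, which holds for $j\ge2$ thanks to the positive-drift tail. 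For $j=1$ the integrand is $\int_0^\infty\widetilde{\mathbb E}\big(e^{-\gamma(X-u)}1_{X>u}\big)\mathrm{d}u$, which has to be reinterpreted; I expect the correct bookkeeping to put this into the "$1$" term.

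\textbf{Part (i).} Bound $e^{-\gamma(S_n-x)}1_{\{T^+_x=n\}}\le 1_{\{S_n>x\}}$. This gives
\[
\widetilde{\mathbb E}\left(e^{-\gamma(S_n-x)}1_{\{T_x^+=n\}}\right)\le\widetilde{\mathbb P}(S_n>x),
\]
but that loses a factor $x^{1-b}$ relative to Lemma \ref{lem1}, so it is too crude. Instead I would keep the weight, write the quantity as $\widetilde{\mathbb E}\big(1_{\{S_{n-1}\le x\}}g_x(S_{n-1})\big)$, and use
\[
g_x(z)\lesssim \ell(x-z)(x-z)^ae^{-\lambda(x-z)^b}\wedge 1.
\]
With this, I would rerun the three-case proof of Proposition \ref{prop1}(i) for the walk under $\widetilde{\mathbb P}$, with $\beta$ in place of $m$:
\begin{itemize}
\item $n<(\log x)^3$: use \eqref{e6} for $S_{n-1}$.
\item Large $n$: use the trivial bound $\beta^{n/2}$.
\item Middle range: use the exponential Chebyshev bound of Lemma \ref{lem2}, which applies because $(X,\widetilde{\mathbb P})$ satisfies {\bf(H1)} with all moments finite.
\end{itemize}
In each case the conclusion is that $S_{n-1}$ exceeding $x-t$ is paid for by $\widetilde{\mathbb P}(X>x-t)$, and convolving this with the bound on $g_x$ yields $\ell(x)x^ae^{-\lambda x^b}$ up to polynomial-in-$n$ factors absorbed by $\beta^{n/2}$. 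For $b=0$ the subadditivity-type convolution $\sum_t(x-t)^{a}\cdot t^{a+1}\lesssim x^a$ needs $a<-2$, and this is precisely where that assumption in {\bf(H2)} enters.

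\textbf{Main obstacle.} I expect the hardest step to be the middle-range convolution estimate in (i). It needs the sharp density-level (not tail-level) bound uniformly in $n$. In (ii), the delicate part is the exact identification of the constant.
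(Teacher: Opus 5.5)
Part (ii) follows the paper's route: isolate the one big jump, condition on the other increments, integrate against the big jump's density, and time-reverse to get $\min_{1\le i\le j}S_i$. Your variable $u=x-S_k$ is only a change of variables from the paper's overshoot $z\in(0,U]$ with $U=S_n-\max_{j\le k\le n-1}S_k$. Part (i), however, has a genuine gap, exactly at the point you call the main obstacle.

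Conditioning on $\mathcal F_{n-1}$ puts the weight on $X_n$. When the big jump is one of $X_1,\dots,X_{n-1}$ and $x-S_{n-1}=O(1)$, $g_x(S_{n-1})$ is of order one. So you need a local bound $\widetilde{\mathbb P}(x-S_{n-1}\in[k,k+1))\lesssim n\,\ell(x)x^ae^{-\lambda x^b}$ for $k=O(1)$. The tools you invoke (Proposition~\ref{prop1}(i), \eqref{e6}, Lemma~\ref{lem2}) only control tails: $\widetilde{\mathbb P}(S_{n-1}>x-k)\approx n\widetilde{\mathbb P}(X>x)\asymp n\,x^{1-b}\ell(x)x^ae^{-\lambda x^b}$. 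Integrating a tail bound against $g_x$ reproduces exactly the factor $x^{1-b}$ you set out to avoid. Your inequality $\sum_t(x-t)^at^{a+1}\lesssim x^a$ fails for the same reason, since the terms with $x-t=O(1)$ alone give $x^{a+1}$.

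The missing idea is to use the weight only on the one-big-jump event and integrate it against the density of that single jump:
\begin{itemize}
\item On $\{S_n>x+x^{\varepsilon_0}\}$ the weight is at most $e^{-\gamma x^{\varepsilon_0}}$. This absorbs the $x^{1-b}$ loss of Proposition~\ref{prop1}(i) applied to $\widetilde{\mathbb P}$.
\item The remaining configurations are small even without the weight. These are two jumps above $x-2x^{\varepsilon_0}$; for $n<(\log x)^{5/2}$, a jump in $[2x^{\varepsilon_0},x-2x^{\varepsilon_0}]$ (extra factor $e^{-\lambda(2x^{\varepsilon_0})^b}$); for $(\log x)^{5/2}\le n<x^{(1+b)/2}$, no jump above $x-2x^{\varepsilon_0}$ (Lemma~\ref{lem2}, loss absorbed by $\beta^{n/2}$); and $n\ge x^{(1+b)/2}$, which is trivial.
\item On the event with exactly one jump above $x-2x^{\varepsilon_0}$, replace $\{T_x^+=n\}$ by the exchangeable event $\{x<S_n<x+x^{\varepsilon_0}\}$. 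Move the big jump to time $n$ at cost $n$ and note that $S_{n-1}<3x^{\varepsilon_0}$. Then bound $\sup_{y>x-3x^{\varepsilon_0}}\widetilde{\mathbb E}(e^{-\gamma(X-y)}1_{\{X\in(y,y+x^{\varepsilon_0})\}})$ directly by the density \eqref{density-X-tildeP}.
\end{itemize}
This is where the density-level bound, uniform in $n$, comes from.

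There are several secondary problems.

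For $b=0$ your tools are unavailable. Here $\varepsilon_0=0$, Lemma~\ref{lem2} is void since $\lambda=0$, and $(X,\widetilde{\mathbb P})$ has only the regularly varying tail $\widetilde\ell(x)x^{a+1}$, so not all moments are finite. The paper instead uses the threshold $x^\varepsilon$ with $\varepsilon$ near $1$. For $n<(\log x)^2$ a jump above $x^\varepsilon$ is forced, and for $n\ge(\log x)^2$ one uses $\beta^{n/2}$. The assumption $a<-2$ enters through the two-big-jump bound $n^2\widetilde{\mathbb P}(X>x^\varepsilon)^2=o(x^a)$, i.e.\ $(2\varepsilon-1)a+2\varepsilon<0$, not through a convolution.

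In (ii), every negligibility estimate must hold at scale $\ell(x)x^ae^{-\lambda x^b}$, a factor $x^{1-b}$ below $\widetilde{\mathbb P}(X>x)$. So the step of Proposition~\ref{prop1}(ii) that drops the constraint on $\{S_{n-1}\in(x-2x^{\varepsilon_0},x]\}$ cannot be copied, because that probability is of order $x^{\varepsilon_0}\ell(x)x^ae^{-\lambda x^b}$. What works is the paper's split into an intermediate jump versus a single jump above $x-2x^{\varepsilon_0}$ with the rest below $2x^{\varepsilon_0}$.

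Two smaller slips:
\begin{itemize}
\item The case $j=1$ needs no reinterpretation. Indeed $\int_0^\infty\widetilde{\mathbb E}(e^{-\gamma(X-u)}1_{\{X>u\}})\,\mathrm du=\gamma^{-1}\widetilde{\mathbb E}((1-e^{-\gamma X})_+)$ is exactly the $j=1$ summand of the formula, while the ``$1$'' comes only from the big jump occurring at time $n$.
\item $u\mapsto\widetilde{\mathbb P}(\max_{i<j}S_i\le u)$ increases to $1$, so it is not a dominating function, and $\widetilde{\mathbb E}(X)$ need not be positive. Use $\widetilde{\mathbb P}(S'_j>u)$ instead, or, as the paper does, $e^{-\gamma z}$ in the overshoot variable.
\end{itemize}
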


\noindent
\textbf{Proof of Proposition \ref{prop2}  for $b=0$:}  
Since $a<-2$ by {\bf(H2)}, there exists a $\varepsilon\in (1/2,1)$ very close to $1$ such that $(-a)>\frac{2\varepsilon}{2\varepsilon -1}$, which is also equivalent to $(2\varepsilon-1)a+2\varepsilon<0$.   Also, the proof for $n=1$ follows by standard calculation, so we consider the case $n\geq 2$. 

\noindent
{\bf(Step 1)}. In this step, we define a good event $G_n$ and prove a uniform convergence result (see \eqref{e15} below). 
Noticing that $e^{-\gamma (S_n-x)}\leq e^{-\gamma x^{\varepsilon}}$ on the event $\{S_n>x+x^\varepsilon\}$, we have 
\begin{align}\label{e15-1}
	\sup_{n\in \mathbb{N}} \frac{  \beta^{n/2}  }{\ell(x) x^a} \widetilde{\mathbb{E}}\left(e^{-\gamma (S_n-x)}1_{\{T_x^+=n, S_n> x+ x^{\varepsilon}\}}\right) & \leq 	\frac{  e^{-\gamma x^{\varepsilon}} }{\ell(x) x^a} \stackrel{x\to+\infty}{\longrightarrow}0.  
\end{align}
Now define 
\begin{align}\label{def-of-F}
	F_n:= \left\{ \mbox{there exists at most one }i \leq n \mbox{ such that } X_i>x^\varepsilon \right\}.
\end{align}
Then by Lemma \ref{lem1},
\begin{align}\label{e15-2}
	& \frac{  \beta^{n/2}  }{\ell(x) x^a} \widetilde{\mathbb{E}}\left(e^{-\gamma (S_n-x)}1_{\{T_x^+=n, F_n^c\}}\right)  \leq 	\frac{  \beta^{n/2}  }{\ell(x) x^a}  \widetilde{\mathbb{P}}\left(  F_n^c \right)  \leq  \frac{\beta^{n/2} n^2 }{\ell(x) x^a}\widetilde{\mathbb{P}}\left( X>x^\varepsilon \right)^2 \nonumber\\
	& \lesssim_\beta \frac{\ell^2(x^{\varepsilon}) x^{2\varepsilon(a+1)}}{\ell(x) x^a}  = \frac{\ell^2(x^{\varepsilon}) }{\ell(x) } x^{(2\varepsilon-1)a + 2\varepsilon}.
\end{align}
Since $\frac{\ell^2(x^{\varepsilon}) }{\ell(x) } $ is still a slowly varying function at $\infty$ and $(2\varepsilon-1)a + 2\varepsilon<0$, the right hand side of the above display converges to $0$ as $x\to+\infty$ uniformly for all $n\in \mathbb{N}$. 
It follows from \eqref{left-tail-X} that
\begin{align}\label{e15-4}
	&\widetilde{\mathbb{E}}\left(e^{-\gamma (S_n-x)}1_{\{T_x^+=n, \min_{1\leq i\leq n} X_i <-x^{\varepsilon}\}}\right)  \leq  \widetilde{\mathbb{P}}\left( \min_{1\leq i\leq n} X_i< -x^{\varepsilon}\right)\nonumber\\
	& \leq n \widetilde{\mathbb{P}}\left(X<-x^{\varepsilon}\right) \lesssim  ne^{-\gamma x^{\varepsilon}} .
\end{align}
Therefore, we have 
\begin{align}\label{e15-3}
	& \sup_{n\in \mathbb{N}}\frac{ \beta^{n/2} }{\ell(x) x^a}\widetilde{\mathbb{E}}\left(e^{-\gamma (S_n-x)}1_{\{T_x^+=n, \min_{1\leq i\leq n} X_i < -x^{\varepsilon}\}}\right)  \nonumber\\
	&\lesssim  \sup_{n\in \mathbb{N}} n \beta^{n/2}  \frac{e^{-\gamma x^{\varepsilon}}}{\ell(x)x^a} \lesssim_\beta \frac{e^{-\gamma x^{\varepsilon}}}{\ell(x)x^a}\stackrel{x\to+\infty}{\longrightarrow}0. 
\end{align}
In conclusion,  define
\begin{align}
	G_n:= F_n\cap \{S_n\leq x+x^\varepsilon\} \cap \Big\{\min_{1\leq i\leq n} X_i \geq -x^{\varepsilon} \Big\},
\end{align} 
then combining \eqref{e15-1}, \eqref{e15-2} and \eqref{e15-3}, we have 
\begin{align}\label{e15}
	\lim_{x\to+\infty} \sup_{n\in \mathbb{N}}\frac{ \beta^{n/2} }{\ell(x) x^a} \widetilde{\mathbb{E}}\left(e^{-\gamma (S_n-x)}1_{\{T_x^+=n, G_n^c\}}\right) =0. 
\end{align}

\noindent
{\bf(Step 2)}. In this step, we  restrict the expectarion in $G_n$ and prove the proposition. 
 For simplicity we only consider large $x$ and  $n\geq 2$ since   $\inf_{x\in (K_1, K)}\ell(x)x^a >0$ for all $K>K_1$ by \eqref{Def-of-K-0} and the case $n=1$ follows by direct calculation. 
 
 \noindent
 {\bf Case 1: $n  \geq (\log x)^2$.}  We have (noticing that this part is only used in the proof of (i)),
\begin{align}\label{e17}
	 \beta^{n/2} \widetilde{\mathbb{E}}\left(e^{-\gamma (S_n-x)}1_{\{T_x^+=n, G_n\}}\right) \leq  \beta^{n/2}\leq  \beta^{(\log x)^2/2}\lesssim_\beta   \ell(x)x^a.
\end{align}

\noindent
{\bf Case 2: $n  <(\log x)^2$.}  Since on $\{T_x^+=n, \max_{1\leq i\leq n} X_i \leq x^{\varepsilon}\}$, we have  $S_n \leq n x^{\varepsilon}\leq (\log x)^2 x^{\varepsilon} <x$ for large $x$. Therefore, $\{T_x^+=n, \max_{1\leq i\leq n} X_i \leq x^{\varepsilon}\}\cap G_n=\emptyset$, which implies that for large $x$, 
\begin{align}\label{e13}
		  & \widetilde{\mathbb{E}}\left(e^{-\gamma (S_n-x)}1_{\{T_x^+=n, G_n\}}\right)  \nonumber\\
		  &= 	 \widetilde{\mathbb{E}}\left(e^{-\gamma (S_n-x)}1_{\{T_x^+=n, S_n\leq x+x^\varepsilon,  \exists   j\leq n, s.t. X_j>x^\varepsilon,\ \max_{i\neq j} |X_i| \leq x^{\varepsilon}  \}}\right) \nonumber\\
		  & = \sum_{j=1}^n  \widetilde{\mathbb{E}}\left(e^{-\gamma (S_n-x)}1_{\{T_x^+=n, S_n\leq x+x^\varepsilon,   X_j>x^\varepsilon,\ \max_{i\neq j} |X_i| \leq x^{\varepsilon}  \}}\right) .
\end{align}
For each $j\leq n$, $\{S_n>x\}\cap \{\max_{i\neq j} |X_i| \leq x^{\varepsilon}   \}\subset \{X_j \geq x- nx^\varepsilon\}\subset \{X_j >x^\varepsilon\}$. Thus,
\begin{align}\label{e16}
	  & \widetilde{\mathbb{E}}\left(e^{-\gamma (S_n-x)}1_{\{T_x^+=n, G_n\}}\right)   = \sum_{j=1}^n  \widetilde{\mathbb{E}}\left(e^{-\gamma (S_n-x)}1_{\{T_x^+=n, S_n\leq x+x^\varepsilon, \ \max_{i\neq j} |X_i| \leq x^{\varepsilon}  \}}\right) .
\end{align}
To prove (i),   we have 
\begin{align}
	  & \widetilde{\mathbb{E}}\left(e^{-\gamma (S_n-x)}1_{\{T_x^+=n, G_n\}}\right)   \leq \sum_{j=1}^n  \widetilde{\mathbb{E}}\left(e^{-\gamma (S_n-x)}1_{\{ x<S_n\leq x+x^\varepsilon, \ \max_{i\neq j} |X_i| \leq x^{\varepsilon}  \}}\right) \nonumber\\
	  & = n \widetilde{\mathbb{E}}\left(1_{\{ \max_{1\leq i\leq n-1} |X_i| \leq x^{\varepsilon}  \}} \widetilde{\mathbb{E}} \left(e^{-\gamma (X-y)}1_{\{ y< X\leq y+x^\varepsilon\}}\right)\Big|_{y=x-S_{n-1}}\right). 
\end{align}
Since $|S_{n-1}|\leq nx^\varepsilon \leq (\log x)^2 x^{\varepsilon} $ on $\{ \max_{1\leq i\leq n-1} |X_i| \leq x^{\varepsilon}  \}$, we have
\begin{align}\label{e18}
	& \frac{ \beta^{n/2}}{\ell(x) x^a} \widetilde{\mathbb{E}}\left(e^{-\gamma (S_n-x)}1_{\{T_x^+=n, G_n\}}\right) \leq 	 \frac{ n\beta^{n/2} }{\ell(x) x^a} \sup_{|y-x|\leq (\log x)^2 x^{\varepsilon}} \widetilde{\mathbb{E}} \left(e^{-\gamma (X-y)}1_{\{ y< X\leq y+x^\varepsilon\}}\right)\nonumber\\
	& \lesssim_\beta  \frac{  1}{\ell(x) x^a}\sup_{|y-x|\leq (\log x)^2 x^{\varepsilon}}  \int_0^{x^\varepsilon} \ell(z+y) (z+y)^a e^{-\gamma z}\mathrm{d}z \nonumber\\
	& \lesssim 1. 
\end{align}
Combining \eqref{e15}, \eqref{e17} and \eqref{e18}, we complete the proof of (i).

Now we are ready to prove (ii). Since $\max_{1\leq k\leq j-1}S_k \leq n x^\varepsilon <x$ for large $x$ on $\{\max_{i\neq j} |X_i| \leq x^{\varepsilon}\}$, we have for each $n\in \mathbb{N}$ and $1\leq j\leq n-1$, 
\begin{align}\label{e19}
	& \widetilde{\mathbb{E}}\left(e^{-\gamma (S_n-x)}1_{\{T_x^+=n, S_n\leq x+x^\varepsilon, \ \max_{i\neq j} |X_i| \leq x^{\varepsilon}  \}}\right) \nonumber\\
	& =  \widetilde{\mathbb{E}}\left(e^{-\gamma (S_n-x)}1_{\{ X_j \leq x-\max_{j+1\leq k\leq n}(S_{k-1}-X_j),\  X_j + (S_n-X_j) \in (x,x+x^\varepsilon], \ \max_{i\neq j} |X_i| \leq x^{\varepsilon}  \}}\right). 
\end{align}
If $j=n$, we  remove the first event $\{ X_j \leq x-\max_{j+1\leq k\leq n}(S_{k-1}-X_j)\}$. 
 Set $Y=x-( S_n-X_j)$, $U=S_n- \max_{j\leq k\leq n-1} S_k$ for $1\leq j \leq n-1$ and $U=\infty$ for $j=n$. According to the independence between $X_j$ and $(U, Y)$, we have for all $1\leq j\leq n$, 
 \begin{align}
 		& \widetilde{\mathbb{E}}\left(e^{-\gamma (S_n-x)}1_{\{T_x^+=n, S_n\leq x+x^\varepsilon, \ \max_{i\neq j} |X_i| \leq x^{\varepsilon}  \}}\right) \nonumber\\
 	& =  \widetilde{\mathbb{E}}\left(e^{-\gamma (X_j-Y)} 1_{\{ X_j \leq U+Y,\  X_j  \in (Y,Y+x^\varepsilon], \ \max_{i\neq j} |X_i| \leq x^{\varepsilon} , U>0 \}}\right)\nonumber\\
 	& = \widetilde{\mathbb{E}}\left(1_{\{\max_{i\neq j} |X_i| \leq x^{\varepsilon} \}} 1_{\{U>0\}}\widetilde{\mathbb{E}}\left(e^{-\gamma (X_j-y)}1_{\{ X_j \leq u+y,\  X_j  \in (y,y+x^\varepsilon] \}}\right)\Big|_{y=Y, u=U}\right).
 \end{align}
We also mention here that on the event $\{\max_{i\neq j} |X_i| \leq x^{\varepsilon} \}$,  $ |Y-x| \leq n x^\varepsilon \leq (\log x)^2 x^\varepsilon$.  Since  the law of $U= S_n- \max_{j\leq k\leq n-1} S_k$ is independent of $x$ and that  uniformly for all $|y-x| \leq (\log x)^2 x^\varepsilon$,
\begin{align}
	 & \widetilde{\mathbb{E}}\left(e^{-\gamma (X_j-y)}1_{\{ X_j \leq u+y,\  X_j  \in (y,y+x^\varepsilon] \}}\right)  = \frac{1}{\mathbb{E}(e^{\gamma X})}\int_0^{\min\{x^{\varepsilon}, u\} } \ell(z+y) (z+y)^a e^{-\lambda z} \mathrm{d}z,
\end{align}
 taking $x\to+\infty$ in the above limit implies that uniformly on $|y-x| \leq (\log x)^2 x^\varepsilon$,
\begin{align}
	& \frac{ 1 }{\ell(x)x^a}\widetilde{\mathbb{E}}\left(e^{-\gamma (X_j-y)}1_{\{ X_j \leq u+y,\  X_j  \in (y,y+x^\varepsilon] \}}\right)= \frac{1}{\mathbb{E}(e^{\gamma X})}\int_0^{\min\{ x^{\varepsilon}, u\} }\frac{ \ell(z+y)}{\ell (x)} \left(\frac{z+y}{x}\right)^a e^{-\lambda z} \mathrm{d}z \nonumber\\
	 & \stackrel{x\to+\infty}{\longrightarrow} \frac{1}{\mathbb{E}(e^{\gamma X})} \int_0^ue^{-\lambda z}\mathrm{d}z.
\end{align}
Noticing that $S_n- \max_{j\leq k\leq n-1} S_k$ is equal in law to $\min_{1\leq i\leq n-j} S_i$, we conclude that 
\begin{align}
	& \lim_{x\to+\infty} \frac{1}{\ell(x)x^a}\widetilde{\mathbb{E}}\left(e^{-\gamma (S_n-x)}1_{\{T_x^+=n, G_n\}}\right)   \nonumber\\
	&= \frac{1}{\mathbb{E}(e^{\gamma X})} \left(\frac{1}{\gamma}+ \sum_{j=1}^{n-1}\widetilde{\mathbb{E}}\left(1_{\{\min_{1\leq i\leq j} S_i>0 \}}\int_0^{\min_{1\leq i\leq j} S_i} e^{-\gamma z}\mathrm{d} z\right) \right).
\end{align}
Therefore, (ii) follows from \eqref{e15} and the above display. We are done. 

\hfill$\Box$

\noindent
\textbf{Proof of Proposition \ref{prop2}  for $b\in (0,1)$:}  The proof is similar to the case $b=0$ with more delicate details. Since the proof for $n=1$ follows by standard calculation,   we consider the case $n\geq 2$.  Recall that $\varepsilon_0= \frac{1}{2}\min\{ b, 1-b\}$. 

\noindent
{\bf(Step 1)}.  In this step, we define a good event $I_n$ and show that the expectation on $I_n^c$ is negligible (see \eqref{e19-4} below).
Combining Proposition \ref{prop1} (i) and Lemma \ref{lem1},  for all $x>K_0$ and $n\in \mathbb{N}$,
\begin{align}\label{e19-1}
		& \frac{  \beta^{n/2} e^{\lambda x^b}}{\ell(x) x^a } \widetilde{\mathbb{E}}\left(e^{-\gamma (S_n-x)}1_{\{T_x^+=n, S_n> x+ x^{\varepsilon_0}\}}\right)  \leq 	\frac{  \beta^{n/2} e^{\lambda x^b-\gamma x^{\varepsilon_0}}}{\ell(x) x^a   }\widetilde{\mathbb{P}}\left(T_x^+=n\right)  \nonumber\\
		&\lesssim_\beta 	\frac{ e^{\lambda x^b-\gamma x^{\varepsilon_0}} }{\ell(x) x^a } \widetilde{\mathbb{P}}\left(X>x\right) \lesssim e^{-\gamma x^{\varepsilon_0} }x^{1-b}.
\end{align}
 Define 
\begin{align}\label{def-of-H}
	H_n:= \{\mbox{there exists at most one }i \leq n \mbox{ such that } X_i>x-2x^{\varepsilon_0}  \}.
\end{align}
Combining  \eqref{e9-2} (with $m$ replaced by $\beta$) and Lemma \ref{lem1}, we have for all $x>K_0$ and $n\in \mathbb{N}$, 
\begin{align}\label{e19-2}
		& \frac{  \beta^{n/2}e^{\lambda x^b} }{\ell(x) x^a } \widetilde{\mathbb{E}}\left(e^{-\gamma (S_n-x)}1_{\{T_x^+=n, H_n^c\}}\right)  \leq 	\frac{  \beta^{n/2} e^{\lambda x^b}}{\ell(x) x^a } \widetilde{\mathbb{P}}\left(H_n^c\right)  \lesssim_\beta \frac{  e^{\lambda x^b} }{\ell(x) x^a } \widetilde{\mathbb{P}}\left(X>x\right)^2 \nonumber\\
		& = \frac{   \widetilde{\ell}^2(x) x^{2(a+1-b)} e^{-\lambda x^b}}{\ell(x) x^a  } \lesssim \ell(x) x^{a+2(1-b)} e^{-\lambda x^b} \nonumber\\
		&\lesssim e^{-\lambda x^b /2}, 
\end{align}
where in the last inequality we inequality $\ell(x) x^{a+2(1-b)+1}\lesssim e^{\lambda x^b /2}$ for all $x>K_0$. 
Define 
\begin{align}\label{def-of-I-n}
		I_n:= H_n\cap \{S_n\leq x+x^{\varepsilon_0}\} ,
\end{align}
then it follows from \eqref{e19-1} and  \eqref{e19-2}  that 
\begin{align}\label{e19-4}
	\lim_{x\to+\infty} \sup_{n\in \mathbb{N}}\frac{ \beta^{n/2}e^{\lambda x^b} }{\ell(x) x^a } \widetilde{\mathbb{E}}\left(e^{-\gamma (S_n-x)}1_{\{T_x^+=n, I_n^c\}}\right)=0. 
\end{align}

\noindent
{\bf(Step 2)}.  In this step, we restrict the expectation in $I_n$ for   $n\geq (\log x)^{5/2}$.

\noindent
{\bf Case 1: $n\geq x^{(1+b)/2}$.}
In this case,  $n\log (1/\beta) /2 >2\lambda x^b$ for large $x$, which implies that
\begin{align}\label{e20-1}
	\frac{ \beta^{n/2} e^{\lambda x^b}}{\ell(x) x^a }\widetilde{\mathbb{E}}\left(e^{-\gamma (S_n-x)}1_{\{T_x^+=n, I_n\}}\right) \leq 	\frac{ \beta^{n/2} e^{\lambda x^b}}{\ell(x) x^a } \leq \frac{ e^{-\lambda x^b}}{\ell(x) x^a } \lesssim 1. 
\end{align}

\noindent
{\bf Case 2: $(\log x)^{5/2} \leq n < x^{(1+b)/2}$.} It follows from \eqref{e8-1} that 
\begin{align}
		& \frac{e^{\lambda x^b} }{\ell(x) x^a }   \widetilde{\mathbb{E}}\left(e^{-\gamma (S_n-x)}1_{\{T_x^+=n, I_n,\max_{1\leq i\leq n} X_i \leq x-2x^{\varepsilon_0}\}}\right) \nonumber\\
		&\leq \frac{e^{\lambda x^b} }{\ell(x) x^a }   \widetilde{\mathbb{P}}\left( S_n >x, \max_{1\leq i\leq n} X_i \leq x-2x^{\varepsilon_0} \right)\nonumber\\
		& \lesssim   \frac{e^{(\log x)^2+\left(C_*\theta^2(x)+\widetilde{\mu} \theta(x) \right)n} }{\ell(x) x^a }  \nonumber\\
		&\lesssim  e^{\left(n^{-1/5}+ (|a|+1)n^{-3/5}+ C_*\theta^2(x) +\widetilde{\mu} \theta(x) \right)n}  ,
\end{align}
where $\widetilde{\mu}:= \widetilde{\mathbb{E}}(X)$ and the last inequality follows from the fact that $\ell (x) x^{a}\gtrsim x^{-|a|-1}$ and $\log x < n^{2/5}$.  Taking $x$ sufficiently large such that 
$n^{-1/5}+ (|a|+1)n^{-3/5}+ C_*\theta^2(x)+\widetilde{\mu} \theta(x)\leq -\frac{1}{3}\log \beta$ for all $n\geq  (\log x)^{5/2}$,  we deduce that 
\begin{align}
		& \frac{ \beta^{n/2}e^{\lambda x^b} }{\ell(x) x^a }  \widetilde{\mathbb{E}}\left(e^{-\gamma (S_n-x)}1_{\{T_x^+=n, I_n,\max_{1\leq i\leq n} X_i \leq x-2x^{\varepsilon_0}\}}\right) \nonumber\\
		& \lesssim \beta^{n/2} x^{1-b}e^{ -n (\log \beta)/3} =\beta^{n/6}x^{1-b}  \leq \beta^{(\log x)^{5/2}/6}x^{1-b} \nonumber\\
		& \lesssim_\beta 1.
\end{align}
Therefore,  for large $x$ and $(\log x)^{5/2} \leq n < x^{(1+b)/2}$, it holds that
\begin{align}
	& \frac{ \beta^{n/2} e^{\lambda x^b}}{\ell(x) x^a }  \widetilde{\mathbb{E}}\left(e^{-\gamma (S_n-x)}1_{\{T_x^+=n, I_n\}}\right) \nonumber\\
	& \lesssim_\beta 1+  \frac{ \beta^{n/2} e^{\lambda x^b}}{\ell(x) x^a } \widetilde{\mathbb{E}}\left(e^{-\gamma (S_n-x)}1_{\{T_x^+=n, I_n, \ \exists !\ 1\leq  j\leq n, X_j >x-2x^{\varepsilon_0}\}}\right) \nonumber\\
	& \leq 1+  \frac{ n \beta^{n/2} e^{\lambda x^b}}{\ell(x) x^a }\widetilde{\mathbb{E}}\left(e^{-\gamma (S_n-x)}1_{\{S_n\in (x, x+x^{\varepsilon_0}), X_n >x-2x^{\varepsilon_0}  \}}\right).
\end{align}
Since $\{X_n> x-2x^{\varepsilon_0}\}\cap  \{S_n<x+x^{\varepsilon_0}\}\subset \{S_{n-1} <3x^{\varepsilon_0}\}$, we obtain that 
\begin{align}\label{e21}
		& \frac{ \beta^{n/2} e^{\lambda x^b}}{\ell(x) x^a }  \widetilde{\mathbb{E}}\left(e^{-\gamma (S_n-x)}1_{\{T_x^+=n, I_n\}}\right)  \nonumber\\
	& \lesssim_\beta  1+  \frac{ n \beta^{n/2} e^{\lambda x^b}}{\ell(x) x^a } \sup_{y>x-3x^{\varepsilon_0}} \widetilde{\mathbb{E}}\left(e^{-\gamma (X-y)}1_{\{X\in (y, y+x^{\varepsilon_0}) \}}\right) \nonumber\\
	& \lesssim_\beta 1+  \frac{ e^{\lambda x^b}}{ \ell(x) x^a } \sup_{y>x-3x^{\varepsilon_0}}  \int_0^{3x^{\varepsilon_0}}e^{-\gamma z} \ell(z+y)(z+y)^a e^{-\lambda (z+y)^b} \mathrm{d}z. 
\end{align}
Using the inequality $\ell(z+y) \lesssim \ell(x)$ for any $z<3x^{\varepsilon_0}, x-3x^{\varepsilon_0}<y<2x$ and $\ell(z+y)\lesssim z+y \lesssim y$ for any $z<3x^{\varepsilon_0}, y\geq 2x$, we have 
\begin{align}\label{e22-2}
	&  \frac{ e^{\lambda x^b}}{ \ell(x) x^a }  \sup_{y>2x} \int_0^{3x^{\varepsilon_0}} e^{-\gamma z} \ell(z+y) (z+y)^{a} e^{-\lambda (z+y)^b} \mathrm{d}z\nonumber\\
	& \lesssim  \frac{ e^{\lambda x^b}}{ \ell(x) x^a } \sup_{y>2x} \int_0^{3x^{\varepsilon_0}} e^{-\gamma z}   y^{|a|+1} e^{-\lambda y^b} \mathrm{d}z\nonumber\\
	&\lesssim  \frac{ e^{\lambda x^b}}{ \ell(x) x^a }  \sup_{y>2x}    y^{|a|+1} e^{-\lambda y^b}  \lesssim \frac{e^{\lambda x^b}}{\ell(x) x^a }   (2x)^{|a|+1} e^{-\lambda (2x)^b}  \lesssim 1
\end{align}
and 
\begin{align}\label{e22-3}
	&  \frac{ e^{\lambda x^b}}{ \ell(x) x^a } \sup_{x-3x^{\varepsilon_0}<y<2x} \int_0^{3x^{\varepsilon_0}} e^{-\gamma z} \ell(z+y) (z+y)^{a} e^{-\lambda (z+y)^b} \mathrm{d}z\nonumber\\
	&\lesssim  \frac{ e^{\lambda x^b}}{ \ell(x) x^a } \sup_{x-3x^{\varepsilon_0}<y<2x} \int_0^{3x^{\varepsilon_0}} e^{-\gamma z} \ell(x) x^{a} e^{-\lambda (z+y)^b} \mathrm{d}z \nonumber\\
	& \leq e^{\lambda x^b-\lambda (x-3x^{\varepsilon_0})^b}    \int_0^{\infty} e^{-\gamma z} \mathrm{d}z \lesssim 1,
\end{align}
where in the last inequality we used the fact that $\lim_{x\to+\infty} ((x-3x^{\varepsilon_0})^b-x^b)=0$.
Combining \eqref{e21}, \eqref{e22-2} and \eqref{e22-3}, we obtain that 
\begin{align}\label{e20-2}
		& \frac{ \beta^{n/2} e^{\lambda x^b}}{\ell(x) x^a }   \widetilde{\mathbb{E}}\left(e^{-\gamma (S_n-x)}1_{\{T_x^+=n, I_n\}}\right) \lesssim_\beta 1.
\end{align}
In conclusion, combining \eqref{e20-1} and \eqref{e20-2}, we get that 
\begin{align}\label{e20}
	\sup_{n \in \mathbb{N}, n \geq  (\log x)^{5/2}} \frac{ \beta^{n/2} e^{\lambda x^b}}{\ell(x) x^a }   \widetilde{\mathbb{E}}\left(e^{-\gamma (S_n-x)}1_{\{T_x^+=n, I_n\}}\right)   \lesssim_\beta  1.
\end{align}

\noindent
{\bf(Step 3)}. In this step, we restrict the expectation in $I_n$ for   $n<(\log x)^{5/2}$ and prove   (i). 
Noticing that $\{S_n >x\}$ implies the existence of some $1\leq j\leq n$ such that $X_j> x/n > x/(\log x)^{5/2}$. Therefore, when $x$ is large enough, we have $\{T_x^+=n\}\cap I_n \subset I_n\cap \{\exists\ 1\leq j \leq n,\ X_j \geq 2x^{\varepsilon_0}\} =  \{\exists\  1\leq j \leq n,\ 2x^{\varepsilon_0}\leq X_j \leq x-2x^{\varepsilon_0}\} \cup \{\exists\ 1\leq j \leq n,\  X_j > x-2x^{\varepsilon_0},\ \max_{i\neq j}X_i< 2x^{\varepsilon_0}\}  $. Based on this observation, we have the following decomposition: 
\begin{align}\label{decomposition-I-n}
	& \widetilde{\mathbb{E}}\left(e^{-\gamma (S_n-x)}1_{\{T_x^+=n, I_n\}}\right)\nonumber\\
	&= \widetilde{\mathbb{E}}\left(e^{-\gamma (S_n-x)}1_{\{T_x^+=n, I_n, \exists\ 1\leq j \leq n,\ 2x^{\varepsilon_0}\leq X_j \leq x-2 x^{\varepsilon_0} \}}\right) \nonumber\\
	&\qquad + \widetilde{\mathbb{E}}\left(e^{-\gamma (S_n-x)}1_{\{T_x^+=n, I_n, \exists \ 1\leq j \leq n,\  X_j > x-2 x^{\varepsilon_0}, \max_{i\neq j} X_i <2x^{\varepsilon_0} \}}\right) \nonumber\\
	&=: A_1+A_2. 
\end{align}

\noindent
{\bf Estimate for $A_1$.}
 Noticing that for $1\leq j\leq n$, $y=x-X_j\geq 2x^{\varepsilon_0}$ on the event $\{X_j\leq x-2x^{\varepsilon_0}\}$, we may take $x$ sufficiently large such that $(\log x)^{5/2}\leq (\log (2x^{\varepsilon_0}))^3$. Therefore, by  \eqref{e6}, when $x$ is large enough, for all $n< (\log x)^{5/2}$ and $y\geq 2x^{\varepsilon_0}$, $\widetilde{\mathbb{P}}(S_{n}-X_j>y) \leq 2 (n-1) \widetilde{\mathbb{P}}(X>y)$, which implies that  
\begin{align}
	& \frac{ e^{\lambda x^b}}{\ell(x) x^a }  A_1 \leq \frac{e^{\lambda x^b}}{\ell(x) x^a }  \sum_{j=1}^n \widetilde{\mathbb{P}}\left(S_{n}-X_j> x- X_j, 2x^{\varepsilon_0}\leq  X_j \leq x-2 x^{\varepsilon_0} \right) \nonumber\\
	& = \frac{ ne^{\lambda x^b}}{\ell(x) x^a } \widetilde{\mathbb{E}}\left(1_{\{ 2x^{\varepsilon_0}\leq X_n \leq x-2 x^{\varepsilon_0}\}} \widetilde{\mathbb{P}} (S_{n-1}\geq y)|_{y=x-X_n} \right)\nonumber\\
	& \leq \frac{ 2n(n-1) e^{\lambda x^b}}{\ell(x) x^a } \widetilde{\mathbb{E}}\left(1_{\{ 2x^{\varepsilon_0}\leq  X_n \leq x-2 x^{\varepsilon_0}\}} \widetilde{\mathbb{P}} (X\geq y)|_{y=x-X_n} \right).
\end{align}
 Combining   Lemma \ref{lem1} and \eqref{e10-1} (with $|a|+1$ replaced by $|a|+2-b$) and the fact that $\ell(x) x^{a}\gtrsim x^{-|a|-1}$, we deduce that 
\begin{align}\label{e21-4}
	& \sup_{n< (\log x)^{5/2}}\frac{ \beta^{n/2} e^{\lambda x^b}}{\ell(x) x^a } A_1 \nonumber\\
	&\lesssim_\beta \sup_{n< (\log x)^{5/2}} \frac{ e^{\lambda x^b}}{\ell(x) x^a } \widetilde{\mathbb{E}}\left(1_{\{ 2x^{\varepsilon_0}\leq  X_n \leq x-2 x^{\varepsilon_0}\}} |x-X_n|^{|a|+2-b} e^{-\lambda (x-X_n)^{b}} \right)\nonumber\\
	&\lesssim \frac{  x^{2(|a|+2-b) +1} e^{-\lambda (2x^{\varepsilon_0})^{b}} }{\ell(x) x^a } \lesssim   x^{3|a|+6-2b}   e^{-\lambda (2x^{\varepsilon_0})^{b}}\stackrel{x\to+\infty}{\longrightarrow}0.
\end{align}

\noindent
{\bf Estimate for $A_2$.}
We have the upper bound
\begin{align} 
	& \frac{  e^{\lambda x^b}}{\ell(x) x^a } A_2  \leq  \frac{n e^{\lambda x^b}}{\ell(x) x^a } \widetilde{\mathbb{E}}\left(e^{-\gamma (S_n-x)}1_{\{S_n\in (x, x+x^{\varepsilon_0}),   X_n > x-2 x^{\varepsilon_0}\}}\right) .
\end{align}
Since $\{X_n>x-2x^{\varepsilon_0}\}\cap \{S_n < x+x^{\varepsilon_0}\}\subset \{S_{n-1}<3x^{\varepsilon_0}\}$, we deduce that 
\begin{align}\label{e21-3}
	& \frac{ \beta^{n/2} e^{\lambda x^b}}{\ell(x) x^a }  A_2 \leq  \frac{n \beta^{n/2}e^{\lambda x^b} }{\ell(x) x^a }\widetilde{\mathbb{E}}\left(1_{\{S_{n-1}<3x^{\varepsilon_0}\}} \widetilde{\mathbb{E}} \left(e^{-\gamma (X-y)}1_{\{X\in (y, y+x^{\varepsilon_0})  \}}\right)\big|_{y=x-X_n}\right)  \nonumber\\
	& \leq  \frac{n \beta^{n/2} e^{\lambda x^b}}{\ell(x) x^a }   \sup_{y>x-3x^{\varepsilon_0}} \int_0^{3x^{\varepsilon_0}} e^{-\gamma z} \ell(z+y) (z+y)^{a} e^{-\lambda (z+y)^b} \mathrm{d}z\nonumber\\
	&\lesssim_\beta 1,
\end{align}
where in the last inequality we used \eqref{e22-2} and \eqref{e22-3} . 
Combining \eqref{decomposition-I-n}, \eqref{e21-4} and \eqref{e21-3}, we get that 
\begin{align}\label{e21-5}
	\sup_{n\in \mathbb{N}, n< (\log x)^{5/2}}\frac{ \beta^{n/2}e^{\lambda x^b} }{\ell(x) x^a } \widetilde{\mathbb{E}}\left(e^{-\gamma (S_n-x)}1_{\{T_x^+=n, I_n\}}\right) \lesssim_\beta 1.
\end{align}
Now (i) follows from \eqref{e19-4}, \eqref{e20} and \eqref{e21-5}. 

\noindent
{\bf(Step 4)}. In this step, we prove (ii). For each fixed $n\in \mathbb{N}$, we assume that $x$ is large enough such that $n<(\log x)^{5/2}$.  Combining \eqref{e19-4}, \eqref{decomposition-I-n} and \eqref{e21-4}, we have 
\begin{align}\label{e22}
	& \lim_{x\to+\infty} \frac{  e^{\lambda x^b} }{\ell(x) x^a } \widetilde{\mathbb{E}}\left(e^{-\gamma (S_n-x)}1_{\{T_x^+=n\}}\right) = \lim_{x\to+\infty} \frac{  e^{\lambda x^b} }{\ell(x) x^a } \widetilde{\mathbb{E}}\left(e^{-\gamma (S_n-x)}1_{\{T_x^+=n, I_n \}}\right) \nonumber\\
	& =  \lim_{x\to+\infty} \frac{  e^{\lambda x^b}}{\ell(x) x^a } (A_1+ A_2)=  \lim_{x\to+\infty} \frac{  e^{\lambda x^b} }{\ell(x) x^a } A_2  \nonumber\\
	& = \sum_{j=1}^n  \lim_{x\to+\infty} \frac{ e^{\lambda x^b}}{\ell(x) x^a } \widetilde{\mathbb{E}}\left(e^{-\gamma (S_n-x)}1_{\{T_x^+=n, I_n , \  X_j>x-2x^{\varepsilon_0},\ \max_{i\neq j} X_i< 2x^{\varepsilon_0}\}}\right).
\end{align}
The rest part of the proof is exactly the same as $b=0$. 
Since  $\widetilde{\mathbb{E}}(|X|^k)<\infty$ for all $k>1$ by Lemma \ref{lem1},  set $k_0:= 1/\varepsilon_0$, then $k_0\varepsilon_0 > 1-b$ and for each $1\leq j\leq n$, 
\begin{align}\label{e27-1}
	 &\limsup_{x\to+\infty}  \frac{ e^{\lambda x^b}}{\ell(x) x^a } \widetilde{\mathbb{E}}\left(e^{-\gamma (S_n-x)}1_{\{T_x^+=n, I_n , \  X_j>x-2x^{\varepsilon_0},\ |S_n- S_j|>x^{\varepsilon_0}\}}\right) \nonumber\\
	& \leq \lim_{x\to+\infty}  \frac{  e^{\lambda x^b} }{\ell(x) x^a } \widetilde{\mathbb{P}}\left( X_j>x-2x^{\varepsilon_0}\right) \widetilde{\mathbb{P}} \left(|S_n-S_j|>x^{\varepsilon_0}\right)\nonumber\\
	& \leq  \lim_{x\to+\infty}  \frac{  (n-1) e^{\lambda x^b}}{\ell(x) x^a } \widetilde{\mathbb{P}}\left( X>x-2x^{\varepsilon_0}\right) \widetilde{\mathbb{P}} \left(|X|>x^{\varepsilon_0}/n \right)  \nonumber\\
	& \leq  \frac{n-1}{\lambda b \mathbb{E}(e^{\gamma X})} \lim_{x\to+\infty} x^{1-b} \frac{\widetilde{\mathbb{E}}(|X|^{k_0})}{(x^{\varepsilon_0}/n)^{k_0}}=0.
\end{align}
Therefore, plugging this back to \eqref{e22} and noticing that $\{T_x^+=n\}\cap\{|S_n-X_j|\leq x^{\varepsilon_0}\}\subset \{X_j>x-2x^{\varepsilon_0}\}$,  we obtain
\begin{align}\label{e22-1}
	& \lim_{x\to+\infty} \frac{  e^{\lambda x^b} }{\ell(x) x^a } \widetilde{\mathbb{E}}\left(e^{-\gamma (S_n-x)}1_{\{T_x^+=n\}}\right)   \nonumber\\
	& =\sum_{j=1}^n  \lim_{x\to+\infty} \frac{  e^{\lambda x^b} }{\ell(x) x^a } \widetilde{\mathbb{E}}\left(e^{-\gamma (S_n-x)}1_{\{T_x^+=n, I_n , \  |S_n-X_j|\leq x^{\varepsilon_0},\ \max_{i\neq j} X_i< 2x^{\varepsilon_0}\}}\right).
\end{align}
Since $\max_{1\leq k\leq j-1} S_k\leq 2n x^{\varepsilon_0}< x$ on $\{\max_{i\neq j} X_i<2x^{\varepsilon_0}\}$ for large $x$, we have for $1\leq j\leq n$,
\begin{align}
	& \widetilde{\mathbb{E}}\left(e^{-\gamma (S_n-x)}1_{\{T_x^+=n, I_n , \  |S_n-X_j|\leq x^{\varepsilon_0},\ \max_{i\neq j} X_i< 2x^{\varepsilon_0}\}}\right)\nonumber\\
	& = \widetilde{\mathbb{E}}\left(e^{-\gamma (S_n-x)}1_{\{T_x^+=n, I_n , \  |S_n-X_j|\leq x^{\varepsilon_0},\ \max_{i\neq j} X_i< 2x^{\varepsilon_0}\}}\right)\nonumber\\
	& = \widetilde{\mathbb{E}}\left( 1_{\{ \max_{i\neq j} X_i< 2x^{\varepsilon_0}, |x-Y| \leq x^{\varepsilon_0}, U>0\}} \widetilde{\mathbb{E}}\left(e^{-\gamma (X_j-y)}1_{\{ X_j \leq u+y, X_j \in (y, y+x^\varepsilon_0)\}}\right)\big|_{y=Y, u=U}\right),
\end{align}
where $Y= x-(S_n-X_j)$, $U=S_n- \max_{j\leq k\leq n-1} S_k$ for $1\leq j\leq n-1$ and $U=\infty$ for $j=n$ and that the last equality follows from the independence between $(Y, U)$ and $X_j$.
Since 
\[
\ell(z+y)/\ell(x) \to 1,\quad  (z+y)/x \to 1 \quad \mbox{and}\quad (z+y)^b -x^b \to 0
\]
uniformly for $z\in (0, x^{\varepsilon_0})$ and $|y-x|\leq  x^{\varepsilon_0}$ as $x\to+\infty$, we conclude that 
\begin{align}
	& \lim_{x\to+\infty} \frac{ e^{\lambda x^b} }{\ell(x) x^a } \widetilde{\mathbb{E}}\left(e^{-\gamma (S_n-x)}1_{\{T_x^+=n, I_n , \  |S_n-X_j| \leq x^{\varepsilon_0},\ \max_{i\neq j} X_i< 2x^{\varepsilon_0}\}}\right) \nonumber\\
	&= \lim_{x\to+\infty} \frac{  \widetilde{\mathbb{E}}\left( 1_{\{ \max_{i\neq j} X_i< 2x^{\varepsilon_0},  |x-Y|\leq x^{\varepsilon_0}, U>0\}} \int_{0}^{ \min\{x^{\varepsilon_0}, U\} }  e^{-\gamma z} \ell(z+Y) (z+Y)^a e^{-\lambda (z+Y)^b}\mathrm{d}z\right)}{\ell(x) x^a e^{-\lambda x^b}} \nonumber\\
	& = \lim_{x\to+\infty}    \widetilde{\mathbb{E}}\left( 1_{\{ \max_{i\neq j} X_i< 2x^{\varepsilon_0},  |S_n-X_j|\leq x^{\varepsilon_0}, U>0\}}  \int_{0}^{ \min\{x^{\varepsilon_0}, U\} }  e^{-\gamma z}   \mathrm{d}z\right) \nonumber\\
	& = \widetilde{\mathbb{E}}\left( 1_{\{ U>0\}}  \int_{0}^{ U }  e^{-\gamma z}   \mathrm{d}z\right) .
\end{align}
Plugging this back to \eqref{e22-1} completes the proof of (ii).

\hfill$\Box$

We  also need the following important proposition in the proof of Theorem \ref{thm2}. 

\begin{prop}\label{prop3}
	Assume {\bf(H2)}.  Let $\delta, \beta \in (0,1)$ be fixed.
	
	(i) For all $k\in \mathbb{N}$ with $k\delta\geq K_0$ and $x>K_0$, 
	\begin{align}
		&\frac{e^{\lambda x^b}}{\ell(x) x^a } \sum_{n=1}^\infty \beta^{n}\widetilde{\mathbb{P}}\left(\max_{1\leq j \leq n} S_j \leq x, x-S_n\in [(k-1)\delta, k\delta) \right) \lesssim_\beta  \frac{e^{\lambda (k\delta)^b}}{\ell(k\delta) (k\delta)^a }.
	\end{align}
	(ii) For each $0\leq p< q$, 
	\begin{align}
		& \lim_{x\to+\infty} \frac{e^{\lambda x^b}}{\ell(x) x^a } \sum_{n=1}^\infty \beta^{n}\widetilde{\mathbb{P}}\left(\max_{1\leq j \leq n} S_j \leq x, x-S_n\in [p, q) \right) \nonumber\\
		&=\frac{1}{\mathbb{E}(e^{\gamma X})}\int_p^q \sum_{j=0}^\infty \frac{\beta^{j+1}}{1-\beta} \widetilde{\mathbb{P}}\left(z> -\min_{0\leq k\leq j} S_k \right)\mathrm{d} z. 
	\end{align}
\end{prop}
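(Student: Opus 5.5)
The plan is to use time reversal together with the one-big-jump principle. Under $\widetilde{\mathbb P}$ the walk has a light left tail (by \eqref{left-tail-X}) and a stretched exponential right tail with exponent $a+1-b$ (by Lemma \ref{lem1}). Moreover $\widetilde{\mathbb E}(X)<\infty$, and $\beta^n$ damps large $n$.

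\textbf{Reduction by time reversal.} Write $X_i' := X_{n+1-i}$ and $S'_k:=\sum_{i\le k}X'_i$. Then $\max_{j\le n}S_j\le x$ together with $x-S_n\in[p,q)$ becomes
\[
S'_n\in(x-q,x-p],\qquad \min_{0\le k\le n}(S'_n-S'_k)\ge S'_n-x .
\]
In words, the reversed walk ends near $x$ and never dips too far below its final value. For fixed $n$, a one-big-jump argument in the spirit of the proof of Proposition \ref{prop2} (ii) applies. We discard the events with two jumps larger than $x^{\varepsilon_0}$, or any $|X_i|>x^{\varepsilon_0}$ besides the big one; the errors are controlled by Lemma \ref{lem1} and \eqref{e9-2}.

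\textbf{Locating the big jump.} Suppose the big jump is $X_{i}$ in the original order. The constraint $\max_j S_j\le x$ must hold before and after it. After the jump the position is close to $x$, with only small steps remaining. So if $j:=n-i$ steps follow the jump, the condition reads: starting at $S_i = x - z - (S_n-S_i)$, the remaining $j$ steps never exceed $x$. With $z=x-S_n\in[p,q)$, this becomes $z> -\min_{0\le k\le j}\widehat S_k$, where $\widehat S$ is an independent copy of the walk after the jump. Meanwhile the big jump's density near $x$ is $\frac{\ell(x)x^ae^{-\lambda x^b}}{\mathbb E(e^{\gamma X})}(1+o(1))$ by \eqref{density-X-tildeP} and \eqref{proper-of-tail}. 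The $i-1$ steps before the jump are unconstrained asymptotically, since $S_{i-1}$ is $o(x)$.

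\textbf{Assembling the limit and the domination.} Integrating the density over the window in $z$ and summing $\beta^n$ over $n=i+j$ with $i\ge1$, $j\ge0$, the factor $\sum_{i\ge1}\beta^{i-1}=1/(1-\beta)$ times $\beta^{j+1}$ yields exactly the formula in (ii). To exchange the limit with $\sum_n$, we need a uniform bound $\beta^{n/2}\times(\cdot)\lesssim \ell(x)x^ae^{-\lambda x^b}$ for windows of length $\delta$. Part (i) provides this, and proving it is the main obstacle. It requires a local (window) large deviation estimate for $\widetilde{\mathbb P}(S_n\in x-[(k-1)\delta,k\delta))$ valid for all $n$ and all $k$, including $k$ comparable to $x$. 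For that we would follow Proposition \ref{prop1} (i), splitting the range of $n$ into $n<(\log x)^3$, $(\log x)^3\le n<x^{(1+b)/2}$, and larger $n$, where $\beta^{n/2}$ absorbs everything.

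\textbf{The hard case in (i).} In the middle range, the no-big-jump part is handled by the exponential Chebyshev bound of Lemma \ref{lem2}. The one-big-jump part is handled by conditioning on the other steps and using the density of the jump. That density at $x-S_{n-1}-z$ is bounded by its value at $k\delta$-scale quantities. The case $b=0$ is easier: Potter's bounds give a density ratio $\lesssim \ell(k\delta)(k\delta)^a$ times $\ell(x)x^a$ up to polynomial factors, which is summable in $\beta^n$. The case $b\in(0,1)$ needs the subadditivity $x^b\le (x-y)^b+y^b$ so that $e^{-\lambda(\cdot)}$ factors correctly. Given its delicacy, I would defer this case to a separate appendix-style argument.
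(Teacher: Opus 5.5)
Your architecture is the paper's: one big jump, reversal of the post-jump segment to get $\min_{0\le k\le j}S_k$, the bookkeeping $\beta^{i+j}=\beta^{i-1}\beta^{j+1}$, Lemma~\ref{lem2} for the no-big-jump part at intermediate $n$, and $\beta^{n/2}$ for $n\ge x^{(1+b)/2}$. For (ii), the paper restricts to $n<(\log x)^{5/2}$ and shows the rest is $o(\ell(x)x^ae^{-\lambda x^b})$, instead of dominating term by term; the two routes are equivalent. Two corrections there. First, part (i) bounds the whole $n$-sum, so it cannot supply the per-$n$ domination you attribute to it; in the paper, (i) is used only for dominated convergence in $k$ in the proof of Theorem~\ref{thm2}. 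Second, your displayed reversed condition $\min_k(S'_n-S'_k)\ge S'_n-x$ actually says $\max_kS'_k\le x$. The correct condition is $\min_{0\le k\le n}S'_k\ge S'_n-x$, which is what your verbal description and later computation use.

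The genuine gap is part (i) for $b\in(0,1)$. This is the substance of the proposition, you defer it, and the mechanism you name (subadditivity) cannot close it on its own. Subadditivity $x^b\le y^b+(x-y)^b$ turns $e^{\lambda x^b-\lambda(x-k\delta)^b}$ into exactly $e^{\lambda(k\delta)^b}$, with nothing to spare. Yet every estimate carries polynomial prefactors in $x$:
the factor $x^{1-b}$ between tail and density under $\widetilde{\mathbb P}$;
the loss in $\ell(x)x^a\gtrsim x^{-|a|-1}$;
the ratio $\ell(x-k\delta)(x-k\delta)^a\ell(k\delta)(k\delta)^a/(\ell(x)x^a)$;
the number of summands.
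Meanwhile the target $e^{\lambda(k\delta)^b}/(\ell(k\delta)(k\delta)^a)$ is $O(1)$ for bounded $k\delta$. For the same reason, you cannot discard configurations with two jumps above $x^{\varepsilon_0}$ via \eqref{e9-2}. That estimate only concerns two jumps above $x-2x^{\varepsilon_0}$, whereas $\widetilde{\mathbb P}(X>x^{\varepsilon_0})^2\approx e^{-2\lambda x^{b\varepsilon_0}}$ is enormous compared with $e^{-\lambda x^b}$.

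The missing idea is the strict-concavity gain. Because $y\mapsto y^b+(x-y)^b$ is concave and $x^{\varepsilon_0}=o(x^{1-b})$, one has $y^b+(x-y)^b\ge x^b+(2x^{\varepsilon_0})^b-1$ for $y\in[2x^{\varepsilon_0},x-2x^{\varepsilon_0}]$. So splitting the required displacement into two pieces, each of size at least $2x^{\varepsilon_0}$, costs an extra factor $e^{-\lambda(2x^{\varepsilon_0})^b}$, which absorbs every polynomial factor. This drives \eqref{e10-1} and the bounds for $A_1$, $T_{42}$, $T_{42}'$ and $Q_{42}''$. Subadditivity enters only on top of this gain, to extract $e^{\lambda(k\delta)^b}$.

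The gain must be combined with a case split in $k\delta$. For $k\delta\lesssim x^{\varepsilon_0}$, use $x^b-(x-O(x^{\varepsilon_0}))^b\to0$ together with the fact that $e^{\lambda y^b}/(\ell(y)y^a)$ is bounded below on $[K_0,\infty)$. For $k\delta>x-2(\log x)^{5/2}x^{\varepsilon_0}$, the trivial bound $\sum_n\beta^n$ already suffices. Finally, the small-$n$ cutoff must lie between $C(\log x)^2$ and $\varepsilon_0^3(\log x)^3$, not at $(\log x)^3$; the paper uses $(\log x)^{5/2}$. The upper limit comes from the intermediate-jump terms, which apply \eqref{e6} to $\widetilde{\mathbb P}(S_{n-1}>y)$ at scales $y\ge x^{\varepsilon_0}$. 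The lower limit comes from Lemma~\ref{lem2}, which needs $n\gg(\log x)^2$ to beat its $e^{(\log x)^2}$ loss.
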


Since the proof of Proposition \ref{prop3} for $b\in (0,1)$ is complicated, so we only give the proof for $b=0$ here and the proof for $b\in (0,1)$ is postponed to Section \ref{Appendix}.  

\noindent
\textbf{Proof of Proposition \ref{prop3} for $b=0$:} We divide the proof into three steps.

\noindent
{\bf(Step 1)}. 
Let $I=[(k-1)\delta, k\delta)$ in the proof of (i) and $I= [p,q)$ in the proof of (ii). Recall the definition of $F_n$ in \eqref{def-of-F} for some $\varepsilon\in (1/2,1)$ close to $1$ such that $(-a)>\frac{2\varepsilon}{2\varepsilon -1}$.
We have 
\begin{align}\label{decom-J}
	& \sum_{n=1}^\infty  \beta^{n}\widetilde{\mathbb{P}}\left(\max_{1\leq j \leq n} S_j \leq x, x-S_n\in I\right)\nonumber\\
	& =  \sum_{n\geq (\log x)^2}  \beta^{n}\widetilde{\mathbb{P}}\left(\max_{1\leq j \leq n} S_j \leq x, x-S_n\in I\right) +  \sum_{n< (\log x)^2}  \beta^{n}\widetilde{\mathbb{P}}\left(\max_{1\leq j \leq n} S_j \leq x, x-S_n\in I, F_n^c \right)\nonumber\\
	&\qquad + \sum_{n < (\log x)^2}  \beta^{n}\widetilde{\mathbb{P}}\left(\max_{1\leq j \leq n} S_j \leq x, x-S_n\in I , F_n,  \min_{1\leq i\leq n} X_i<-x^{\varepsilon}\right) \nonumber\\
	& \qquad+  \sum_{n<(\log x)^2} \beta^{n}\widetilde{\mathbb{P}}\left(\max_{1\leq j \leq n} S_j \leq x, x-S_n\in I, F_n, \min_{1\leq i\leq n} X_i\geq -x^{\varepsilon}\right) =: \sum_{i=1}^4 J_i.
\end{align}

\noindent
{\bf Estimate for $J_1$.} It holds that 
\begin{align}\label{e24-1}
	& \frac{J_1}{\ell(x) x^a }   \leq \frac{1}{\ell(x) x^a } \sum_{n\geq (\log x)^2}  \beta^{n} \lesssim_\beta \frac{\beta^{(\log x)^2}}{\ell(x) x^a } . 
\end{align}

\noindent
{\bf Estimate for $J_2$.} By \eqref{e15-2}, we have 
\begin{align}\label{e24-2}
	& \frac{J_2}{\ell(x) x^a }  \leq \frac{1}{\ell(x) x^a } \sum_{n< (\log x)^2}  \beta^{n}\widetilde{\mathbb{P}}\left(F_n^c\right) \lesssim \sum_{n< (\log x)^2}  \beta^{n/2} \frac{\ell^2(x^{\varepsilon}) }{\ell(x) } x^{(2\varepsilon-1)a + 2\varepsilon}\nonumber\\
	&\lesssim_\beta \frac{\ell^2(x^{\varepsilon}) }{\ell(x) } x^{(2\varepsilon-1)a + 2\varepsilon}.
\end{align}

\noindent
{\bf Estimate for $J_3$.}  It follows from  \eqref{e15-4} that 
\begin{align}\label{e24-3}
	& \frac{J_3}{\ell(x) x^a } \lesssim \frac{e^{-\gamma x^{\varepsilon}}}{\ell(x) x^a } \sum_{n < (\log x)^2}  n\beta^{n} \lesssim_\beta \frac{e^{-\gamma x^{\varepsilon}}}{\ell(x) x^a } . 
\end{align}
Combining \eqref{e24-1}, \eqref{e24-2} and \eqref{e24-3}, we deduce that uniformly for all  interval $I\subset \mathbb{R}$, 
\begin{align}\label{e24-4}
	& \frac{J_1+J_2+J_3}{\ell(x) x^a }\lesssim_\beta 1\lesssim \frac{1}{\ell(k\delta)(k\delta)^a}\quad\mbox{and}\quad  \lim_{x\to+\infty} \frac{J_1+J_2+J_3}{\ell(x) x^a }  =0. 
\end{align}

\noindent
{\bf(Step 2)}. In this step, we prove (i).  

\noindent
{\bf Case 1: $k\delta \leq x/2$.}
 On the event $x-S_n <k\delta \leq x/2\ \Rightarrow\ S_n>x/2$, there exists a $1\leq j\leq n$ such that $X_j >x/(2n)> x/(2(\log x)^2)> x^\varepsilon$ for large $x$. Therefore, 
\begin{align}
	& J_4 = \sum_{n<(\log x)^2} \beta^{n}\widetilde{\mathbb{P}}\left(\max_{1\leq k \leq n} S_k \leq x, x-S_n\in I, \exists \  1\leq j\leq n, X_j>x^\varepsilon,  \max_{i\neq  j} |X_i|\leq x^{\varepsilon}\right) \nonumber\\
	& \leq \sum_{n<(\log x)^2} n\beta^{n}\widetilde{\mathbb{P}}\left( x-S_n\in [(k-1)\delta, k\delta), X_n>x^\varepsilon,  \max_{1\leq i\leq n-1} |X_i|\leq x^{\varepsilon}\right) \nonumber\\
	& \leq \sum_{n<(\log x)^2} n\beta^{n}\widetilde{\mathbb{P}}\left( x-S_n\in [(k-1)\delta, k\delta),    |S_{n-1}|\leq nx^{\varepsilon}\right) .
\end{align}
Noticing that $\{|S_{n-1}|\leq nx^{\varepsilon}\}\subset \{|S_{n-1}|\leq (\log x)^2 x^\varepsilon\}$ for all $n< (\log x)^2$, we conclude that 
\begin{align}
	& \frac{J_4}{\ell(x) x^a }   \leq \frac{1}{\ell(x)x^a}\sum_{n<(\log x)^2} n\beta^{n} \sup_{|z|\leq (\log x)^2 x^\varepsilon} \widetilde{\mathbb{P}}\left( X_n \in (x-z- k\delta, x-z-(k-1)\delta]  \right) \nonumber\\
	&\lesssim_\beta \frac{1}{\ell(x)x^a}  \sup_{|z|\leq (\log x)^2 x^\varepsilon} \int_{x-z-k\delta}^{x-z-(k-1)\delta} \ell(y)y^a \mathrm{d} y.
\end{align}
Since $x/4< x/2 -  (\log x)^2 x^\varepsilon  \leq x-z-k\delta \leq x+ (\log x)^2 x^\varepsilon<2x $ for large $x$ and that $\ell(y)y^a\lesssim \ell(x)x^a$ for all $y\in [x/4, 2x]$,  $J_4$ is bounded from above by
\begin{align}\label{e25-1}
	\frac{J_4}{\ell(x) x^a } \lesssim_\beta \frac{1}{\ell(x)x^a}  \sup_{|z|\leq (\log x)^2 x^\varepsilon} \int_{x-z-k\delta}^{x-z-(k-1)\delta} \ell(x)x^a \mathrm{d} y=\delta\leq  1. 
\end{align}

\noindent
{\bf Case 2: $x/2< k\delta <2 x $.}  In this case,  we naturally have $J_4\leq \sum_{n<(\log x)^2} \beta^{n}\lesssim_\beta 1$ and that 
\begin{align}\label{e25-2}
	\frac{J_4}{\ell(x) x^a } \lesssim_\beta \frac{1}{\ell(x) x^a }\lesssim \frac{1}{\ell(k\delta) (k\delta)^a}.
\end{align}

\noindent
{\bf Case 3: $k\delta \geq 2x$.}  In this case, we have for $x$ large enough,
\begin{align}\label{e25-3}
	&J_4 = \sum_{n<(\log x)^2} \beta^{n}\widetilde{\mathbb{P}}\left(\max_{1\leq k \leq n} S_j \leq x, x-S_n\in I, F_n, \min_{1\leq i\leq n} X_i\geq -x^{\varepsilon}\right) \nonumber\\
	& \leq \sum_{n<(\log x)^2} \beta^{n}\widetilde{\mathbb{P}}\left( S_n\leq x-(k-1)\delta, \min_{1\leq  i\leq n} X_i\geq -x^{\varepsilon}\right) \nonumber\\
	& \leq \sum_{n<(\log x)^2} \beta^{n}\widetilde{\mathbb{P}}\left( -nx^{\varepsilon}\leq S_n\leq \delta -x\right) \nonumber\\
	& \leq  \sum_{n<(\log x)^2} \beta^{n} \widetilde{\mathbb{P}}\left( -(\log x)^2x^{\varepsilon}\leq S_n\leq 1 -x\right) =0.
\end{align}
Now (i) follows immediately from \eqref{decom-J}, \eqref{e24-4}, \eqref{e25-1}, \eqref{e25-2} and \eqref{e25-3}.

\noindent
{\bf(Step 3)}. In this step, we prove (ii).  For each fixed $k,\delta$, let $x$ be large enough such that $k\delta \leq x/2$. Following the argument at the begining of {\bf Case 1}, we have 
\begin{align}\label{e36-1}
	& J_4 = \sum_{n<(\log x)^2} \beta^{n}\widetilde{\mathbb{P}}\left(\max_{1\leq k \leq n} S_k \leq x, x-S_n\in [p,q), \exists  \ 1\leq j\leq n, X_j>x^\varepsilon,  \max_{i\neq  j} |X_i|\leq x^{\varepsilon}\right) \nonumber\\
	& = \sum_{n<(\log x)^2} \beta^{n}\sum_{j=1}^n \widetilde{\mathbb{P}}\left(\max_{1\leq k \leq n} S_k \leq x, x-S_n\in [p,q),  X_j>x^\varepsilon,  \max_{i\neq  j} |X_i|\leq x^{\varepsilon}\right) .
\end{align}
Noticing that $\max_{1\leq k\leq j-1}S_k \leq nx^{\varepsilon}\leq x$ on the event $\{\max_{i\neq  j} |X_i|\leq x^{\varepsilon}\}$ for large $x$ and $n<(\log x)^2$. Therefore,  set $Y:= x-(S_n- X_j)$ and $W:= S_n-\max_{j\leq k\leq n} S_k$. Then $(Y,W)$ is independent of $X_j$ and it holds that  
\begin{align}
	&J_4 = \sum_{n<(\log x)^2} \beta^{n}\sum_{j=1}^n \widetilde{\mathbb{P}}\left(X_j\leq W+Y, X_j\in (Y-q,Y-p],  X_j>x^\varepsilon,  \max_{i\neq  j} |X_i|\leq x^{\varepsilon}\right) \nonumber\\
	& = \sum_{n<(\log x)^2} \beta^{n}\sum_{j=1}^n \widetilde{\mathbb{E}}\left(1_{\{\max_{i\neq  j} |X_i|\leq x^{\varepsilon} \}} \widetilde{\mathbb{P}}\left(X\leq w+y, X\in (y-q,y-p],  X>x^\varepsilon \right)\big|_{w=W, y=Y} \right).
\end{align}
Since on $\{\max_{i\neq  j} |X_i|\leq x^{\varepsilon} \}$, we have $Y-q >x- nx^{\varepsilon}-1 >x^{\varepsilon}$ for $x$ large enough (depending on $q$), we may drop the event $\{X>x^{\varepsilon}\}$ and get that 
\begin{align}\label{e36-3}
	&\lim_{x\to+\infty} \frac{J_4}{\ell(x)x^a}\nonumber\\
	&= \lim_{x\to+\infty} \frac{1}{\ell (x)x^a} \sum_{n<(\log x)^2} \beta^{n}\sum_{j=1}^n \widetilde{\mathbb{E}}\left(1_{\{\max_{i\neq  j} |X_i|\leq x^{\varepsilon} \}} \int_{-q}^{\min\{-p, W\}}  \frac{\ell(z+y)}{\mathbb{E}(e^{\gamma X})}(z+Y)^a\mathrm{d}z\right).
\end{align}
Noticing that on $\{\max_{i\neq  j} |X_i|\leq x^{\varepsilon} \}$, $|Y-x|\leq nx^{\varepsilon}< (\log x)^2 x^{\varepsilon}$, which implies that $\ell(z+Y)/\ell(x)\to 1$ and $(z+Y)/x\to 1$ as $x\to+\infty$ uniformly on $\{\max_{i\neq  j} |X_i|\leq x^{\varepsilon} \}$ and $z\in [-q, -p]$. Therefore, 
\begin{align}
	&\lim_{x\to+\infty} \frac{J_4}{\ell(x)x^a}\nonumber\\
	&= \lim_{x\to+\infty} \sum_{n<(\log x)^2} \beta^{n}\sum_{j=1}^n \widetilde{\mathbb{E}}\left(1_{\{\max_{i\neq  j} |X_i|\leq x^{\varepsilon} \}} \int_{-q}^{\min\{-p, W\}} \frac{1}{\mathbb{E}(e^{\gamma X})} \mathrm{d}z\right)\nonumber\\
	& = \frac{1}{\mathbb{E}(e^{\gamma X})} \sum_{n=1}^\infty \beta^{n}\sum_{j=1}^n \widetilde{\mathbb{E}}\left( \int_{-q}^{\min\{-p, W\}}1   \mathrm{d}z\right) = \frac{1}{\mathbb{E}(e^{\gamma X})} \int_{-q}^{-p} \sum_{n=1}^\infty \beta^{n}\sum_{j=1}^n \widetilde{\mathbb{P}}\left( z< S_n-\max_{j\leq k\leq n} S_k \right)   \mathrm{d}z.
\end{align}
Since $S_n- \max_{j\leq k\leq n}S_k$ is equal in law to $\min_{0\leq k\leq n-j} S_k$, we conclude that
\begin{align}
	&\lim_{x\to+\infty} \frac{J_4}{\ell(x)x^a} = \frac{1}{\mathbb{E}(e^{\gamma X})} \int_{-q}^{-p} \sum_{n=1}^\infty \beta^{n}\sum_{j=0}^{n-1} \widetilde{\mathbb{P}}\left( z < \min_{0\leq k\leq j} S_k \right)   \mathrm{d}z \nonumber\\
	& =  \frac{1}{\mathbb{E}(e^{\gamma X})} \int_p^q \sum_{j=0}^\infty \frac{\beta^{j+1}}{1-\beta} \widetilde{\mathbb{P}}\left(z> -\min_{0\leq k\leq j} S_k \right)\mathrm{d} z. 
\end{align}
Combining \eqref{decom-J}, \eqref{e24-4} and the above limit, we get (ii). 

\hfill$\Box$

\section{Proof of the main results}\label{S4}

The following lemma can be found in \cite[p.23]{HZ2025}.

\begin{lemma}\label{lem3}
	Define $\psi(u):=\sum_{n=0}^\infty p_n(1-u)^n+ mu-1$ with the convension $0^0=1$. Then the function $u(x):= \mathbb{P}(M>x)$ satisfies
	\begin{align}
		u(x)=\frac{1-p_0}{m}\sum_{n=1}^\infty m^{n} \mathbb{P}(T_x^+=n)-\sum_{n=1}^\infty m^{n-1}\mathbb{E}\left( \psi\left(u(x-S_n)\right)1_{\{T_x^+>n\}}\right).
	\end{align}
\end{lemma}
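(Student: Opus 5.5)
The plan is to derive the identity from a first-generation decomposition, which gives a one-step renewal-type equation for $u$, and then iterate that equation along the random walk $(S_n)$ using the Markov property.

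\textbf{The one-step equation.} Write $f(s):=\sum_{n\ge0}p_ns^n$ for the offspring generating function, so that $\psi(u)=f(1-u)+mu-1$. Under {\bf(H0)}, the initial particle jumps to $X_1$ and is replaced there by $N$ particles. Each of these starts an independent copy of the branching random walk shifted by $X_1$.
\begin{itemize}
\item If $X_1>x$, then $M>x$ exactly when $N\ge1$, because we take $M_n=-\infty$ when no particle is alive.
\item If $X_1\le x$, then $M>x$ exactly when at least one of the $N$ subtrees has maximal displacement exceeding $x-X_1$.
\end{itemize}
Conditioning on $X_1$ and using the independence of $N$ and $X_1$ gives
\[
u(x)=(1-p_0)\mathbb{P}(X_1>x)+\mathbb{E}\big[\big(1-f(1-u(x-X_1))\big)1_{\{X_1\le x\}}\big].
\]
Substituting $1-f(1-v)=mv-\psi(v)$ yields
\[
u(x)=(1-p_0)\mathbb{P}(T_x^+=1)-\mathbb{E}\big[\psi(u(x-S_1))1_{\{T_x^+>1\}}\big]+m\,\mathbb{E}\big[u(x-S_1)1_{\{T_x^+>1\}}\big].
\]

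\textbf{Iteration.} Next I substitute the same identity for $u(x-S_1)$ inside the last term and use the Markov property of $(S_n)$. For $y=x-S_1$ and an independent walk $S'$, we have $\{T_x^+>1\}\cap\{T'^{+}_{y}=k\}=\{T_x^+=k+1\}$ and similarly $\{T_x^+>1\}\cap\{T'^+_y>k\}=\{T_x^+>k+1\}$. By induction on $N$ this gives
\[
u(x)=\sum_{n=1}^{N}\Big[(1-p_0)m^{n-1}\mathbb{P}(T_x^+=n)-m^{n-1}\mathbb{E}\big(\psi(u(x-S_n))1_{\{T_x^+>n\}}\big)\Big]+m^{N}\,\mathbb{E}\big[u(x-S_N)1_{\{T_x^+>N\}}\big].
\]
Since $(1-p_0)m^{n-1}=\frac{1-p_0}{m}m^n$, the first term in the bracket is exactly the $n$-th term of the first series in the lemma.

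\textbf{Passage to the limit.} The remainder is at most $m^N$, which tends to $0$ because $m<1$. Both series converge absolutely, using the following bounds.
\begin{itemize}
\item The first series is dominated by $\sum_n m^{n-1}$.
\item For the second series I need $0\le\psi(v)\le mv\le 1$ on $[0,1]$. The lower bound holds because $\psi(0)=0$ and $\psi'(v)=m-f'(1-v)\ge0$, since $f'$ is nondecreasing on $[0,1]$ with $f'(1)=m$. The upper bound holds because $f(1-v)\le1$.
\end{itemize}
Letting $N\to\infty$ therefore gives the stated identity.

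The only delicate point is the bookkeeping of the events $\{T_x^+=n\}$ and $\{T_x^+>n\}$ under the shift $x\mapsto x-S_1$ in the induction step. The rest is routine.
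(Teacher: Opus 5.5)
Your proof is correct. The first-generation equation $u(x)=(1-p_0)\mathbb{P}(X>x)+\mathbb{E}\big[(1-f(1-u(x-X)))1_{\{X\le x\}}\big]$, rewritten via $1-f(1-v)=mv-\psi(v)$ and iterated along the walk with your stopping-time bookkeeping, together with $0\le\psi\le 1$ and the remainder bound $m^N$, gives exactly the stated identity. The paper does not prove this lemma itself but quotes it from Hou and Zhang (HZ2025, p.~23), and your first-step-plus-iteration argument is the standard derivation of this identity.
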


\subsection{Proof of Theorem \ref{thm1}}\label{S4.1}

\textbf{Proof of Theorem \ref{thm1}:} Combining Proposition \ref{prop1}, Lemma \ref{lem3} and dominated convergence theorem, we see that 
\begin{align}\label{e12-4}
	\lim_{x\to+\infty} \sum_{n=1}^\infty m^{n} \frac{ \mathbb{P}(T_x^+=n)}{\mathbb{P}(X>x)}=\sum_{n=1}^\infty m^{n} =\frac{m}{1-m}.
\end{align}
For the non-linear term, noticing that $\lim_{u\to 0+}\frac{\psi(u)}{u}=0$, for any $\epsilon>0$, there exists $\delta>0$ such that $\psi(u)\leq \epsilon u$ for all $u\in (0,\delta)$. Therefore, we may fix a large $N$ such that $u(y)<\delta$ when $y>N$.  Since $\psi(u)\leq 1$ for all $u\in [0,1]$, we obtain that 
\begin{align}\label{e12-1}
	& \sum_{n=1}^\infty m^{n-1}\mathbb{E}\left(\psi\left(u(x-S_n)\right)1_{\{T_x^+>n\}}\right) \nonumber\\
	& \leq \epsilon \sum_{n=1}^\infty m^{n-1}\mathbb{E}\left(u(x-S_n)1_{\{T_x^+>n, S_n<x-N\}}\right) +\sum_{n=1}^\infty m^{n-1}\mathbb{P}\left(T_x^+>n, S_n\in (x-N, x]\right) \nonumber\\
	& \leq \epsilon \sum_{n=1}^\infty m^{n-1}\mathbb{E}\left(u(x-S_n)1_{\{T_x^+>n, S_n<x-N\}}\right) +\sum_{n=1}^\infty m^{n-1}\mathbb{P}\left(S_n\in (x-N, x]\right).
\end{align}
For the second term, combining Proposition \ref{prop1} (i), \eqref{e6} and dominated convergence theorem,  
\begin{align}\label{e12-2}
	& \lim_{x\to+\infty} \sum_{n=1}^\infty m^{n-1}\frac{\mathbb{P}\left(S_n\in (x-N, x]\right)}{\mathbb{P}(X>x)} \nonumber\\
	& = \lim_{x\to+\infty} \sum_{n=1}^\infty m^{n-1}\frac{\mathbb{P}\left(S_n >x-N\right)}{\mathbb{P}(X>x)}  -\lim_{x\to+\infty} \sum_{n=1}^\infty m^{n-1}\frac{\mathbb{P}\left(S_n> x\right)}{\mathbb{P}(X>x)} \nonumber\\
	& = \sum_{n=1}^\infty n m^{n-1}- \sum_{n=1}^\infty n m^{n-1}=0.
\end{align}
For the first term, combining Proposition \ref{prop1}, Lemma \ref{lem3} and the fact that $\psi\geq 0$, we have 
\[
u(x)\leq \frac{1-p_0}{m}\sum_{n=1}^\infty m^{n} \mathbb{P}(T_x^+=n) \lesssim \mathbb{P}(X>x).
\]
Therefore, we conclude that 
\begin{align}\label{e12-3}
	&\epsilon \sum_{n=1}^\infty m^{n-1}\mathbb{E}\left(u(x-S_n)1_{\{T_x^+>n, S_n<x-N\}}\right) \nonumber\\
	& \lesssim \epsilon \sum_{n=1}^\infty m^{n-1}\mathbb{E}\left(\mathbb{P}\left( X>y\right)\big|_{y=x-S_n}1_{\{T_x^+>n, S_n<x-N\}}\right) \nonumber\\
	& = \epsilon \sum_{n=1}^\infty m^{n-1}\mathbb{P}\left( S_{n+1}>x, T_x^+>n, S_n<x-N\right) \nonumber\\
	&\leq  \epsilon \sum_{n=1}^\infty m^{n-1}\mathbb{P}\left(  T_x^+=n+1 \right) \lesssim \epsilon \mathbb{P}(X>x).
\end{align}
Combining \eqref{e12-1}, \eqref{e12-2} and \eqref{e12-3}, we conclude that 
\begin{align}\label{e12-5}
	\limsup_{x\to+\infty} \frac{1}{\mathbb{P}(X>x)} \sum_{n=1}^\infty m^{n-1}\mathbb{E}\left(\psi\left(u(x-S_n)\right)1_{\{T_x^+>n\}}\right) \lesssim \epsilon \stackrel{\epsilon\to 0}{\longrightarrow}0.
\end{align}
Now it follows from Lemma \ref{lem3}, \eqref{e12-4} and \eqref{e12-5} that 
\begin{align}
	\lim_{x\to+\infty} \frac{u(x)}{\mathbb{P}(X>x)} = \frac{1-p_0}{m}\times \frac{m}{1-m}= \frac{1-p_0}{1-m},
\end{align}
which completes the proof of the theorem. 

\hfill$\Box$

\subsection{Proof of Theorem \ref{thm2}}\label{S4.2}

Set $\varphi(u):= \frac{\psi(u)}{u}$. Under the assumption $\sum_{k=1}^\infty k(\log k)p_k<\infty$, it follows from \cite[Lemma 7]{HRSZ25} that $\varphi(u)$ is increasing in $u\in [0,1]$ and  that for any $c>0$,
\begin{align}\label{proper-of-phi}
	\int_0^\infty \varphi (e^{-ct})\mathrm{d} t<\infty .
\end{align}

 \begin{lemma}\label{lem4}
 	Assume $\sum_{k=1}^\infty k(\log k) p_k<\infty$. Then $\int_1^\infty \varphi(u(x))\mathrm{d} x<\infty$.  Moreover, for each $\delta>0$, we have
 	\[
 	\sum_{k=1}^\infty \varphi(u(k\delta))<\infty. 
 	\]
 \end{lemma}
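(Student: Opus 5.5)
The strategy is to bound $u(x)=\mathbb{P}(M>x)$ from above by an exponentially decaying function and then use \eqref{proper-of-phi} together with the monotonicity of $\varphi$.

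\textbf{Step 1: exponential upper bound on $u$.} The key claim is that there exist $c,C>0$ with $u(x)\le Ce^{-cx}$ for all $x\ge 1$. Since $\psi\ge 0$, Lemma \ref{lem3} gives
\[
u(x)\le \frac{1-p_0}{m}\sum_{n\ge1} m^n\,\mathbb{P}(T_x^+=n)\le \frac{1-p_0}{m}\sum_{n\ge1} m^n\,\mathbb{P}(S_n>x).
\]
Under {\bf(H2)} we have $\mathbb{E}(e^{\gamma X})<\infty$ and $m\mathbb{E}(e^{\gamma X})<1$. The Markov inequality with parameter $\gamma$ then yields
\[
\sum_{n} m^n\,\mathbb{P}(S_n>x)\le e^{-\gamma x}\sum_n \big(m\mathbb{E}(e^{\gamma X})\big)^n=C e^{-\gamma x}.
\]
So $u(x)\lesssim e^{-\gamma x}$, i.e. we may take $c=\gamma$.

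Equivalently, this follows from the many-to-one formula: $\mathbb{P}(M>x)\le \sum_n m^n\,\mathbb{P}(S_n>x)$, by a union bound over the expected number of particles above $x$ at time $n$. This gives the same bound without invoking Lemma \ref{lem3}.

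\textbf{Step 2: the integral.} By the monotonicity of $\varphi$ from \cite[Lemma 7]{HRSZ25},
\[
\varphi(u(x))\le \varphi\big(\min\{1,Ce^{-\gamma x}\}\big).
\]
Pick $x_1$ with $Ce^{-\gamma x_1}\le 1$. On the compact piece $[1,x_1]$, $\varphi\le \varphi(1)<\infty$ since $\varphi(1)=\psi(1)=p_0+m-1\le 1$. On $[x_1,\infty)$, substitute $t=x-x_1$, so that $Ce^{-\gamma x}=Ce^{-\gamma x_1}e^{-\gamma t}\le e^{-\gamma t}$. Monotonicity of $\varphi$ and \eqref{proper-of-phi} with $c=\gamma$ then give
\[
\int_{x_1}^\infty \varphi(u(x))\,\mathrm{d}x\le \int_0^\infty \varphi(e^{-\gamma t})\,\mathrm{d}t<\infty.
\]

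\textbf{Step 3: the series.} Since $u$ is nonincreasing in $x$ and $\varphi$ is increasing, $x\mapsto \varphi(u(x))$ is nonincreasing. Hence for $k\ge 2$,
\[
\varphi(u(k\delta))\le \delta^{-1}\int_{(k-1)\delta}^{k\delta}\varphi(u(x))\,\mathrm{d}x.
\]
Summing over $k$ bounds the tail of the series by $\delta^{-1}\int_{\delta}^\infty\varphi(u)\,\mathrm{d}x$. The finitely many terms with $k\delta<1$, together with the piece $[\delta,1]$ of the integral, are bounded by $\varphi(1)$ times a constant. Therefore $\sum_k\varphi(u(k\delta))<\infty$.

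The main potential obstacle is Step 1, i.e. having an exponential rather than merely polynomial decay of $u$. The assumption $m\mathbb{E}(e^{\gamma X})<1$ in {\bf(H2)} gives this directly, so I expect the argument to go through cleanly. The only care needed is applying the monotonicity of $\varphi$ to the value of its argument capped at $1$.
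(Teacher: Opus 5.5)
Your proposal is correct and follows essentially the paper's route: use $\psi\ge 0$ in Lemma \ref{lem3} to get exponential decay of $u$, then apply the monotonicity of $\varphi$ and \eqref{proper-of-phi}. Your Chernoff bound with parameter $\gamma$ is exactly the paper's change of measure \eqref{Change-of-measure}; the paper gets $u(x)\le e^{-\gamma x}$ with constant $1$ (using $\frac{1-p_0}{m}\le 1$, $\beta^n\le 1$ and $\sum_n\widetilde{\mathbb{P}}(T_x^+=n)\le 1$), whereas you absorb a constant $C$ by a shift, and you also write out the integral-to-series comparison that the paper leaves implicit.
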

 \textbf{Proof:} According to the  monotonicity property of $\varphi$ and \eqref{proper-of-phi}, it remains to prove that there exists some $c>0$ such that $u(x)\leq e^{-cx}$ for all $x>K_0$.
 Set $\beta := m \mathbb{E}(e^{\gamma X})\in (0,1)$.  Combining \eqref{Change-of-measure} and Lemma \ref{lem3}, we have
 \begin{align}\label{eq-of-u}
 	u(x)=\frac{1-p_0}{m}\sum_{n=1}^\infty \beta^{n} \widetilde{\mathbb{E}}\left(e^{-\gamma S_n}1_{\{T_x^+=n\}}\right)-\frac{1}{m}\sum_{n=1}^\infty \beta^{n}\widetilde{\mathbb{E}}\left(e^{-\gamma S_n} \psi\left(u(x-S_n)\right)1_{\{T_x^+>n\}}\right).
 \end{align}
 Since $\{T_x^+=n\} \subset \{S_n>x\}$ and $m=\sum_{k=1}^\infty kp_k \geq \sum_{k=1}^\infty p_k =1-p_0$, we conclude from \eqref{eq-of-u} that 
  \begin{align} 
  	& u(x) \leq  \sum_{n=1}^\infty  \widetilde{\mathbb{E}}\left(e^{-\gamma S_n}1_{\{T_x^+=n\}}\right) \leq  e^{-\gamma x}\sum_{n=1}^\infty   \widetilde{\mathbb{P}}\left( T_x^+=n \right) \leq e^{-\gamma x},
  \end{align}
 which implies the desired result.

 \hfill$\Box$

\noindent
\textbf{Proof of Theorem \ref{thm2}:}  
Combining Proposition \ref{prop2} and the dominated convergence theorem, 
\begin{align}\label{e23-1}
	& \lim_{x\to+\infty} \frac{e^{\lambda x^b+ \gamma x}}{\ell(x)x^a  } \sum_{n=1}^\infty \beta^{n} \widetilde{\mathbb{E}}\left(e^{-\gamma S_n}1_{\{T_x^+=n\}}\right)\nonumber\\
	& = \frac{1}{\gamma \mathbb{E}(e^{\gamma X})} \left(1+ \sum_{j=1}^{\infty}\beta^j \widetilde{\mathbb{E}}\left( \left(1-e^{-\gamma \min_{1\leq i\leq j} S_i}\right)_+\right) \right)\frac{\beta}{1-\beta}.
\end{align}
For the non-linear term of \eqref{eq-of-u}, noticing that for each $\delta\in (0,1)$,
\begin{align}
	&\sum_{n=1}^\infty \beta^{n}\widetilde{\mathbb{E}}\left(e^{-\gamma S_n} \psi\left(u(x-S_n)\right)1_{\{T_x^+>n\}}\right)\nonumber\\
	&=e^{-\gamma x}\sum_{n=1}^\infty \beta^{n}\widetilde{\mathbb{E}}\left(e^{\gamma (x- S_n)} \psi\left(u(x-S_n)\right)1_{\{\max_{1\leq j \leq n} S_j \leq x\}}\right)\nonumber\\
	& = e^{-\gamma x}\sum_{k=1}^\infty \sum_{n=1}^\infty \beta^{n}\widetilde{\mathbb{E}}\left(e^{\gamma (x- S_n)} \psi\left(u(x-S_n)\right)1_{\{\max_{1\leq j \leq n} S_j \leq x, x-S_n\in [(k-1)\delta, k\delta) \}}\right).
\end{align}
Since $\psi(u(z))$ is decreasing in $z$, we obtain that 
\begin{align}\label{e31-1}
	& \sum_{k=1}^\infty e^{\gamma(k-1)\delta}\psi(u(k\delta))\sum_{n=1}^\infty \beta^{n}\widetilde{\mathbb{P}}\left( \max_{1\leq j \leq n} S_j \leq x, x-S_n\in [(k-1)\delta, k\delta) \right)\nonumber\\
	&  \leq e^{\gamma x} \sum_{n=1}^\infty \beta^{n}\widetilde{\mathbb{E}}\left(e^{-\gamma S_n} \psi\left(u(x-S_n)\right)1_{\{T_x^+>n\}}\right)  \nonumber\\
	&\leq  \sum_{k=1}^\infty e^{\gamma k\delta}\psi(u((k-1)\delta))\sum_{n=1}^\infty \beta^{n}\widetilde{\mathbb{P}}\left(\max_{1\leq j \leq n} S_j \leq x, x-S_n\in [(k-1)\delta, k\delta) \right).
\end{align}
It follows from \eqref{eq-of-u} and \eqref{e23-1} that $u(x)\lesssim e^{-\gamma x} \ell(x)x^a e^{-\lambda x^b}$. Therefore, 
\begin{align}\label{e31-2}
	e^{\gamma k\delta} \psi(u(k\delta))= e^{\gamma k\delta} u(k\delta) \varphi(u(k\delta)) \lesssim \ell(k\delta)(k\delta)^a e^{-\lambda (k\delta)^b}  \varphi(u(k\delta)).
\end{align}
Combining Proposition \ref{prop3}, \eqref{e31-2}, Lemma \ref{lem4} and dominated convergence theorem, for each $\delta\in (0,1)$,
\begin{align}\label{e31-3}
	& \frac{1}{\mathbb{E}(e^{\gamma X})} \sum_{k=1}^\infty e^{\gamma (k-1)\delta}\psi(u(k\delta)) \int_{(k-1)\delta}^{k\delta} \sum_{j=0}^\infty \frac{\beta^{j+1}}{1-\beta} \widetilde{\mathbb{P}}\left(z> -\min_{0\leq k\leq j} S_k \right)\mathrm{d} z\nonumber\\
	&=: \sum_{k=1}^\infty \Xi_1(k, \delta)  \leq \liminf_{x\to+\infty} \frac{e^{\lambda x^b+\gamma x}}{\ell(x)x^a } \sum_{n=1}^\infty \beta^{n}\widetilde{\mathbb{E}}\left(e^{-\gamma S_n} \psi\left(u(x-S_n)\right)1_{\{T_x^+>n\}}\right) \nonumber\\
	&\leq \limsup_{x\to+\infty} \frac{e^{\lambda x^b+\gamma x}}{\ell(x)x^a } \sum_{n=1}^\infty \beta^{n}\widetilde{\mathbb{E}}\left(e^{-\gamma S_n} \psi\left(u(x-S_n)\right)1_{\{T_x^+>n\}}\right)  \leq \sum_{k=1}^\infty \Xi_2(k, \delta)  \nonumber\\
	&:= \frac{1}{\mathbb{E}(e^{\gamma X})} \sum_{k=1}^\infty e^{\gamma k\delta}\psi(u((k-1)\delta)) \int_{(k-1)\delta}^{k\delta} \sum_{j=0}^\infty \frac{\beta^{j+1}}{1-\beta} \widetilde{\mathbb{P}}\left(z> -\min_{0\leq k\leq j} S_k \right)\mathrm{d} z.
\end{align}
Now we would like to take $\delta\to0$ in  \eqref{e31-3}. We only treat   $\Xi_1(k,\delta)$ here since the proof for $\Xi_2(k,\delta)$  is similar. Fix $N>0$, let $L\in \mathbb{N}$ be large enough such that  $\delta:= N/L \in (0,1)$. Combining Lemma \ref{lem4}, \eqref{e31-2} and the fact that $\psi(u(x))$ is decreasing in $x$, we have 
\begin{align}
	&\sum_{k=L+1}^\infty \Xi_1(k, \delta) \leq \frac{1}{\mathbb{E}(e^{\gamma X})} \sum_{j=0}^\infty \frac{\beta^{j+1}}{1-\beta}  \sum_{k=L+1}^\infty  \int_{(k-1)\delta}^{k\delta} e^{\gamma x}\psi(u(x)) \mathrm{d} z\nonumber\\
	&=  \frac{1}{\mathbb{E}(e^{\gamma X})}\sum_{j=0}^\infty \frac{\beta^{j+1}}{1-\beta}  \int_{N}^{\infty} e^{\gamma x}\psi(u(x))\mathrm{d} z  \lesssim  \int_N^\infty \varphi(u(x))\mathrm{d}x.
\end{align}
Therefore, for any $\varepsilon>0$, we may choose a large $N$ such that 
\begin{align}\label{e32-1}
	& \sum_{k=L+1}^\infty \Xi_1(k, \delta)\leq  \frac{1}{\mathbb{E}(e^{\gamma X})} \sum_{j=0}^\infty \frac{\beta^{j+1}}{1-\beta}  \int_{N}^{\infty} e^{\gamma x}\psi(u(x))\mathrm{d} z  <\varepsilon.
\end{align}
For the case $1\leq k\leq L$,  since $\psi(u(x))$ is decreasing in $x$, we have the following upper bound
\begin{align}\label{e32-2}
	& \sum_{k=1}^L \Xi_1(k, \delta)\leq \frac{1}{\mathbb{E}(e^{\gamma X})} \sum_{k=1}^L  \int_{(k-1)\delta}^{k\delta} e^{\gamma x}\psi(u(x))  \sum_{j=0}^\infty \frac{\beta^{j+1}}{1-\beta} \widetilde{\mathbb{P}}\left(z>-\min_{0\leq k\leq j} S_k \right)\mathrm{d} z \nonumber\\
	& =\frac{1}{\mathbb{E}(e^{\gamma X})} \int_0^N e^{\gamma x}\psi(u(x))  \sum_{j=0}^\infty \frac{\beta^{j+1}}{1-\beta} \widetilde{\mathbb{P}}\left(z> -\min_{0\leq k\leq j} S_k \right)\mathrm{d} z. 
\end{align}
For the lower bound, we also have 
\begin{align}\label{e32-3}
	& \sum_{k=1}^L  \Xi_1(k, \delta) \geq\frac{e^{-\gamma\delta}}{\mathbb{E}(e^{\gamma X})} \sum_{k=1}^L   \int_{(k-1)\delta}^{k\delta}e^{\gamma x}\psi(u(x-\delta)) \sum_{j=0}^\infty \frac{\beta^{j+1}}{1-\beta} \widetilde{\mathbb{P}}\left(z> -\min_{0\leq k\leq j} S_k \right)\mathrm{d} z\nonumber\\
	& =  \frac{e^{-\gamma \delta}}{\mathbb{E}(e^{\gamma X})} \int_0^N e^{\gamma x}\psi(u(x-\delta)) \sum_{j=0}^\infty \frac{\beta^{j+1}}{1-\beta} \widetilde{\mathbb{P}}\left(z> -\min_{0\leq k\leq j} S_k \right)\mathrm{d} z.
\end{align}
Noticing that $u(x)$ is decreasing in $x$, we have $\psi(u(x-\delta))\to \psi(u(x-))$ as $\delta\to 0$. Since $\psi(u(x-))=\psi(u(x))$ for a.e. $x\in [0,N]$, 
taking $L\to\infty $ in both \eqref{e32-2} and \eqref{e32-3}, it follows from dominated convergence theorem (in \eqref{e32-3}) that 
\begin{align}
	& \lim_{L\to\infty}  \sum_{k=1}^L  \Xi_1(k, \delta)  = \frac{1}{\mathbb{E}(e^{\gamma X})} \int_0^N e^{\gamma x}\psi(u(x))  \sum_{j=0}^\infty \frac{\beta^{j+1}}{1-\beta} \widetilde{\mathbb{P}}\left(z> -\min_{0\leq k\leq j} S_k \right)\mathrm{d} z. 
\end{align}
Combining \eqref{e32-1} and the above limit, we obtain that 
\begin{align}\label{e32-4}
	& \lim_{L\to\infty}\sum_{k=1}^\infty \Xi_1(k, \delta) =\frac{1}{\mathbb{E}(e^{\gamma X})} \int_0^\infty e^{\gamma x}\psi(u(x))  \sum_{j=0}^\infty \frac{\beta^{j+1}}{1-\beta} \widetilde{\mathbb{P}}\left(z> -\min_{0\leq k\leq j} S_k \right)\mathrm{d} z. 
\end{align}
Similarly, we also have that 
\begin{align}\label{e32-5}
	& \lim_{L\to\infty} \sum_{k=1}^\infty \Xi_2(k, \delta)=\frac{1}{\mathbb{E}(e^{\gamma X})} \int_0^\infty e^{\gamma x}\psi(u(x))  \sum_{j=0}^\infty \frac{\beta^{j+1}}{1-\beta} \widetilde{\mathbb{P}}\left(z> -\min_{0\leq k\leq j} S_k \right)\mathrm{d} z. 
\end{align}
Plugging \eqref{e32-4} and \eqref{e32-5} back to \eqref{e31-1} yields that 
\begin{align}\label{e33-1}
	& \lim_{x\to+\infty} \frac{e^{\lambda x^b+ \gamma x} }{\ell(x)x^a }\sum_{n=1}^\infty \beta^{n}\widetilde{\mathbb{E}}\left(e^{-\gamma S_n} \psi\left(u(x-S_n)\right)1_{\{T_x^+>n\}}\right)\nonumber\\
	& = \frac{1}{\mathbb{E}(e^{\gamma X})} \int_0^\infty e^{\gamma x}\psi(u(x))  \sum_{j=0}^\infty \frac{\beta^{j+1}}{1-\beta} \widetilde{\mathbb{P}}\left(z> -\min_{0\leq k\leq j} S_k \right)\mathrm{d} z.
\end{align}
Combining \eqref{eq-of-u}, \eqref{e23-1} and \eqref{e33-1}, we conclude that 
\begin{align}\label{expression-of-C}
	& \lim_{x\to+\infty} \frac{e^{\lambda x^b+ \gamma x}}{\ell(x)x^a  } u(x)\nonumber\\
	& = \frac{1-p_0}{m}\frac{1}{\gamma \mathbb{E}(e^{\gamma X})} \left(1+ \sum_{j=1}^{\infty}\beta^j \widetilde{\mathbb{E}}\left( \left(1-e^{-\gamma \min_{1\leq i\leq j} S_i}\right)_+\right) \right)\frac{\beta}{1-\beta}\nonumber\\
	&\qquad - \frac{1}{m\mathbb{E}(e^{\gamma X})} \int_0^\infty e^{\gamma x}\psi(u(x))  \sum_{j=0}^\infty \frac{\beta^{j+1}}{1-\beta} \widetilde{\mathbb{P}}\left(z> -\min_{0\leq k\leq j} S_k \right)\mathrm{d} z=: C_0\in [0, \infty). 
\end{align}
It remains to show that $C_0>0$. Since $u(x)\geq (1-p_0)\mathbb{P}(X>x) \geq (1-p_0)\mathbb{P}(X\in (x,x+1))$, we obtain
\begin{align}
	\frac{e^{\lambda x^b+ \gamma x}}{\ell(x)x^a } u(x) \geq (1-p_0) \frac{e^{\lambda x^b+\gamma x}}{\ell(x)x^a  }  \int_x^{x+1}\ell(y)y^ae^{-\lambda y^b-\gamma y}\mathrm{d} y\gtrsim 1,
\end{align}
as desired. 

\hfill$\Box$

\section{Proof of Proposition \ref{prop3} for $b\in (0,1)$}\label{Appendix}

We divide the proof of Proposition \ref{prop3} for $b\in (0,1)$ into several lemmas. 
Recall the definition of $H_n$ in \eqref{def-of-H} for $\varepsilon_0= \frac{1}{2}\min\{ b, 1-b\}$. 
We have the following decomposition for any interval $I$:
\begin{align}\label{decomposition}
	&  \sum_{n=1}^\infty \beta^{n}\widetilde{\mathbb{P}}\left(\max_{1\leq j \leq n} S_j \leq x, x-S_n\in I \right) \nonumber\\
	&=  \sum_{n\geq x^{(1+b)/2}} \beta^{n}\widetilde{\mathbb{P}}\left(\max_{1\leq j \leq n} S_j \leq x, x-S_n\in I \right) + \sum_{n<x^{(1+b)/2}}  \beta^{n}\widetilde{\mathbb{P}}\left(\max_{1\leq j \leq n} S_j \leq x, x-S_n\in I, H_n^c\right) \nonumber\\
	&\qquad + \sum_{(\log x)^{5/2}\leq n <x^{(1+b)/2}} \beta^{n}\widetilde{\mathbb{P}}\left(\max_{1\leq j \leq n} S_j \leq x, x-S_n\in I, H_n \right) \nonumber\\
	&\qquad + \sum_{n< (\log x)^{5/2} } \beta^{n}\widetilde{\mathbb{P}}\left(\max_{1\leq j \leq n} S_j \leq x, x-S_n\in I, H_n \right) \nonumber\\
	&=: \sum_{i=1}^4 T_i. 
\end{align}

\begin{lemma}\label{lem5-1}
	Assume {\bf(H2)} and $b\in (0,1)$. 
	\begin{itemize}
		\item[(i)] We have 
		\begin{align}
			\sup_{I\subset [0,\infty)}  \frac{e^{\lambda x^b}}{\ell(x)x^a }(T_1+T_2) \lesssim_\beta  e^{-\lambda x^{b}/2}.
		\end{align}
		\item[(ii)] Fix $\delta\in (0,1)$. For any $k\delta >x$ and $I= [(k-1)\delta, k\delta)$, it holds that 
		\begin{align}
		\frac{e^{\lambda x^b}}{\ell(x)x^a }(T_3+T_4)  \lesssim_\beta \frac{e^{ \lambda (k\delta)^b }}{\ell(k\delta)(k\delta)^a}.
		\end{align}
	\end{itemize}
\end{lemma}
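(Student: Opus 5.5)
For (i) I would not use the interval $I$ at all: both $T_1$ and $T_2$ can be bounded after dropping the event $\{x-S_n\in I\}$, which makes the bound uniform in $I$ automatically. For (ii) the plan is to use only the trivial bound $T_3+T_4\le \sum_n\beta^n\lesssim_\beta 1$. Then everything reduces to comparing the function $y\mapsto \ell(y)y^ae^{-\lambda y^b}$ at $y=x$ and $y=k\delta>x$. Throughout I take $x>K_0$; for $x$ in a compact range $[K_0,K]$, \eqref{Def-of-K-0} shows that $\ell(x)x^ae^{-\lambda x^b}$ is bounded away from $0$ and $\infty$, so it is enough to treat large $x$.

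\emph{Step 1: $T_1$.} Bound $T_1\le \sum_{n\ge x^{(1+b)/2}}\beta^n\lesssim_\beta \beta^{x^{(1+b)/2}}$. Since $(1+b)/2>b$, we have $\beta^{x^{(1+b)/2}}\le e^{-2\lambda x^b}$ for large $x$. By Lemma \ref{lem-useful-fact} (ii), $\ell(x)x^a\gtrsim x^{-|a|-1}$, and therefore
\[
\frac{e^{\lambda x^b}}{\ell(x)x^a}T_1\lesssim_\beta x^{|a|+1}e^{-\lambda x^b}\lesssim e^{-\lambda x^b/2}.
\]

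\emph{Step 2: $T_2$.} Drop the constraints and use the union bound,
\[
T_2\le\sum_n\beta^n\widetilde{\mathbb P}(H_n^c)\le\sum_n n^2\beta^n\,\widetilde{\mathbb P}(X>x-2x^{\varepsilon_0})^2 .
\]
By \eqref{proper-of-tail}, applied to $(X,\widetilde{\mathbb P})$ (allowed since $\varepsilon_0<1-b$), followed by Lemma \ref{lem1}, this is $\lesssim_\beta \widetilde{\mathbb P}(X>x)^2$. From here the computation is literally that of \eqref{e19-2}:
\[
\frac{e^{\lambda x^b}}{\ell(x)x^a}\,\widetilde\ell^2(x)x^{2(a+1-b)}e^{-2\lambda x^b}\lesssim \ell(x)x^{a+2(1-b)}e^{-\lambda x^b}\lesssim e^{-\lambda x^b/2}.
\]
This proves (i).

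\emph{Step 3: part (ii).} Using $T_3+T_4\lesssim_\beta 1$, it suffices to show that $g(y):=\ell(y)y^ae^{-\lambda y^b}$ satisfies $g(y)\lesssim g(x)$ for all $y\ge x> K_0$. I would use Lemma \ref{lem1}, which gives
\[
g(y)=\frac{\ell(y)}{\widetilde\ell(y)}\,y^{b-1}\,\widetilde{\mathbb P}(X>y).
\]
Here $\widetilde{\mathbb P}(X>y)$ is nonincreasing, and $y^{b-1}$ is decreasing because $b<1$. It therefore remains to check that $\ell/\widetilde\ell$ is bounded above and below on $[K_0,\infty)$. For large $y$ this follows from the limit $\widetilde\ell/\ell\to 1/(\lambda b\,\mathbb E(e^{\gamma X}))\in(0,\infty)$ in Lemma \ref{lem1}. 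On compacts $[K_0,K]$ it follows from \eqref{Def-of-K-0} for $\ell$, together with $\widetilde\ell(y)=y^{-(a+1-b)}e^{\lambda y^b}\widetilde{\mathbb P}(X>y)$, which is continuous and positive there. Combining these, for $y\ge x$,
\[
g(y)\lesssim y^{b-1}\widetilde{\mathbb P}(X>y)\le x^{b-1}\widetilde{\mathbb P}(X>x)\lesssim g(x).
\]
Taking $y=k\delta$ gives $\frac{e^{\lambda x^b}}{\ell(x)x^a}(T_3+T_4)\lesssim_\beta 1/g(x)\lesssim 1/g(k\delta)$, which is the claim.

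The only delicate point is the two-sided comparability $\widetilde\ell\asymp\ell$ on all of $[K_0,\infty)$, rather than only asymptotically. If that is problematic near $K_0$, I would fall back on Potter's bound, Lemma \ref{lem-useful-fact} (ii). With $t=y/x\ge1$ it gives $g(y)/g(x)\lesssim t^{|a|+1}e^{-\lambda x^b(t^b-1)}$, which is bounded uniformly in $t\ge1$ once $x^b$ exceeds a fixed constant. Small $x$ would then again be handled by \eqref{Def-of-K-0}.
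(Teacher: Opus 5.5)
Your proposal is correct. Part (i) is the paper's argument: the paper also bounds $T_1$ by the geometric tail $\sum_{n\ge x^{(1+b)/2}}\beta^n$ and $T_2$ by $\sum_n\beta^n\widetilde{\mathbb P}(H_n^c)$, quoting \eqref{e19-2}.

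Part (ii) starts from the same trivial bound $T_3+T_4\lesssim_\beta 1$. What differs is how you prove the comparison $g(k\delta)\lesssim g(x)$, where $g(y):=\ell(y)y^ae^{-\lambda y^b}$.

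\emph{The paper's route.} It splits into two ranges. For $k\delta\in(x,2x)$ it uses $\ell(k\delta)(k\delta)^a\asymp\ell(x)x^a$, by uniform convergence for slowly varying functions, together with $e^{\lambda x^b}\le e^{\lambda(k\delta)^b}$. For $k\delta\ge 2x$ it absorbs all polynomial factors through the crude chain $\frac{e^{\lambda x^b}}{\ell(x)x^a}\lesssim e^{\lambda(3x/2)^b}\le e^{\lambda(3k\delta/4)^b}\lesssim\frac{e^{\lambda(k\delta)^b}}{\ell(k\delta)(k\delta)^a}$.

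\emph{Your route.} You write $g(y)=\frac{\ell(y)}{\widetilde\ell(y)}\,y^{b-1}\,\widetilde{\mathbb P}(X>y)$. Monotonicity of the tilted tail and of $y^{b-1}$ then gives quasi-monotonicity of $g$ on all of $[K_0,\infty)$ in one line. This only needs the global two-sided comparability $\ell\asymp\widetilde\ell$. You justify that correctly from the limit in Lemma \ref{lem1} for large arguments and from \eqref{Def-of-K-0} plus positivity of $\widetilde{\mathbb P}(X>y)$ on compacts.

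\emph{Trade-off.} Your argument has no case split and holds uniformly for all $x>K_0$ at once. However, it relies on the specific structure of (H2): the tail of $X$ under $\widetilde{\mathbb P}$ is an integral of $\ell(y)y^ae^{-\lambda y^b}$, so Lemma \ref{lem1} is available. The paper's comparison uses only generic facts about slowly varying functions and stretched exponentials. Your Potter-bound fallback is also valid and is close in spirit to the paper's argument, just packaged as a single estimate in $t=y/x\ge 1$.
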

\textbf{Proof: }
(i) By \eqref{e19-2}, it holds that 
\begin{align}
	& \frac{e^{\lambda x^b}}{\ell(x)x^a }(T_1+T_2)  \leq  \frac{e^{\lambda x^b}}{\ell(x)x^a } \sum_{n\geq x^{(1+b)/2}} \beta^{n} +\sum_{n=1}^\infty \beta^{n/2} \frac{ \beta^{n/2}e^{\lambda x^b}\widetilde{\mathbb{P}}\left(  H_n^c\right)}{\ell(x)x^a } \nonumber\\
	&\lesssim_\beta \frac{\beta^{x^{(1+b)/2}}e^{\lambda x^b}}{\ell(x)x^a }+ e^{-\lambda x^b/2} \nonumber\\
	&\lesssim_\beta  e^{-\lambda x^b/2},
\end{align}
as desired. 

(ii) In the case $k\delta>x$, we have 
\begin{align}
	\frac{e^{\lambda x^b}}{\ell(x)x^a } (T_3+T_4)\leq \frac{e^{\lambda x^b}}{\ell(x)x^a } \sum_{n=1}^\infty   \beta^{n} \lesssim_\beta   \frac{e^{\lambda x^b}}{\ell(x)x^a }.
\end{align}
Since  $\frac{e^{\lambda x^b}}{\ell(x)x^a } \lesssim \frac{e^{\lambda x^b}}{\ell(k\delta)(k\delta)^a } \leq \frac{e^{\lambda (k\delta)^b}}{\ell(k\delta)(k\delta)^a } $ when $k\delta \in (x,2x)$ and  $\frac{e^{\lambda x^b}}{\ell(x)x^a }  \lesssim  e^{\lambda (3x/2)^b} \leq e^{\lambda (3 k\delta/4)^b} \lesssim  \frac{e^{\lambda (k\delta)^b}}{\ell(k\delta)(k\delta)^a } $ when $k\delta  \geq 2x$, we conclude that for $k\delta>x$,
\begin{align}\label{e26-3}
	& \frac{e^{\lambda x^b}}{\ell(x)x^a } (T_3+T_4) \lesssim_\beta  \frac{e^{ \lambda (k\delta)^b }}{\ell(k\delta)(k\delta)^a},
\end{align}
which implies (ii). 

\hfill$\Box$

\begin{lemma}\label{lem5-2}
	Assume {\bf(H2)} and $b\in (0,1)$.  
	\begin{itemize}
		\item[(i)] Fix $\delta\in (0,1)$. Then for all $k\delta \geq K_0, I= [(k-1)\delta, k\delta)$ and $x>K_0$, 
		\begin{align}
			\frac{e^{\lambda x^b}}{\ell(x)x^a }T_3 \lesssim_{\beta } \frac{e^{\lambda (k\delta)^b}}{\ell(k\delta)(k\delta)^a} .
		\end{align}
		\item[(ii)] For $I=[p, q)$, it holds that 
		\begin{align}
			\lim_{x\to+\infty} \frac{e^{\lambda x^b}}{\ell(x)x^a }T_3  =0.
		\end{align}
	\end{itemize}
\end{lemma}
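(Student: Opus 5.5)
We need to bound $T_3=\sum_{(\log x)^{5/2}\le n<x^{(1+b)/2}}\beta^n\widetilde{\mathbb P}(\max_{j\le n}S_j\le x,\ x-S_n\in I,\ H_n)$. The plan is to mimic Step 2 (Case 2) of the proof of Proposition \ref{prop2} for $b\in(0,1)$. Split the event $H_n$ into two pieces: either $\max_{i\le n}X_i\le x-2x^{\varepsilon_0}$, or exactly one index $j$ has $X_j>x-2x^{\varepsilon_0}$.

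\textbf{The no-big-jump piece.} Here I would not use the event $\{x-S_n\in I\}$ at all. When $S_n$ is far below $x$, the only usable information is $\max_j S_j\le x$, so a different route is needed. Drop the constraint on $S_n$ and bound $\beta^n$ crudely: $\beta^{n}\le \beta^{(\log x)^{5/2}}$, which decays faster than any power of $x$ and faster than $e^{-c(\log x)^2}$. Summing over $n$ gives
\[
\frac{e^{\lambda x^b}}{\ell(x)x^a}T_3\lesssim_\beta \frac{e^{\lambda x^b}}{\ell(x)x^a}\,\beta^{(\log x)^{5/2}/2}.
\]
This is not small, because $e^{\lambda x^b}$ is far bigger. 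So the crude bound fails, and this is the main obstacle.

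The fix is to note that the target bound $e^{\lambda (k\delta)^b}/(\ell(k\delta)(k\delta)^a)$ is itself large when $k\delta$ is comparable to $x$. I would therefore split according to $k\delta$.

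\emph{Case $k\delta\ge x/2$.} Then
\[
\frac{e^{\lambda x^b}}{\ell(x)x^a}\lesssim \frac{e^{\lambda (2k\delta)^b}}{\ell(k\delta)(k\delta)^a}\cdot\text{poly},
\]
which is not good enough directly. Instead, use that $\{x-S_n\ge (k-1)\delta\}$ forces $S_n\le x-(k-1)\delta$. Apply the exponential Chebyshev bound to $-S_n$ via the left tail \eqref{left-tail-X} (plus the positive drift): the probability that $S_n$ is that far below is at most $e^{-c k\delta}$ times a factor of $n$, unless $n\gtrsim k\delta$. In the latter regime, $\beta^n\le e^{-ck\delta}$. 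Both ways we get a bound $e^{-ck\delta}$, and after multiplying by $e^{\lambda x^b}\lesssim e^{\lambda (2k\delta)^b}$ this is bounded by $1\lesssim e^{\lambda (k\delta)^b}/(\ell(k\delta)(k\delta)^a)$.

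\emph{Case $k\delta< x/2$.} Then $S_n>x/2$. Apply the Chernoff bound \eqref{e8-1} exactly as in Case 2 of Step 2 of Proposition \ref{prop2}:
\[
\widetilde{\mathbb P}\big(S_n>x/2\text{ or }>x,\ \max_i X_i\le x-2x^{\varepsilon_0}\big)\le e^{-\lambda x^b+(\log x)^2+(C_*\theta^2+\widetilde\mu\theta)n}.
\]
I would redo Lemma \ref{lem2} with threshold $x-k\delta$ in place of $x$; monotonicity in the threshold suffices. Using $\log x<n^{2/5}$ and $\beta^{n/2}$ to absorb the exponent, as done there, yields $\frac{e^{\lambda x^b}}{\ell(x)x^a}(\cdot)\lesssim_\beta 1\lesssim$ the target.

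\textbf{The one-big-jump piece.} Bound it by $n\,\widetilde{\mathbb P}(x-S_n\in I,\ X_n>x-2x^{\varepsilon_0})$ after a union bound over the position of the big jump. Here $S_{n-1}$ need not be small, so condition on $S_{n-1}=s$ and integrate the density of $X$ over an interval of length $\delta$ located at $x-s-k\delta$. When $k\delta$ is small, the density there is $\lesssim \ell(x)x^ae^{-\lambda (x-3x^{\varepsilon_0})^b}$; the case of large $S_{n-1}$ is handled as in \eqref{e22-2}. This gives a bound of order $n\beta^n\ell(x)x^ae^{-\lambda x^b}$. Summing over $n\ge(\log x)^{5/2}$ gives $o(1)$, which proves (ii) and (i) for $k\delta$ bounded; for larger $k\delta$ use the Case $k\delta\ge x/2$ argument above.

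For (ii), the interval $I=[p,q)$ is fixed, so we are always in the case $k\delta<x/2$. Both pieces there carry an extra factor $\beta^{(\log x)^{5/2}/6}\to0$, since the sum over $n$ starts at $(\log x)^{5/2}$. Hence the limit is $0$.
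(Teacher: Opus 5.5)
Your one-big-jump piece and part (ii) are essentially fine. Part (i), however, breaks down on the no-big-jump piece
\[
\sum_{(\log x)^{5/2}\le n<x^{(1+b)/2}}\beta^{n}\,\widetilde{\mathbb{P}}\Big(\max_{1\le j\le n}S_j\le x,\ x-S_n\in[(k-1)\delta,k\delta),\ \max_{1\le i\le n}X_i\le x-2x^{\varepsilon_0}\Big)
\]
when $k\delta$ is comparable to $x$. That is exactly the range $K_0\le k\delta\le x$ left over after Lemma \ref{lem5-1}(ii).

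\textbf{Case $k\delta\ge x/2$.} Your left-tail argument points the wrong way. For instance, when $k\delta=3x/4$ the level $x-(k-1)\delta\approx x/4$ is far above the typical size $O(n)$ of $S_n$ for $n$ of order $(\log x)^{5/2}$. So $\{S_n\le x-(k-1)\delta\}$ is not rare at all, and \textbf{(H2)} does not force $\widetilde{\mathbb{E}}(X)>0$ anyway. The bound $e^{-ck\delta}$ is in fact false for the piece itself. For $k\delta=3x/4$, a single jump of size about $x/4<x-2x^{\varepsilon_0}$ puts $S_n$ into $(x/4,x/4+\delta]$ with probability at least $e^{-\lambda(x/4)^b-O((\log x)^{5/2})}$, which is far larger than $e^{-3cx/4}$.

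\textbf{Case $k\delta<x/2$.} Your displayed bound $e^{-\lambda x^b+(\log x)^2+\cdots}$ holds only for the event $\{S_n>x\}$. At level $x/2$ the same computation gives only $e^{-\lambda x^b/2+\cdots}$. At level $x-k\delta$ (keeping $\theta(x)$) it carries an extra factor $e^{\theta(x)k\delta}$, which is unbounded once $k\delta\gg x^{1-b}$. Hence your claim that the normalized piece is $\lesssim_\beta 1$ is false: for $k\delta=x/4$, a single jump near $3x/4$ makes it at least $e^{\lambda x^b-\lambda(3x/4)^b-O((\log x)^{5/2})}\to\infty$. Rerunning Lemma \ref{lem2} with $\theta(x-k\delta)$ does not help either. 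The event still allows jumps $u$ up to $x-2x^{\varepsilon_0}$, and for such $u$ the exponent $\theta(x-k\delta)u-\lambda u^b$ is positive of order $x^b$, so the truncated moment generating function explodes.

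\textbf{The missing idea.} A single Chernoff bound handles every $z:=k\delta\in[K_0,x]$ at once, with no split at $x/2$ and no left tail. Keep $\theta(x)$ and the truncation at $x-x^{\varepsilon_0}$ from Lemma \ref{lem2}, and apply Markov's inequality at level $x-z$. With $\widetilde{\mu}=\widetilde{\mathbb{E}}(X)$ this gives
\[
\frac{e^{\lambda x^b}}{\ell(x)x^a}\,\widetilde{\mathbb{P}}\Big(S_n>x-z,\ \max_{1\le i\le n}X_i\le x-2x^{\varepsilon_0}\Big)\le \frac{e^{(\log x)^2+\theta(x)z+(\widetilde{\mu}\theta(x)+C_*\theta^2(x))n}}{\ell(x)x^a}.
\]
The factor $e^{\theta(x)z}$ must be compared with the target, not absorbed into a constant. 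This works because $\theta(x)z\le\lambda x^{b-1}z\le\lambda z^b$, precisely since $z\le x$. The remaining factors are killed by $\beta^{n}$ for $n\ge(\log x)^{5/2}$. Finally, $1\lesssim x^{|a|+1}/(\ell(z)z^a)$ for $K_0\le z\le x$ turns $e^{\lambda z^b}$ into $e^{\lambda z^b}/(\ell(z)z^a)$. This is the paper's estimate of $\widehat{T}_{31}$.

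\textbf{Your one-big-jump piece.} This bound actually holds uniformly in $k$. The constraint $X_n>x-2x^{\varepsilon_0}$ alone confines the density evaluation to $[x-2x^{\varepsilon_0},\infty)$, where the density is $\lesssim\ell(x)x^ae^{-\lambda x^b}$. So you never need to fall back on the flawed case there. Conditioning on $S_{n-1}$ this way is a valid and somewhat simpler substitute for the paper's split into $\widehat{T}_{32}$ (moment bound when $|S_n-X_j|>x^{\varepsilon_0}$) and $\widehat{T}_{33}$ (local density bound when $|S_n-X_j|\le x^{\varepsilon_0}$).
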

\textbf{Proof: }  
By Lemma \ref{lem5-1} (ii), it suffices to prove (i) for $K_0\leq k\delta \leq x$ and $x>K_0$. Since 
\begin{align}
	T_3\leq \sum_{(\log x)^{5/2}\leq n <x^{(1+b)/2}} \beta^{n}\widetilde{\mathbb{P}}\left( x-S_n\in I, H_n \right) =: \widehat{T}_3. 
\end{align}
Thus it remains to prove (i) and (ii) with $T_3$ replaced by $\widehat{T}_3$. 
According to the definition of $H_n$ in \eqref{def-of-H}, we have the following upper bound 
\begin{align}\label{decomposition-T-3}
	&\widehat{T}_3 \leq \sum_{(\log x)^{5/2}\leq n <x^{(1+b)/2}} \beta^{n}\widetilde{\mathbb{P}}\left( x-S_n\in I, \max_{1\leq i\leq n} X_i \leq x-2x^{\varepsilon_0}  \right)  \nonumber\\
	&\qquad +\sum_{(\log x)^{5/2}\leq n <x^{(1+b)/2}} \beta^{n}\widetilde{\mathbb{P}}\left( x-S_n\in I, \exists ! \ 1\leq j\leq n, X_j>x-2x^{\varepsilon_0} , |S_n-X_j| >x^{\varepsilon_0}\right) \nonumber\\
	&\qquad +\sum_{(\log x)^{5/2}\leq n <x^{(1+b)/2}} \beta^{n}\widetilde{\mathbb{P}}\left( x-S_n\in I, \exists ! \ 1\leq j\leq n, X_j>x-2x^{\varepsilon_0} , |S_n-X_j| \leq x^{\varepsilon_0} \right) \nonumber\\
	&=: \widehat{T}_{31}+ \widehat{T}_{32}+ \widehat{T}_{33}.
\end{align}

\noindent
{\bf Estimate for $\widehat{T}_{31}$}. Let $z= k\delta \in [1,x]$ in the proof of (i) and $z=q$ in the proof of (ii). We may assume also in (ii) that $x$ is large enough such that $z\leq x$. 
Combining the same argument as \eqref{e33-2} (with the event $S_n>x$ replaced by $S_n>x-z$) and Lemma  \ref{lem2}, we have 
\begin{align}
	& \frac{e^{\lambda x^b}}{\ell(x)x^a } \widehat{T}_{31} \leq \frac{e^{\lambda x^b}}{\ell(x)x^a } \sum_{(\log x)^{5/2}\leq n<x^{(1+b)/2}}  \beta^{n}\widetilde{\mathbb{P}}\left( S_n> x-z, \max_{1\leq i\leq n} X_i \leq x-2x^{\varepsilon_0}\right) \nonumber\\
	& \leq \frac{e^{\lambda x^b}}{\ell(x)x^a } \sum_{(\log x)^{5/2}\leq n<x^{(1+b)/2}}  \beta^{n} e^{-\theta(x)(x-z -n\widetilde{\mu})+C_*\theta^2(x)n} \nonumber\\
	& = \frac{e^{\theta(x)z}}{\ell(x)x^a } \sum_{(\log x)^{5/2}\leq n<x^{(1+b)/2}}  \beta^{n} e^{ (\log x)^2+ (\widetilde{\mu} \theta(x)+C_*\theta^2(x))n} ,
\end{align}
where $\widetilde{\mu}:= \widetilde{\mathbb{E}}(X)$. Applying inequality $\ell (x) x^{a}\gtrsim x^{-|a|-1}$ and  taking $x$
sufficiently large such that 
$n^{-1/5}+ (|a|+1)n^{-3/5}+ C_*\theta^2(x)+\widetilde{\mu} \theta(x)\leq -\frac{1}{3}\log \beta$ for all $n\geq  (\log x)^{5/2}$,  we obtain 
\begin{align}\label{e26-1}
	& \frac{e^{\lambda x^b}}{\ell(x)x^a } \widehat{T}_{31} \lesssim e^{ \lambda x^{b-1}z}  \sum_{(\log x)^{5/2}\leq n<x^{(1+b)/2}}  \beta^{n} e^{ (|a|+1)\log x+ (\log x)^2+ (\widetilde{\mu} \theta(x)+C_*\theta^2(x))n} \nonumber\\
	& \leq e^{ \lambda x^{b-1}z}  \sum_{(\log x)^{5/2}\leq n<x^{(1+b)/2}}  \beta^{n} e^{  (n^{-1/5}+(|a|+1)n^{-3/5}+\widetilde{\mu} \theta(x)+C_*\theta^2(x))n} \nonumber\\
	& \leq e^{ \lambda x^{b-1}z}  \sum_{(\log x)^{5/2}\leq n<x^{(1+b)/2}}  \beta^{2n/3} \nonumber\\
	&\lesssim_\beta  e^{ \lambda  z^b } \beta ^{2(\log x)^{5/2}/3},
\end{align}
where in the last inequality we used the inequality $z\leq x$.
Therefore, in the proof of (ii), we have 
\begin{align}\label{e26-2'}
	\limsup_{x\to+\infty}   \frac{e^{\lambda x^b}}{\ell(x)x^a } \widehat{T}_{31} \lesssim_\beta  \lim_{x\to+\infty}  e^{ \lambda  q^b } \beta ^{2(\log x)^{5/2}/3}=0.
\end{align}
For (i), using the fact  that $\frac{1}{\ell(z) z^a}\gtrsim z^{-|a|-1}\geq x^{-|a|-1}$ for $1\leq z \leq x$, we conclude that for $k\delta \leq x $,
\begin{align}\label{e26-2}
	& \frac{e^{\lambda x^b}}{\ell(x)x^a } \widehat{T}_{31} \lesssim_\beta \frac{e^{ \lambda (k\delta)^{b}  }}{\ell(k\delta)(k\delta)^a} x^{|a|+1} \beta ^{2(\log x)^{5/2}/3} \lesssim_\beta  \frac{e^{ \lambda (k\delta)^{b}  }}{\ell(k\delta)(k\delta)^a} .
\end{align}

\noindent
{\bf Estimate for $\widehat{T}_{32}$}.  Noticing that 
\begin{align}
	&  \frac{e^{\lambda x^b}}{\ell(x)x^a } \widehat{T}_{32}   \leq \frac{e^{\lambda x^b}}{\ell(x)x^a } \sum_{(\log x)^{5/2}\leq n<x^{(1+b)/2}}  n \beta^{n}\widetilde{\mathbb{P}}\left(X_n > x-2x^{\varepsilon_0}, |S_n-X_n|>x^{\varepsilon_0}\right) \nonumber\\
	& \leq \frac{e^{\lambda x^b}}{\ell(x)x^a } \widetilde{\mathbb{P}}(X>x-2x^{\varepsilon_0}) \sum_{(\log x)^{5/2}\leq n<x^{(1+b)/2}}  n^2 \beta^{n} \widetilde{\mathbb{P}}(|X|>x^{\varepsilon_0}/n).
\end{align}
Since $\lim_{x\to+\infty} ((x-2x^{\varepsilon_0})^b-x^b)=0$, combining Lemma \ref{lem1}, Markov's inequality and the fact that $\widetilde{\mathbb{E}}(|X|^k)<\infty$ for all $k>0$,  let $k=k_0=1/\varepsilon_0$, then $k_0 \varepsilon_0 >1-b$ and the above inequality is bounded from above by
\begin{align}\label{e30-1}
	& \frac{e^{\lambda x^b}}{\ell(x)x^a } \widehat{T}_{32} \lesssim  \frac{\ell(x) x^{a+1-b}}{\ell(x)x^a } \frac{\widetilde{\mathbb{E}}(|X|^{k_0})}{(x^{\varepsilon_0})^{k_0}} \sum_{n=1}^\infty n^{k_0+2} \beta^n \lesssim_\beta  x^{-b}.
\end{align}

\noindent
{\bf Estimate for $\widehat{T}_{33}$}.
If $I=[(k-1)\delta, k\delta)$, then we have the following upper bound for $\widehat{T}_{33}$:
\begin{align}\label{e27-2}
	&  \widehat{T}_{33} \leq   \sum_{(\log x)^{5/2}\leq n<x^{(1+b)/2}}  n \beta^{n} \widetilde{\mathbb{P}}\left(x-3x^{\varepsilon_0}<S_n\leq x-k\delta+1 \right).
\end{align}
Therefore, if $k\delta-1 >3x^{\varepsilon_0}$, then $\widehat{T}_{33} =0$.  If $k\delta\leq 3x^{\varepsilon_0}+1$ then
\begin{align}
	&  \widehat{T}_{33} \leq     \sum_{(\log x)^{5/2}\leq n<x^{(1+b)/2}}  n \beta^{n} \widetilde{\mathbb{P}}\left(S_n\in (x-k\delta, x-k\delta+1],   |S_{n-1}|\leq x^{\varepsilon_0}\right)\nonumber\\
	& \lesssim_\beta   \beta^{(\log x)^{5/2}/2}   \sup_{|z| \leq x^{\varepsilon_0} } \widetilde{\mathbb{P}}(X\in (x-z-k\delta, x-z-k\delta+1))   \nonumber\\
	&\leq \sup_{|z|\leq 4x^{\varepsilon_0} +1} \widetilde{\mathbb{P}}(X\in (x-z-1, x-z)) .
\end{align}
According to standard analysis, we have 
\begin{align}\label{e26-4}
	& \frac{e^{\lambda x^b}}{\ell(x)x^a } \widehat{T}_{33} \lesssim_\beta   \frac{e^{\lambda x^b}}{\ell(x)x^a } \sup_{|z| \leq 4x^{\varepsilon_0} +1}  \int_{x-z-1}^{x-z} \frac{\ell(y)}{\mathbb{E}(e^{\gamma X})} y^a e^{-\lambda y^b}\mathrm{d} y \nonumber\\
	& \lesssim e^{\lambda x^b-\lambda(x-6x^{\varepsilon_0}-2)^b} \lesssim 1 .
\end{align}
Combining \eqref{decomposition-T-3}, \eqref{e26-2}, \eqref{e30-1} and \eqref{e26-4} we get (i).

If $I= [p, q)$, let $x$ be sufficiently large such that $q\leq x^{\varepsilon_0}$. Then by Lemma \ref{lem1},
\begin{align}\label{e29-2}
	& \limsup_{x\to+\infty} \frac{e^{\lambda x^b}}{\ell(x)x^a }  \widehat{T}_{33} \nonumber\\
	&\leq     \limsup_{x\to+\infty}  \frac{e^{\lambda x^b}}{\ell(x)x^a } \sum_{(\log x)^{5/2}\leq n<x^{(1+b)/2}}  n \beta^{n} \widetilde{\mathbb{P}}\left(S_n>x-q,   |S_{n-1}|\leq x^{\varepsilon_0}\right)\nonumber\\
	& \lesssim_\beta  \limsup_{x\to+\infty}  \frac{e^{\lambda x^b}\beta^{(\log x)^{5/2}/2}  }{\ell(x)x^a }  \sup_{|z| \leq 2x^{\varepsilon_0} } \widetilde{\mathbb{P}}(X>x-z)    \lesssim \limsup_{x\to+\infty} \beta^{(\log x)^{5/2}/2} x^{1-b}  \nonumber\\
	&= 0. 
\end{align}
Combining  \eqref{decomposition-T-3}, \eqref{e26-2'}, \eqref{e30-1} and \eqref{e29-2}, we get (ii).

\hfill$\Box$

Next we treat $T_4$. We have the following decomposition:
\begin{align}\label{decomposition-T-4}
	&T_4= \sum_{n< (\log x)^{5/2}}  \beta^{n}\widetilde{\mathbb{P}}\left(\max_{1\leq j \leq n} S_j \leq x, x-S_n\in I,  \max_{1\leq i\leq n} X_i \leq 2x^{\varepsilon_0}\right) \nonumber\\
	&\qquad + \sum_{n< (\log x)^{5/2}}  \beta^{n}\widetilde{\mathbb{P}}\left(\max_{1\leq j \leq n} S_j \leq x, x-S_n\in I, 2x^{\varepsilon_0}<\max_{1\leq i\leq n} X_i  \leq x-2x^{\varepsilon_0}\right)\nonumber\\
	&\qquad +  \sum_{n< (\log x)^{5/2}}  \beta^{n}  \widetilde{\mathbb{P}}\left(\max_{1\leq j \leq n} S_j \leq x, x-S_n\in I,  \exists !\ 1\leq j\leq n,\  X_j > x-2x^{\varepsilon_0}\right)\nonumber\\
	&=: T_{41}+T_{42}+T_{43}. 
\end{align}
Noticing that uniformly for $x\geq k\delta >x- 2(\log x)^{5/2} x^{\varepsilon_0}$,  $\lim_{x\to+\infty} (x^b- (k\delta)^b)= 0$ and this implies that 
\begin{align}\label{e34-1}
	& \frac{e^{\lambda x^b}}{\ell(x)x^a } T_{4} \leq   \frac{e^{\lambda x^b}}{\ell(x)x^a }  \sum_{n< (\log x)^{5/2}}  \beta^{n}   \lesssim_\beta  \frac{e^{\lambda x^b}}{\ell(k\delta) (k\delta)^a }   \lesssim \frac{e^{\lambda (k\delta)^b}}{\ell(k\delta) (k\delta)^a}.
\end{align}

\begin{lemma}\label{lem5-3}
	Assume {\bf(H2)} and $b\in (0,1)$. 
	\begin{itemize}
		\item[(i)] For all $k\delta\geq K_0, I=[(k-1)\delta, k\delta)$ and $x>K_0$, 
		\begin{align}
			\frac{e^{\lambda x^b}}{\ell(x)x^a } T_{41} \lesssim_\beta  \frac{e^{\lambda (k\delta)^b}}{\ell(k\delta) (k\delta)^a}.
		\end{align}
		\item[(ii)] For each $I=[p,q)$, we have 
		\begin{align}
			\lim_{x\to+\infty}\frac{e^{\lambda x^b}}{\ell(x)x^a } T_{41}  =0. 
		\end{align}
	\end{itemize}
\end{lemma}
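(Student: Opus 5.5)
The key observation is that on the event defining $T_{41}$ all increments satisfy $X_i\le 2x^{\varepsilon_0}$ and $n<(\log x)^{5/2}$. Hence
\[
S_n\le 2n x^{\varepsilon_0}\le 2(\log x)^{5/2}x^{\varepsilon_0},
\qquad\text{so}\qquad
x-S_n\ge x-2(\log x)^{5/2}x^{\varepsilon_0}.
\]
So $T_{41}$ vanishes unless the interval $I$ reaches up to this level. The plan is to exploit this in two ways.

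For (ii), $I=[p,q)$ is fixed. Once $x$ is so large that $x-2(\log x)^{5/2}x^{\varepsilon_0}\ge q$, the event $\{x-S_n\in[p,q)\}$ is incompatible with $\max_{i\le n}X_i\le 2x^{\varepsilon_0}$. Therefore $T_{41}=0$ for all large $x$, and the limit is trivially $0$.

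For (i), with $I=[(k-1)\delta,k\delta)$, the same inclusion shows $T_{41}=0$ unless
\[
k\delta> x-2(\log x)^{5/2}x^{\varepsilon_0}.
\]
We split into two cases.
\begin{itemize}
\item If $k\delta>x$, Lemma \ref{lem5-1}(ii) already bounds $\frac{e^{\lambda x^b}}{\ell(x)x^a}(T_3+T_4)$, and hence $T_{41}\le T_4$, by the desired quantity.
\item If $x-2(\log x)^{5/2}x^{\varepsilon_0}<k\delta\le x$, we use the crude bound $T_{41}\le\sum_n\beta^n\lesssim_\beta 1$, which is exactly \eqref{e34-1}. Since $2(\log x)^{5/2}x^{\varepsilon_0}=o(x^{1-b})$ (because $\varepsilon_0<1-b$), we get $x^b-(k\delta)^b\to0$ uniformly. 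Also $\ell(k\delta)(k\delta)^a/(\ell(x)x^a)\to1$ uniformly by Lemma \ref{lem-useful-fact}(i). Together these give $\frac{e^{\lambda x^b}}{\ell(x)x^a}\lesssim\frac{e^{\lambda(k\delta)^b}}{\ell(k\delta)(k\delta)^a}$.
\end{itemize}

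The statement has to hold for all $x>K_0$, not only for large $x$. For $x$ in a bounded range $(K_0,K]$, the prefactor $\frac{e^{\lambda x^b}}{\ell(x)x^a}$ is bounded by \eqref{Def-of-K-0}, so $T_{41}\lesssim 1$ there. In that range we also have $k\delta\lesssim x+1$ or $T_{41}=0$, and $\frac{e^{\lambda(k\delta)^b}}{\ell(k\delta)(k\delta)^a}\gtrsim 1$ for $k\delta\ge K_0$ bounded, again by \eqref{Def-of-K-0}. If $k\delta>x$, Lemma \ref{lem5-1}(ii) applies directly.

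The only delicate point is making the uniformity in the second case of (i) precise: $(k\delta)^b-x^b$ and the slowly varying ratio must be controlled uniformly over the window of width $2(\log x)^{5/2}x^{\varepsilon_0}$. This follows from the mean value theorem, $|x^b-y^b|\le b\,y^{b-1}|x-y|$, together with $\varepsilon_0\le(1-b)/2$.
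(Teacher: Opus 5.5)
Your proof is correct and follows the paper's own argument. On the event defining $T_{41}$ one has $S_n\le 2nx^{\varepsilon_0}$, so $T_{41}=0$ whenever $k\delta$ (resp.\ $q$) is at most $x-2(\log x)^{5/2}x^{\varepsilon_0}$; the remaining ranges are handled by Lemma \ref{lem5-1}(ii) for $k\delta>x$ and by the crude bound \eqref{e34-1} for $x-2(\log x)^{5/2}x^{\varepsilon_0}<k\delta\le x$. Your mean-value-theorem uniformity check and your treatment of bounded $x$ simply make explicit details the paper leaves implicit.
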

\textbf{Proof: }  Combining Lemma \ref{lem5-1} (ii), \eqref{decomposition-T-4} and \eqref{e34-1}, it suffices to consider the case $1\leq k\delta \leq x-2(\log x)^{5/2} x^{\varepsilon_0}$ in the proof of (i). Set $z=k\delta$ if $I=[(k-1)\delta, k\delta)$ and $z=q$ if $I=[p,q)$. Then for $x$ large enough such that   
$z \leq x-2(\log x)^{5/2} x^{\varepsilon_0}$, we have 
\begin{align}\label{e34-2}
	&T_{41} \leq \sum_{n<(\log x)^{5/2}}  \beta^{n}\widetilde{\mathbb{P}}\left(  S_n>x- z,   S_n \leq 2nx^{\varepsilon_0}\right) \nonumber\\
	& \leq \sum_{n<(\log x)^{5/2}}  \beta^{n} \widetilde{\mathbb{P}}\left(  S_n>2(\log x)^{5/2} x^{\varepsilon_0},   S_n \leq 2(\log x)^{5/2}x^{\varepsilon_0}\right)=0,
\end{align}
which completes the proof of the lemma. 

\hfill$\Box$

\begin{lemma}\label{lem5-4}
	Assume {\bf(H2)} and $b\in (0,1)$. 
	\begin{itemize}
		\item[(i)] For all $k\delta\geq K_0, I=[(k-1)\delta, k\delta)$ and $x>K_0$, 
		\begin{align}
			\frac{e^{\lambda x^b}}{\ell(x)x^a } T_{42} \lesssim_\beta  \frac{e^{\lambda (k\delta)^b}}{\ell(k\delta) (k\delta)^a}.
		\end{align}
		\item[(ii)] For each $I=[p,q)$, we have 
		\begin{align}
			\lim_{x\to+\infty}\frac{e^{\lambda x^b}}{\ell(x)x^a } T_{42}  =0. 
		\end{align}
	\end{itemize}
\end{lemma}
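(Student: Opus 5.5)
We bound $T_{42}$ by dropping the constraint $\max_j S_j\le x$ and using a union bound over the index carrying the intermediate jump. On the event defining $T_{42}$ there is some $j\le n$ with $2x^{\varepsilon_0}<X_j\le x-2x^{\varepsilon_0}$. Writing $z=k\delta$ in (i) and $z=q$ in (ii), the condition $x-S_n\in I$ forces $S_n>x-z$. Hence
\begin{align}
T_{42}\le \sum_{n<(\log x)^{5/2}} n\beta^n\,\widetilde{\mathbb{P}}\left(S_{n-1}> x-z-X_n,\ 2x^{\varepsilon_0}<X_n\le x-2x^{\varepsilon_0}\right).
\end{align}
The reductions are as in Lemma \ref{lem5-3}. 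By Lemma \ref{lem5-1} (ii) we may assume $k\delta\le x$. By \eqref{e34-1} we may further assume $k\delta\le x-2(\log x)^{5/2}x^{\varepsilon_0}$, since otherwise the bound on all of $T_4$ already suffices. In (ii), $z=q$ is fixed.

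We then condition on $X_n$ and split according to the size of $y:=x-z-X_n$.

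\textbf{Case $y\ge 2x^{\varepsilon_0}$.} Apply \eqref{e6} to $(X,\widetilde{\mathbb P})$, which satisfies {\bf(H1)} by Lemma \ref{lem1}; this is legitimate because $n<(\log x)^{5/2}\le(\log y)^3$ for large $x$. It gives $\widetilde{\mathbb P}(S_{n-1}>y)\le 2n\widetilde{\mathbb P}(X>y)\lesssim n\,y^{|a|+2-b}e^{-\lambda y^b}$. The remaining expectation then has the form
\begin{align}
\widetilde{\mathbb E}\left(1_{\{2x^{\varepsilon_0}<X\le x-z-2x^{\varepsilon_0}\}}\,e^{-\lambda(x-z-X)^b}\right)\cdot\mathrm{poly}(x).
\end{align}
This is exactly the convolution sum treated in \eqref{e10-1}, with $x$ replaced by $x-z$. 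It yields the bound $\mathrm{poly}(x)\,e^{-\lambda(x-z)^b-\lambda(2x^{\varepsilon_0})^b}$.

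\textbf{Case $y<2x^{\varepsilon_0}$.} Here $X_n>x-z-2x^{\varepsilon_0}$, which is compatible with $X_n\le x-2x^{\varepsilon_0}$ only when $z>0$. We bound this part crudely by $\widetilde{\mathbb P}(X>x-z-2x^{\varepsilon_0})\lesssim \mathrm{poly}(x)e^{-\lambda(x-z-2x^{\varepsilon_0})^b}$, and retain the factor coming from $S_{n-1}\ge y$. Alternatively, we can enlarge the first case so that it covers $y\ge x^{\varepsilon_0}$, and in the complement $\{X_n>x-z-x^{\varepsilon_0}\}$ use the bound $\widetilde{\mathbb P}(S_{n-1}>0)\le 1$.

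The key step is comparing the resulting bound with the target. Multiplying by $e^{\lambda x^b}/(\ell(x)x^a)$, we need
\begin{align}
e^{\lambda x^b-\lambda(x-z)^b}\le e^{\lambda z^b},
\end{align}
which holds by subadditivity of $t\mapsto t^b$. This gives a bound of the form $\mathrm{poly}(x)\,e^{\lambda z^b}\,e^{-\lambda(2x^{\varepsilon_0})^b}$, and the factor $e^{-\lambda(2x^{\varepsilon_0})^b}$ kills every polynomial in $x$. Since $1/(\ell(z)z^a)\gtrsim z^{-|a|-1}\ge x^{-|a|-1}$ for $1\le z\le x$, this yields (i) with room to spare. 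For (ii), $z=q$ is fixed, so the whole expression tends to $0$.

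The main obstacle is the boundary regime in which $X_n$ is close to $x-z-2x^{\varepsilon_0}$ and $z$ is comparable to $x$, where neither jump is clearly dominant. In that regime the sum $(x-z-X_n)^b+X_n^b$ must still exceed $(x-z)^b$ by at least $c\,x^{\varepsilon_0 b}$. If $z$ is so large that $x-z<4x^{\varepsilon_0}$, the range of $X_n$ is empty and $T_{42}=0$. Otherwise the minimum of $t^b+(x-z-t)^b$ over $t\in[2x^{\varepsilon_0},x-z-2x^{\varepsilon_0}]$ is attained at an endpoint and equals at least $(x-z)^b-o(1)+(2x^{\varepsilon_0})^b$. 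To justify the $o(1)$ uniformly in $z$, I would use the concavity bound $(x-z-2x^{\varepsilon_0})^b\ge (x-z)^b-2b\,x^{\varepsilon_0}(x-z)^{b-1}$. When $x-z$ is small, this correction is at most of order $x^{\varepsilon_0 b}$, which is still dominated by $(2x^{\varepsilon_0})^b$ up to a constant factor; I would handle this by replacing $2x^{\varepsilon_0}$ by a slightly smaller cutoff if needed.
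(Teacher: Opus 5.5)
Your union bound, the reduction to $k\delta\le x-2(\log x)^{5/2}x^{\varepsilon_0}$, and your treatment of the region $y=x-z-X_n\ge x^{\varepsilon_0}$ match the paper's estimate of $T_{42}'$. With your alternative cut at $y=x^{\varepsilon_0}$ this already gives (ii), since for $z=q<x^{\varepsilon_0}$ the complementary region is empty.

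The uniformity issue you call the main obstacle is harmless. Note that the inequality you quote goes the wrong way for a concave function. The correct one, $(L-h)^b\ge L^b-bh(L-h)^{b-1}$ with $h=2x^{\varepsilon_0}$ and $L-h\ge 2x^{\varepsilon_0}$, still leaves a saving of $(1-b)(2x^{\varepsilon_0})^b$. The paper avoids the issue entirely via $(x-x^{\varepsilon_0})^b\le (k\delta)^b+(x-x^{\varepsilon_0}-k\delta)^b$.

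The genuine gap is in the complementary region $x-z-x^{\varepsilon_0}<X_n\le x-2x^{\varepsilon_0}$ (the paper's $T_{42}''$), where a single jump of size about $x-z$ does all the work.
\begin{itemize}
\item There your bound is $\mathrm{poly}(x)\,e^{-\lambda(x-z-x^{\varepsilon_0})^b}$ times $\widetilde{\mathbb P}(S_{n-1}>y)$.
\item The second factor is not small once $y\le 0$, which happens as soon as $z\ge 2x^{\varepsilon_0}$; in your alternative you bound it by $1$ anyway.
\item Your key step, plain subadditivity, then only gives $e^{\lambda x^b-\lambda(x-z-x^{\varepsilon_0})^b}\le e^{\lambda (z+x^{\varepsilon_0})^b}$. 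For $z\asymp x^{\varepsilon_0}$ this is larger than $e^{\lambda z^b}$ by a factor $e^{c x^{\varepsilon_0 b}}$.
\item There is no second jump of size $2x^{\varepsilon_0}$ in this regime, so nothing in your argument produces the claimed factor $e^{-\lambda(2x^{\varepsilon_0})^b}$.
\end{itemize}
Since the tail bound for $X_n$ already loses polynomial factors, a stretched-exponential saving is genuinely needed here, and the proposal does not supply one.

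The missing ingredient is a quantitative strict subadditivity: uniformly for $x^{\varepsilon_0}\le z\le x-2(\log x)^{5/2}x^{\varepsilon_0}$,
\[
z^b+(x-x^{\varepsilon_0}-z)^b\ \ge\ x^b+x^{\varepsilon_0 b}-o(1).
\]
This follows from concavity of $z\mapsto z^b+(x-x^{\varepsilon_0}-z)^b$ by checking the two endpoints, using $\varepsilon_0<1-b$ and the lower bound on $x-z$. With it, your alternative split closes. For $z\le x^{\varepsilon_0}$ the second region is empty. For $z>x^{\varepsilon_0}$ the factor $e^{-\lambda x^{\varepsilon_0 b}}$ absorbs all polynomial losses, including $1/(\ell(z)z^a)\gtrsim x^{-|a|-1}$.

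The paper treats this regime differently. It keeps the window $x-S_n\in I$ and removes $\{S_{n-1}<-x^{\varepsilon_0}\}$ using the exponential left tail ($R_{42}''$). On the rest ($Q_{42}''$) it confines $X_n$ to a unit interval, which costs no polynomial factor. For $k\delta<4x^{\varepsilon_0}$ this gives $O(1)$ directly. For $k\delta\ge 4x^{\varepsilon_0}$ it uses exactly the strict-subadditivity bound $y^b+(x-y-2x^{\varepsilon_0})^b\ge x^b+(2x^{\varepsilon_0})^b-1$.

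Once you add the gap inequality, your route is a valid and somewhat shorter alternative. It avoids both the $R_{42}''/Q_{42}''$ split and the unit-window estimate.
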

\textbf{Proof: }  We divide the proof into two steps.

\noindent
{\bf(Step 1)}. In this step, we prove (ii). Noticing that 
\begin{align}
	T_{42} & \leq \sum_{n< (\log x)^{5/2}}  n\beta^{n}\widetilde{\mathbb{P}}\left( S_{n-1}>x-q-X_n,   2x^{\varepsilon_0}< X_n \leq x-2x^{\varepsilon_0}\right) \nonumber\\
	& = \sum_{n< (\log x)^{5/2}}  n\beta^{n}\widetilde{\mathbb{E}}\left(1_{\{ 2x^{\varepsilon_0}< X_n \leq x-2x^{\varepsilon_0}\}}  \widetilde{\mathbb{P}} (S_{n-1}>y)|_{y= x-q-X_n}   \right).
\end{align}
Let $x$ be sufficiently large such that $x^{\varepsilon_0}>q$, then  $y=x-q-X_{n} > x^{\varepsilon_0}$ on $\{ 2x^{\varepsilon_0}< X_n \leq x-2x^{\varepsilon_0}\} $. Therefore, by \eqref{e6}, for $x$ large enough such that $ (\log x^{\varepsilon_0})^3> (\log x)^{5/2}$, we have for $x$ large enough, 
\begin{align}
	&T_{42}\leq 2\sum_{n< (\log x)^{5/2}}  n^2\beta^{n}\widetilde{\mathbb{E}}\left(1_{\{ 2x^{\varepsilon_0}< X_n \leq x-2x^{\varepsilon_0}\}}  \widetilde{\mathbb{P}} (X>y)|_{y= x-q-X_n}   \right).
\end{align}
Combining Lemma \ref{lem1} and inequalities $\ell(x)x^a \gtrsim x^{-|a|-1}$ and  $\widetilde{\ell}(x)x^{a+1-b}\lesssim x^{|a|+2-b}$, we deduce that 
\begin{align}\label{e34-3}
	& \limsup_{x\to+\infty} \frac{e^{\lambda x^b}}{\ell(x)x^a } T_{42} \nonumber\\
	& \lesssim \limsup_{x\to+\infty} e^{\lambda x^b}x^{|a|+1}\sum_{n< (\log x)^{5/2}}  n^2\beta^{n}\widetilde{\mathbb{E}}\left(1_{\{ 2x^{\varepsilon_0}< X_n \leq x-2x^{\varepsilon_0}\}} (x-q-X_n)^{|a|+2-b} e^{-\lambda (x-q-X_n)^b}   \right) \nonumber\\
	&\lesssim_\beta  \limsup_{x\to+\infty} e^{\lambda x^b}x^{2|a|+3-b} \widetilde{\mathbb{E}}\left(1_{\{ 2x^{\varepsilon_0}< X \leq x-2x^{\varepsilon_0}\}}   e^{-\lambda (x-q-X)^b}   \right) \nonumber\\
	&\lesssim e^{\lambda q^b} \limsup_{x\to+\infty} e^{\lambda x^b}x^{2|a|+3-b} \widetilde{\mathbb{E}}\left(1_{\{ 2x^{\varepsilon_0}< X \leq x-2x^{\varepsilon_0}\}}   e^{-\lambda (x-X)^b}   \right) ,
\end{align}
where in the last inequality we used inequality $(u+v)^b\leq u^b+ v^b$ for $u,v>0$ and $b\in (0,1)$.  Combining \eqref{e10-1} (with $a=0$ and noticing that $|x-X|\geq 2x^{\varepsilon_0}$ in on $\{ 2x^{\varepsilon_0}< X \leq x-2x^{\varepsilon_0}\}$) and \eqref{e34-3}, we get (ii). 

\noindent
{\bf(Step 2)}. In this step, we prove (i). Combining Lemma \ref{lem5-1} (ii), \eqref{decomposition-T-4} and  \eqref{e34-1}, it suffices to consider the case $1\leq k\delta \leq x- 2(\log x)^{5/2} x^{\varepsilon_0}$.
We have the upper bound 
\begin{align}\label{decomposition-T-42}
	&T_{42}\leq \sum_{n< (\log x)^{5/2}}  n\beta^{n}\widetilde{\mathbb{P}}\left( x-S_n\in I,      2x^{\varepsilon_0}< X_n \leq x-2x^{\varepsilon_0}\right) \nonumber\\
	&\leq \sum_{n< (\log x)^{5/2}}  n\beta^{n}\widetilde{\mathbb{P}}\left( x-S_n<k\delta,    x-k\delta-X_n>x^{\varepsilon_0},   2x^{\varepsilon_0}< X_n \leq x-2x^{\varepsilon_0}\right) \nonumber\\
	&\qquad + \sum_{n< (\log x)^{5/2}}  n\beta^{n}\widetilde{\mathbb{P}}\left( x-S_n\in I,    x-k\delta-X_n\leq x^{\varepsilon_0},   2x^{\varepsilon_0}< X_n \leq x-2x^{\varepsilon_0}\right) \nonumber\\
	&=: T_{42}' + T_{42}''. 
\end{align}

\noindent
{\bf Estimate for $T_{42}'$}.  According to the independence between $S_{n-1}$ and $X_n$, we have 
\begin{align}
	&T_{42}' = \sum_{n< (\log x)^{5/2}}  n\beta^{n}\widetilde{\mathbb{E}}\left(1_{\{x-k\delta-X_n>x^{\varepsilon_0},   2x^{\varepsilon_0}< X_n \leq x-2x^{\varepsilon_0} \}}  \widetilde{\mathbb{P}}(S_{n-1}>y)|_{y= x-k\delta-X_n}    \right) . 
\end{align}
Similar to \eqref{e34-3}, let $x$ be large enough such that $ (\log x^{\varepsilon_0})^3> (\log x)^{5/2}$,  it follows from Lemma \ref{lem1} and inequalities $\ell(x)x^a \gtrsim x^{-|a|-1}$ and  $\widetilde{\ell}(x)x^{a+1-b}\lesssim x^{|a|+2-b}$ that 
\begin{align}\label{e28-1}
	& \frac{e^{\lambda x^b} }{\ell(x)x^a } T_{42}'\nonumber\\
	&\lesssim  x^{|a|+1} e^{\lambda x^b}\sum_{n< (\log x)^{5/2}}  n^2\beta^{n}\widetilde{\mathbb{E}}\left(1_{\{x-k\delta-X_n>x^{\varepsilon_0},   2x^{\varepsilon_0}< X_n \leq x-2x^{\varepsilon_0} \}}  \widetilde{\mathbb{P}}(X>y)|_{y= x-k\delta-X_n}    \right) \nonumber\\
	&\lesssim_\beta  x^{|a|+1} e^{\lambda x^b}\widetilde{\mathbb{E}}\left(1_{\{x-k\delta-X_n>x^{\varepsilon_0},   2x^{\varepsilon_0}< X_n \leq x-2x^{\varepsilon_0} \}}  ( x-k\delta-X_n)^{|a|+2-b} e^{-\lambda ( x-k\delta-X_n)^b}   \right) \nonumber\\
	&\lesssim x^{2|a|+3-b} e^{\lambda x^b}\widetilde{\mathbb{E}}\left(1_{\{x-k\delta- x^{\varepsilon_0} >x-k\delta-X>x^{\varepsilon_0}    \}}   e^{-\lambda ( x-k\delta-X)^b}   \right).
\end{align}
Since $x\geq x-k\delta-j \geq x^{\varepsilon_0}-1$ for all $j\leq \lceil x-k\delta -x^{\varepsilon_0}\rceil$,  it follows from inequality $\ell(x)\lesssim x$ and \eqref{density-X-tildeP} that 
\begin{align}
	& \widetilde{\mathbb{E}}\left( 1_{\{x-k\delta- x^{\varepsilon_0} >x-k\delta-X>x^{\varepsilon_0}    \}}    e^{-\lambda (x-k\delta-X)^b}\right) \nonumber\\
	& \leq   \sum_{j= \lfloor x^{\varepsilon_0}\rfloor}^{\lceil x-k\delta -x^{\varepsilon_0}\rceil}  e^{-\lambda j^b} \widetilde{\mathbb{P}}(x-k\delta-X \in [j, j+1]) \nonumber\\
	& \lesssim \sum_{j= \lfloor x^{\varepsilon_0}\rfloor}^{\lceil x-k\delta -x^{\varepsilon_0}\rceil}  e^{-\lambda j^b} (x-k\delta-j)^{|a|+1}e^{-\lambda (x-k\delta -j)^b} \nonumber\\
	&\lesssim  x^{|a|+1}\sum_{j= \lfloor x^{\varepsilon_0}\rfloor}^{\lceil x-k\delta -x^{\varepsilon_0}\rceil}  e^{-\lambda j^b -\lambda (x-k\delta -j)^b} .
\end{align}
Plugging this back to \eqref{e28-1} yields that 
\begin{align}\label{e27-4}
	& \frac{e^{\lambda x^b} }{\ell(x)x^a } T_{42}' \lesssim_\beta  x^{3|a|+4-b} e^{\lambda x^b}\sum_{j= \lfloor x^{\varepsilon_0}\rfloor}^{\lceil x-k\delta -x^{\varepsilon_0}\rceil}  e^{-\lambda j^b -\lambda (x-k\delta -j)^b} .
\end{align}
Noticing that  for $\lfloor x^{\varepsilon_0}\rfloor\leq j\leq \lceil x-k\delta -x^{\varepsilon_0}\rceil$, we have for large $x$, 
\begin{align}
	& j^b+ (x-j-k\delta)^b \nonumber\\
	& \geq \min\{ (\lfloor x^{\varepsilon_0}\rfloor)^b+ (x-\lfloor x^{\varepsilon_0}\rfloor-k\delta)^b , (\lceil x-k\delta -x^{\varepsilon_0}\rceil)^b + (x-k\delta -\lceil x-k\delta -x^{\varepsilon_0}\rceil)^b\} \nonumber\\
	&\geq ( x^{\varepsilon_0} )^b+ (x-  x^{\varepsilon_0} -k\delta)^b -1.
\end{align}
Plugging this back to \eqref{e27-4} and using the fact that $x^b-(x-x^{\varepsilon_0})^b \to 0$,  we have
\begin{align}
	&\frac{e^{\lambda x^b} }{\ell(x)x^a } T_{42}'  \lesssim_\beta x^{3|a|+4-b} e^{\lambda x^b}  \sum_{j= \lfloor x^{\varepsilon_0}\rfloor}^{\lceil x-k\delta -x^{\varepsilon_0}\rceil} e^{-\lambda ( x^{\varepsilon_0} )^b- \lambda (x-  x^{\varepsilon_0} -k\delta)^b}\nonumber\\
	&\lesssim x^{3|a|+5-b} e^{\lambda (x-x^{\varepsilon_0})^b -\lambda ( x^{\varepsilon_0} )^b- \lambda (x-  x^{\varepsilon_0} -k\delta)^b}.
\end{align}
Noticing that for $b\in (0,1)$,   $u^b+v^b \geq (u+v)^b$ for all $u,v>0$. Therefore, applying $u=k\delta, v= x-x^{\varepsilon_0}-k\delta$ in the above inequality, we deduce that for all $k\delta\leq x-2(\log x)^{5/2} x^{\varepsilon_0}$ and large $x$, 
\begin{align}\label{e28-3}
	& \frac{e^{\lambda x^b} }{\ell(x)x^a } T_{42}' \lesssim_\beta x^{3|a|+5-b} e^{\lambda (k\delta)^b -\lambda ( x^{\varepsilon_0} )^b } \lesssim \frac{x^{4|a|+6-b}}{\ell(k\delta) (k\delta)^a}e^{\lambda (k\delta)^b -\lambda ( x^{\varepsilon_0} )^b } \lesssim \frac{e^{\lambda (k\delta)^b }}{\ell(k\delta) (k\delta)^a}.
\end{align}

\noindent
{\bf Estimate for $T_{42}''$}.  Since $k\delta \leq x-2(\log x)^{5/2} x^{\varepsilon_0}< x-3x^{\varepsilon_0}$ for large $x$, we can rewrite $T_{42}''$ by 
\begin{align}\label{decom-T-42''}
	&T_{42}'' = \sum_{n< (\log x)^{5/2}}  n\beta^{n}\widetilde{\mathbb{P}}\left( x-S_n\in I,    x-k\delta-x^{\varepsilon_0}\leq X_n     \leq x-2x^{\varepsilon_0}\right)=:R_{42}''+S_{42}'',
\end{align}
where 
\begin{align}
	R_{42}'' & :=\sum_{n< (\log x)^{5/2}}  n\beta^{n}\widetilde{\mathbb{P}}\left( x-S_n\in I,  S_{n-1}<-x^{\varepsilon_0},  x-k\delta-x^{\varepsilon_0}\leq X_n     \leq x-2x^{\varepsilon_0}\right),\\
	Q_{42}''&:= \sum_{n< (\log x)^{5/2}}  n\beta^{n}\widetilde{\mathbb{P}}\left( x-S_n\in I,  S_{n-1}\geq -x^{\varepsilon_0},  x-k\delta-x^{\varepsilon_0}\leq X_n     \leq x-2x^{\varepsilon_0}\right).
\end{align}

\noindent
{\bf Estimate for $R_{42}''$.}
Combining inequality $\ell(x)\lesssim x$ Lemma \ref{lem1} and \eqref{left-tail-X}, 
\begin{align}
	& \frac{e^{\lambda x^b}}{\ell(x)x^a } R_{42}''  \leq \sum_{n< (\log x)^{5/2}}  n^2 \beta^{n}   \widetilde{\mathbb{P}}\left( X< -x^{{\varepsilon_0}}/n \right) \widetilde{\mathbb{P}}(X\geq x-k\delta-x^{\varepsilon_0}) \nonumber\\
	& \lesssim_\beta   (x-k\delta-x^{\varepsilon_0})^{a+2-b}e^{-\lambda (x-k\delta-x^{\varepsilon_0})^b} \widetilde{\mathbb{P}}\left( X< -x^{\varepsilon_0}/(\log x)^{5/2} \right)\nonumber\\
	&\lesssim e^{-\gamma x^{\varepsilon_0}/(\log x)^{5/2}} x^{|a|+2-b} e^{-\lambda (x-k\delta-x^{\varepsilon_0})^b},
\end{align}
which together with inequality $e^{\lambda x^b}\lesssim e^{\lambda (x-x^{\varepsilon_0})^b}\leq  e^{\lambda (x-x^{\varepsilon_0}-k\delta)^b+ \lambda(k\delta)^b}$  implies that 
\begin{align}\label{e28-7}
	& \frac{e^{\lambda x^b}}{\ell(x)x^a } R_{42}''    \lesssim_\beta  \frac{e^{\lambda x^b}x^{|a|+2-b}}{\ell(x)x^a }  e^{-\gamma x^{\varepsilon_0}/(\log x)^{5/2} -\lambda (x-k\delta-x^{\varepsilon_0})^b} \nonumber\\
	&\lesssim \frac{e^{\lambda (k\delta)^b}x^{|a|+2-b}}{\ell(x)x^a }  e^{-\gamma x^{\varepsilon_0}/(\log x)^{5/2}}   \nonumber\\
	&\lesssim \frac{e^{\lambda (k\delta)^b}}{\ell(k\delta)(k\delta)^a }. 
\end{align}

\noindent
{\bf Estimate for $Q_{42}''$.}
Noticing that on $\{x-S_n\in I\}\cap \{X_n\geq x-k\delta-x^{\varepsilon_0}\}$, we have 
\[
S_{n-1}\leq x-k\delta +1-X_n \leq 2x^{\varepsilon_0}.
\]
Thus, this combined with the event $\{S_{n-1}\geq -x^{\varepsilon_0}\}$ implies that 
\begin{align}\label{e28-4}
	Q_{42}'' \leq   \sum_{n< (\log x)^{5/2}}  n\beta^{n} \sup_{|z|\leq 2x^{\varepsilon_0}} \widetilde{\mathbb{P}}\left( X\in (x-k\delta-z, x-k\delta-z +1]\right).
\end{align}

\noindent
{\bf Case 1: $k\delta< 4x^{\varepsilon_0}$.}
In this case, combining \eqref{density-X-tildeP} and \eqref{e28-4}, we have 
\begin{align}\label{e28-5}
	& \frac{e^{\lambda x^b}}{\ell(x)x^a } Q_{42}''  \lesssim_\beta  \frac{e^{\lambda x^b}}{\ell(x)x^a }   \sup_{|z|\leq 6x^{\varepsilon_0}}  \ell(x-z)(x-z)^a e^{-\lambda (x-z)^b}  \lesssim 1 \lesssim \frac{e^{\lambda (k\delta)^b}}{\ell(k\delta)(k\delta)^a }.
\end{align}

\noindent
{\bf Case 2: $4x^{\varepsilon_0} \leq k\delta \leq x-2(\log x)^{5/2} x^{\varepsilon_0}<x-4x^{\varepsilon_0}$.}
In this case, it follows from \eqref{e28-4} that
\begin{align}
	&\frac{e^{\lambda x^b}}{\ell(x)x^a } Q_{42}''   \lesssim_\beta   x^{|a|+1} e^{\lambda x^b} \sup_{|z|\leq 2x^{\varepsilon_0}}  \ell(x-k\delta-z)(x-k\delta-z)^a e^{-\lambda (x-k\delta-z)^b}\nonumber\\
	&\lesssim x^{|a|+1} e^{\lambda x^b} \sup_{|z|\leq 2x^{\varepsilon_0}}   (x-k\delta-z)^{a+1} e^{-\lambda (x-k\delta-z)^b} \nonumber\\
	&\lesssim \frac{x^{3|a|+3} }{\ell(k\delta)(k\delta)^a}e^{\lambda x^b}  e^{-\lambda (x-k\delta-2x^{\varepsilon_0})^b}.
\end{align}
Since for $y\in[4x^{\varepsilon_0}, x-4x^{\varepsilon_0}]$, when $x$ is large enough,
\begin{align}
	 & y^b+ (x-y-2x^{\varepsilon_0})^b  \geq \min\{ (4x^{\varepsilon_0})^b+ (x-6x^{\varepsilon_0})^b, (x-4x^{\varepsilon_0})^b+ (2x^{\varepsilon_0})^b\} \nonumber\\
	& \geq x^b +(2x^{\varepsilon_0})^b-1.
\end{align}
Therefore, we conclude that 
\begin{align}\label{e28-6}
	& \frac{e^{\lambda x^b}}{\ell(x)x^a } Q_{42}''  \lesssim_\beta  \frac{x^{3|a|+3} }{\ell(k\delta)(k\delta)^a}e^{\lambda (k\delta)^b}  e^{-\lambda ( 2x^{\varepsilon_0})^b}\lesssim \frac{e^{\lambda (k\delta)^b}}{\ell(k\delta)(k\delta)^a }.
\end{align}
Combining \eqref{decom-T-42''}, \eqref{e28-7}, \eqref{e28-5} and \eqref{e28-6}, it holds that 
\begin{align}\label{e29-1}
	\frac{e^{\lambda x^b}}{\ell(x)x^a } T_{42}''   = \frac{e^{\lambda x^b}}{\ell(x)x^a } (R_{42}''+Q_{42}'')  \lesssim_\beta  \frac{e^{\lambda (k\delta)^b}}{\ell(k\delta)(k\delta)^a }.
\end{align}
Combining  \eqref{decomposition-T-42},  \eqref{e28-3} and \eqref{e29-1}, we complete the proof of (i). 

\hfill$\Box$

\noindent
\textbf{Proof of Proposition \ref{prop3} for $b\in (0,1)$:}
Combining \eqref{decomposition}, Lemmas \ref{lem5-1} and \ref{lem5-2}, to prove (i), it suffices to prove that for all $K_0\leq k\delta \leq x, I= [(k-1)\delta, k\delta)$ and $x>K_0$,
\begin{align}\label{Goal-1}
	\frac{ e^{\lambda x^b}}{\ell(x)x^a} T_4\lesssim_\beta \frac{e^{\lambda (k\delta)^b}}{\ell(k\delta) (k\delta)^a}.
\end{align}
Also, to prover (ii), it suffices to show that for each $I=[p,q)$,
\begin{align}\label{Goal-2}
	\lim_{x\to+\infty} \frac{ e^{\lambda x^b}}{\ell(x)x^a} T_4 = \frac{1}{\mathbb{E}(e^{\gamma X})} \int_p^q \sum_{j=0}^\infty \frac{\beta^{j+1}}{1-\beta} \widetilde{\mathbb{P}}\left(z> -\min_{0\leq k\leq j} S_k \right)\mathrm{d} z. 
\end{align}
Combining \eqref{decomposition-T-4}, \eqref{e34-1}, Lemmas \ref{lem5-3} and \ref{lem5-4}, to prove \eqref{Goal-1}, it suffices to prove that for all 
$K_0\leq k\delta \leq x-2(\log x)^{5/2}x^{\varepsilon_0}, I= [(k-1)\delta, k\delta)$ and $x>K_0$,
\begin{align}\label{Goal-1'}
	\frac{ e^{\lambda x^b}}{\ell(x)x^a} T_{43}\lesssim_\beta  \frac{e^{\lambda (k\delta)^b}}{\ell(k\delta) (k\delta)^a}.
\end{align}
Also, to prover \eqref{Goal-2}, it suffices to show that for each $I=[p,q)$,
\begin{align}\label{Goal-2'}
	\lim_{x\to+\infty} \frac{ e^{\lambda x^b}}{\ell(x)x^a} T_{43} = \frac{1}{\mathbb{E}(e^{\gamma X})}  \int_p^q \sum_{j=0}^\infty \frac{\beta^{j+1}}{1-\beta} \widetilde{\mathbb{P}}\left(z> -\min_{0\leq k\leq j} S_k \right)\mathrm{d} z. 
\end{align}
{\bf(Step 1)}. In this step, we prove \eqref{Goal-1'}.  
According to the definition of $T_{43}$, we have the upper bound 
\begin{align}\label{decomposition-T-43}
	&T_{43}\leq \sum_{n< (\log x)^{5/2}}  n\beta^{n}  \widetilde{\mathbb{P}}\left(S_n\in [x-k\delta, x-k\delta+1],  X_n > x-2x^{\varepsilon_0}\right)\nonumber\\
	&\leq \sum_{n< (\log x)^{5/2}}  n\beta^{n}  \widetilde{\mathbb{P}}\left( |S_{n-1}|>x^{\varepsilon_0},  X_n > x-2x^{\varepsilon_0}\right) \nonumber\\
	&\qquad +\sum_{n< (\log x)^{5/2}}  n\beta^{n}  \sup_{|z|\leq x^{\varepsilon_0}} \widetilde{\mathbb{P}}\left( X\in[x-k\delta-z, x-k\delta-z+1],  X > x-2x^{\varepsilon_0}\right)\nonumber\\
	&=: T_{43}'+T_{43}''.
\end{align}

\noindent
{\bf Estimate for $T_{43}'$.} Similar to \eqref{e30-1},  taking $k_0=1/\varepsilon_0$, it follows from   Markov's inequality and Lemma \ref{lem1} that 
\begin{align}\label{upp-L-1}
	&\frac{ e^{\lambda x^b}}{\ell(x)x^a} T_{43}' \leq \frac{\widetilde{\mathbb{P}}\left(  X > x-2x^{\varepsilon_0}\right) }{\ell(x)x^a e^{-\lambda x^b}} \sum_{n< (\log x)^{5/2}}  n\beta^{n}  \widetilde{\mathbb{P}}\left( |X|>x^{\varepsilon_0}/n\right) \nonumber\\
	&\lesssim \frac{\ell(x)x^{a+1-b}e^{-\lambda x^b}}{\ell(x)x^a e^{-\lambda x^b}} \sum_{n< (\log x)^{5/2}}  n\beta^{n}  \frac{\widetilde{\mathbb{E}}\left(|X|^{k_0} \right) }{(x^{\varepsilon_0}/n)^{k_0} }  \nonumber\\
	&\lesssim_\beta  x^{-b} \lesssim \frac{e^{\lambda (k\delta)^b}}{\ell(k\delta)(k\delta)^a }.
\end{align}

\noindent
{\bf Estimate for$T_{43}''$.} Noticing that $T_{43}''=0$ when $k\delta>3x^{\varepsilon_0}$, so we only consider the case $k\delta \leq 3x^{\varepsilon_0}$.  In this case, we have
\begin{align}\label{upp-L-2}
	&\frac{ e^{\lambda x^b}}{\ell(x)x^a} T_{43}''  \lesssim_\beta  \frac{ e^{\lambda x^b}}{\ell(x)x^a}    \sup_{|z|\leq 4x^{\varepsilon_0}} \widetilde{\mathbb{P}}\left( X\in[x-z, x-z+1]\right) \nonumber\\
	&\lesssim \frac{ e^{\lambda x^b}}{\ell(x)x^a}   \sup_{|z|\leq 4x^{\varepsilon_0}} \ell(x-z) (x-z)^a e^{-\lambda (x-z)^b}\lesssim 1\nonumber\\
	& \lesssim \frac{e^{\lambda (k\delta)^b}}{\ell(k\delta)(k\delta)^a }.
\end{align}
Combining \eqref{decomposition-T-43}, \eqref{upp-L-1} and \eqref{upp-L-2}, we get \eqref{Goal-1'}.

\noindent
{\bf(Step 2)}. In this step, we prove  \eqref{Goal-2'}.   We have the following decomposition: 
\begin{align}\label{decomposition-T-43-QR}
	&T_{43} = \sum_{n< (\log x)^{5/2}}  \beta^{n}  \widetilde{\mathbb{P}}\left(\max_{1\leq j \leq n} S_j \leq x, x-S_n\in I,  \exists !\ 1\leq j\leq n,\  X_j > x-2x^{\varepsilon_0}, \max_{i\neq j} X_i \leq x^{\varepsilon_0}\right) \nonumber\\
	&\quad + \sum_{n< (\log x)^{5/2}}  \beta^{n}  \widetilde{\mathbb{P}}\left(\max_{1\leq j \leq n} S_j \leq x, x-S_n\in I,  \exists !\ 1\leq j\leq n,\  X_j > x-2x^{\varepsilon_0}, \max_{i\neq j} X_i > x^{\varepsilon_0}\right) \nonumber\\
	&=: R_{43}+Q_{43}. 
\end{align}

\noindent
{\bf Estimate for $Q_{43}$.}  By Lemma \ref{lem1}, 
\begin{align}\label{e35-1}
	& \frac{e^{\lambda x^b}}{\ell(x)x^a } Q_{43}  \leq \frac{e^{\lambda x^b}}{\ell(x)x^a }  \sum_{n< (\log x)^{5/2}} n \beta^{n}  \widetilde{\mathbb{P}}\left(   X_n > x-2x^{\varepsilon_0}, \max_{i\leq n-1} X_i > x^{\varepsilon_0}\right) \nonumber\\
	&\leq \frac{e^{\lambda x^b}}{\ell(x)x^a }  \sum_{n< (\log x)^{5/2}} n^2 \beta^{n}  \widetilde{\mathbb{P}}\left(   X > x-2x^{\varepsilon_0} \right) \widetilde{\mathbb{P}}\left(   X > x^{\varepsilon_0}\right) \nonumber\\
	&\lesssim_\beta  \frac{e^{\lambda x^b}}{\ell(x)x^a }  \widetilde{\mathbb{P}}\left(   X > x-2x^{\varepsilon_0} \right) \widetilde{\mathbb{P}}\left(   X > x^{\varepsilon_0}\right) \lesssim  x^{1-b} \ell(x^{\varepsilon_0}) (x^{\varepsilon_0})^{a+1-b}e^{-\lambda (x^{\varepsilon_0})^b} \nonumber\\
	&\stackrel{x\to+\infty}{\longrightarrow}0.
\end{align}

\noindent
{\bf Estimate for $R_{43}$}. We have 
\begin{align}\label{e36-2}
	& \lim_{x\to+\infty} \frac{e^{\lambda x^b}}{\ell(x)x^a } R_{43} \nonumber\\
	& = \lim_{x\to+\infty} \frac{e^{\lambda x^b}}{\ell(x)x^a }  \sum_{n<(\log x)^{5/2}}  \beta^{n} \sum_{j=1}^n \nonumber\\
	&\qquad \times \widetilde{\mathbb{P}}\left(\max_{1\leq k \leq n} S_k\leq x, x-S_n\in I,   X_j>x-2x^{\varepsilon_0},\max_{i\neq j} X_i \leq x^{\varepsilon_0} \right). 
\end{align}
According to a similar argument leading to \eqref{e30-1}, we obtain that 
\begin{align}
	&  \lim_{x\to+\infty} \frac{e^{\lambda x^b}}{\ell(x)x^a } R_{43} \nonumber\\
	& =\lim_{x\to+\infty}  \frac{e^{\lambda x^b}}{\ell(x)x^a }  \sum_{n<(\log x)^{5/2}}  \beta^{n} \sum_{j=1}^n \nonumber\\
	&\qquad \times \widetilde{\mathbb{P}}\left(\max_{1\leq k \leq n} S_k \leq x, x-S_n\in I,     |S_n-X_j| \leq x^{\varepsilon_0} ,  \max_{i\neq j} X_i\leq x^{\varepsilon_0} \right). 
\end{align}
Set $W=S_n-\max_{j\leq k\leq n}S_k$ and $Y=x- (S_n-X_j)$, then by the independence among $\{X_i, 1\leq i\leq n\}$,
\begin{align}
	& \widetilde{\mathbb{P}}\left(\max_{1\leq j \leq n} S_j \leq x, x-S_n\in I,     |S_n-X_j| \leq x^{\varepsilon_0} ,  \max_{i\neq j} X_i\leq x^{\varepsilon_0} \right)\nonumber\\
	&= \widetilde{\mathbb{P}}\left(X_j \leq W+Y, X_j\in (Y-q, Y-p],    |x-Y| \leq x^{\varepsilon_0} ,  \max_{i\neq j} X_i\leq x^{\varepsilon_0} \right)\nonumber\\
	&=\frac{1}{\mathbb{E}(e^{\gamma X})} \widetilde{\mathbb{E}}\left(1_{\{|x-Y| \leq  x^{\varepsilon_0} ,  \max_{i\neq j} X_i\leq x^{\varepsilon_0}, W>-q \}}  \int_{ -q }^{\min\{W, -p\}} \ell(z+Y)(z+Y)^a e^{-\lambda (z+Y)^b}\mathrm{d}z \right).
\end{align}
Since on the event $|x-Y|\leq x^{\varepsilon_0}$, for all $z\in [-q, -p]$, 
$\frac{1}{\ell(x)x^a e^{-\lambda x^b}}\ell(z+Y)(z+Y)^a e^{-\lambda (z+Y)^b}$ converges uniformly to $1$. Therefore, we conclude that 
\begin{align}\label{e35-2}
	&  \lim_{x\to+\infty} \frac{e^{\lambda x^b}}{\ell(x)x^a } R_{43} \nonumber\\
	& =\frac{1}{\mathbb{E}(e^{\gamma X})} \lim_{x\to+\infty} \sum_{n<(\log x)^{5/2}}  \beta^{n} \sum_{j=1}^n\widetilde{\mathbb{E}}\left(1_{\{ |x-Y| \leq  x^{\varepsilon_0} ,  \max_{i\neq j} X_i\leq 2x^{\varepsilon_0}\}}  \int_{ -q }^{-p} 1_{\{z<W\}}\mathrm{d}z \right)\nonumber\\
	& = \frac{1}{\mathbb{E}(e^{\gamma X})}    \int_{ -q }^{-p}\sum_{n=1}^\infty  \beta^{n} \sum_{j=1}^n\widetilde{\mathbb{P}}\left(   z<W\right) \mathrm{d}z .
\end{align}
Combining \eqref{decomposition-T-43-QR}, \eqref{e35-1} and \eqref{e35-2}, we complete the proof of \eqref{Goal-2'}. 

\hfill$\Box$

\bigskip
\noindent


	\begin{singlespace}
		
	\end{singlespace}

	\vskip 0.2truein
	\vskip 0.2truein

\noindent{\bf Haojie Hou:}  School of Statistics and Data Science, Nankai University, Tianjin 300071, P. R. China.  Email: {\texttt houhaojie@nankai.edu.cn}

\end{document}